\def\<{\langle}
\def\>{\rangle}
\newtheorem{thm}{Theorem}[section]
\newtheorem{lem}[thm]{Lemma}
\newtheorem{prop}[thm]{Proposition}
\newtheorem{ex}[thm]{Example}
\theoremstyle{definition}
\newtheorem{defn}{Definition}[section]
\theoremstyle{remark}
\newtheorem{re}{Remark}[section]
\begin{document}
	\title{\bf Maurer-Cartan characterization and cohomology of compatible LieDer and AssDer pairs }
	\author{\bf  Basdouri Imed, Ghribi Salima, Sadraoui Mohamed Amin, Satyendra Kumar Mishra}
	\author{{
			\ Basdouri Imed $^{1}$
			\footnote { Corresponding author, E-mail: basdourimed@yahoo. fr}, 
			\ Ghribi Salima $^{2}$
			\footnote { Corresponding author, E-mail: ghribisalima@yahoo. fr}, 
			\ Sadraoui Mohamed Amin $^{3}$
			\footnote { Corresponding author, E-mail: aminsadrawi@gmail.com}
		}\\
		\\
		{\small 1. University of Gafsa, Faculty of Sciences Gafsa, 2112 Gafsa, Tunisia}\\
		{\small 2. University of Sfax, Faculty of Sciences Sfax, BP
			1171, 3038 Sfax, Tunisia} \\
		{\small 3. University of Sfax, Faculty of Sciences Sfax, BP
			1171, 3038 Sfax, Tunisia} 
	}
	\date{}
	\maketitle
	\begin{abstract}
		A LieDer pair (respectively, an AssDer pair) is a Lie algebra equipped with a derivation (respectively, an associative algebra equipped with a derivation). A couple of LieDer pair structures on a vector space are called Compatible LieDer pairs (respectively, compatible AssDer pairs) if any linear combination of the underlying structure maps is still a LieDer pair (respectively, AssDer pair) structure. In this paper, we study compatible AssDer pairs, compatible LieDer pairs, and their cohomologies. We also discuss about other compatible structures such as compatible dendriform algebras with derivations, compatible zinbiel algebras with derivations, and compatible pre-LieDer pairs. We describe a relationship amongst these compatible structures using specific tools like Rota-Baxter operators, endomorphism operators, and the commutator bracket.
	\end{abstract}
	\textbf{Key words:}\ Lie algebras, Associative algebras, derivations, non-associative algebras, cohomology. \\

	\numberwithin{equation}{section}
	
	\tableofcontents
	\section{Introduction}
	The compatible structures are useful in various fields of Mathematics and mathematical Physics. In literature, there have been several studies on compatible algebraic structures, we refer to the articles \cite{bolsinov1992compatible,bolsinov2002compatible,reyman1988compatible,strohmayer2008operads} for compatible Lie algebras and \cite{liu2023maurer,odesskii2006algebraic,odesskii2008pairs,chtioui2021co} for compatible associative algebras.
	Compatible Lie algebras naturally correspond to compatible linear Poisson structures, which are related to bi-Hamiltonian structures. Furthermore, they are related to linear deformations of Lie algebras and classical Yang-Baxter equation \cite{golubchik2006compatible,nijenhuis1967deformations}. We also refer to some other interesting articles on the theory of compatible Lie algebras \cite{dotsenko2007character,panasyuk2014compatible,wu2015compatible}.
	
	\medskip
	
	\noindent The notion of compatible algebraic structures is not limited to Lie and associative algebras, the notion has been studied in the context of pre-Lie algebras \cite{liu2023deformations}, dendriform algebras \cite{liu2019compatible}, and Lie-Yamaguti algebras \cite{sania2023compatible}, to name a few. 
	
	\medskip
	
	\noindent In this article, we are interested in algebraic structures equipped with a derivation map and related compatible structures.
	A Lie algebra equipped with a derivation was studied in \cite{tang2019cohomologies} with the name `LieDer pair'. In the associative case, it has been introduced with the name `AssDer pair' in \cite{das2020extensions}. Here, we study compatible structures on LieDer pairs and AssDer pairs. We also discuss compatible structures in the case of Dendriform algebras, zinbiel algebras, and pre-Lie algebras equipped with a derivation. Finally, we explore relationship among these algebras. 
	
	\medskip
	
	\noindent A classical approach to study a mathematical object is to associate an invariant with the object. One such invariant is obtained by cohomology theories for different algebraic structures, for example see \cite{chevalley1948cohomology,gerstenhaber1963cohomology,harrison1962commutative,hochschild1945cohomology}. Deformation theory of both Lie and associative algebras are studied in terms of Chevalley-Eilenberg and Hochschild cohomology, we refer to \cite{nijenhuis1967deformations} and \cite{gerstenhaber1964deformation} for more details. 
	
	\medskip
	
	\noindent Recently, a cohomology theory for LieDer pairs and AssDer pairs was introduced in \cite{tang2019cohomologies, das2020extensions}. In the present work, we define a cohomology for compatible LieDer pairs as well as compatible AssDer pairs. 
	
	\medskip
	
	\noindent The section-wise description of the article is as follows: In Section \ref{preliminary}, we recall definitions related to associative and non-associative algebras. In Section \ref{section 3}, we define the compatible AssDer pairs and compatible LieDer pairs. We also define the compatible structures for DendriDer pairs, zinDer pairs, and pre-LieDer pairs. Section \ref{section 4} is devoted to the dendrification of various algebraic structures with derivation (Der pairs structures). In sequel, we explain the relationship between the above-mentioned compatible structures. In Section \ref{section 5} and \ref{section 6}, we introduce a cohomology for \textbf{compatible AssDer pairs} and \textbf{compatible LieDer pairs}, respectively. In this paper, all vector spaces are considered over a field $\mathbb{K}$ of characteristic $0$. All linear maps are $\mathbb{K}$-linear and tensor products are defined over $\mathbb{K}$ unless otherwise stated.

\section{Preliminary}\label{preliminary}
\noindent	In this section, we recall definitions related to associative and non-associative algebraic structures. 

\subsection{Associative algebra structures}	
\noindent We first recall the notion of associative algebras and related structures, which will be used throughout this article.
\begin{defn}
	An associative algebra $(A,\mu)$ is a vector space $A$ equipped with a linear map $\mu:A\otimes A \rightarrow A$ satisfying the following associativity condition 
	\begin{equation*}
		\mu(\mu(x,y),z)=\mu(x,\mu(y,z)), \quad \forall x,y,z \in A.
	\end{equation*}
\end{defn}

\paragraph*{Hochschild Cochain Complex and the Gerstenhaber Bracket:} 
In \cite{gerstenhaber1963cohomology}, M. Gerstenhaber constructed a graded Lie algebra structure on the graded space $C^{\ast}(A,A):=\oplus_{n\geq 0}C^{n}(A,A)$ with $C^{n}(A,A):=\mathrm{Hom}(A^{\otimes n}, A)$. He defined the graded Lie bracket (also called the Gerstenhaber bracket) as follows
\begin{equation*}
	[f,g]_G=f\circ g-(-1)^{pq} g \circ f,\quad \text{ with } f \in C^{p+1}(A,A) \text{ and } g\in C^{q+1}(A,A).
\end{equation*}
Where 
\begin{equation*}
	(f\circ g)(x_1,\ldots,x_{p+q+1})=\displaystyle\sum_{i=1}^{p+1}(-1)^{(i-1)q}f(x_1,\ldots,x_{i-1},g(x_i,\ldots,x_{i+p}),\ldots,x_{p+q+1}) \in C^{p+q+1}(A,A).
\end{equation*}
Recall that the previous bracket plays an important role in the construction of associative algebras as
mentioned in the following proposition.
\begin{prop}\cite{chtioui2021co}
	Let $A$ be a vector space and $\mu:A \otimes A \rightarrow A$ be a bilinear map on $A$. Then $\mu$ defines an associative algebraic structure on $A$ if and only if $[\mu,\mu]_G=0$.
\end{prop}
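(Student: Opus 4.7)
The plan is essentially a direct unpacking of the definition of the Gerstenhaber bracket in the special case $p=q=1$, followed by an identification of the resulting expression with the associator of $\mu$.

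First I would specialize the formula for $[f,g]_G$ to the case $f = g = \mu \in C^{2}(A,A)$, so that $p = q = 1$. Since $(-1)^{pq} = -1$, the bracket becomes
\begin{equation*}
  [\mu,\mu]_G \;=\; \mu\circ\mu - (-1)^{1}\,\mu\circ\mu \;=\; 2\,\mu\circ\mu.
\end{equation*}
Next I would compute $\mu\circ\mu$ on an arbitrary triple $(x_1,x_2,x_3) \in A^{\otimes 3}$ using the defining sum in the excerpt. The sum runs over $i = 1, 2$, with sign $(-1)^{(i-1)\cdot 1}$, giving
\begin{equation*}
  (\mu\circ\mu)(x_1,x_2,x_3) \;=\; \mu\bigl(\mu(x_1,x_2),x_3\bigr) \;-\; \mu\bigl(x_1,\mu(x_2,x_3)\bigr).
\end{equation*}

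Combining the two displays yields $[\mu,\mu]_G(x_1,x_2,x_3) = 2\bigl(\mu(\mu(x_1,x_2),x_3) - \mu(x_1,\mu(x_2,x_3))\bigr)$, i.e.\ twice the associator of $\mu$. From here the equivalence is immediate: if $\mu$ is associative the right-hand side vanishes identically, so $[\mu,\mu]_G = 0$; conversely, if $[\mu,\mu]_G = 0$, then since the characteristic of $\mathbb{K}$ is zero (as stated in the introduction) we may divide by $2$ and recover the associativity identity for all $x_1,x_2,x_3 \in A$.

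There is no real obstacle in this proof; the only point deserving care is the sign bookkeeping in the Gerstenhaber composition, in particular confirming that the two internal insertions of $\mu$ come with opposite signs so as to produce the standard form of the associator rather than a sum. Once the signs are tracked correctly, the statement reduces to a one-line computation.
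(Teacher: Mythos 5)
Your proof is correct and is exactly the standard verification: specializing the Gerstenhaber composition to $p=q=1$ gives $(\mu\circ\mu)(x_1,x_2,x_3)=\mu(\mu(x_1,x_2),x_3)-\mu(x_1,\mu(x_2,x_3))$, so $[\mu,\mu]_G=2\,\mu\circ\mu$ is twice the associator, and the characteristic-zero hypothesis lets you divide by $2$ for the converse. The paper itself only cites this result from \cite{chtioui2021co} without proof, and your computation is precisely the argument that citation relies on, so there is nothing to add.
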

So it follows from the previous results that if $(A,\mu)$ is an associative algebra, then the coboundary map $d^n:C^n(A,A)\rightarrow C^{n+1}(A,A)$ given by 
\begin{equation*}
	d^n(\varphi)=(-1)^{n-1}[\mu,f]_G,\quad \text{for } f \in C^n(A,A),
\end{equation*}
is just the cohomology of $A$ with coefficients in itself.

\begin{defn} \cite{chtioui2021co}
	A compatible associative algebra is a triple $(A,\mu_1,\mu_2)$ where $(A,\mu_1)$ and $(A,\mu_2)$ are two associative algebras such that 
	\begin{equation}\label{comp associative algebras}	 
		\mu_2(\mu_1(x,y),z)+\mu_1(\mu_2(x,y),z)=\mu_1(x,\mu_2(y,z))+\mu_2(x,\mu_1(y,z)), \quad\text{ for all } x,y,z \in A.
	\end{equation}
\end{defn}

\begin{ex}\cite{chtioui2021co}
	Let $(A,\mu)$ be an associative algebra. A Nijenhuis operator on $A$ is a linear map $N:A\rightarrow A$ such that
	\begin{equation*}
		\mu(N(x),N(y))=N(\mu(N(x),y)+\mu(x,N(y))-N(x,y)),\quad \forall x,y\in A.
	\end{equation*}
	A Nijenhuis operator induces a new associative structure on $A$ denoted by 
	\begin{equation*}
		\mu_N:A\otimes L:\rightarrow A;\quad (x,y)\mapsto \mu_N(x,y),
	\end{equation*}
	and it is defined as follow 
	\begin{equation*}
		\mu_N(x,y)=\mu(N(x),y)+\mu(x,N(y))-N(x,y),\quad \forall x,y\in A.
	\end{equation*}
	Then it is easy to see that $(A,\mu,\mu_N)$ is a compatible associative algebras.
\end{ex}

\noindent Let $\delta :A \rightarrow A$ be a linear map and $(A,\mu)$ be an associative algebra, if $\delta$ satisfy the following identity 
\begin{equation*}
	\delta \mu(x,y)=\mu(\delta x,y)+\mu(x,\delta y), \quad\text{ for all } x,y \in A.
\end{equation*}
Then $\delta$ is called a derivation.	Let us recall the defintion of AssDer pairs from \cite{das2020extensions}.

\begin{defn}
	An AssDer pair $(A,\mu,\delta)$ is an associative algebra $(A,\mu)$ equipped with a derivation $\delta$.
\end{defn}
Let's recall cohomology of AssDer pairs, 

\paragraph*{Cochain Complex of AssDer pairs:}
In \cite{das2020extensions}, the authors introduced a cochain complex associated to AssDer pairs. For an AssDer pair $(A,\delta)$, the cochain complex is defined as follows: Define 
\begin{itemize}
	\item $C^0_{\mathrm{AssDer}}(A,A):=0$;
	\item $C^1_{\mathrm{AssDer}}(A,A):=\mathrm{Hom}(A,A)$;
	\item $C^n_{\mathrm{AssDer}}(A,A)=C^n(A,A)\oplus C^{n-1}(A,A)$, $n\geq 2$,
\end{itemize}
and the coboundary map $d^n_{\mathrm{AssDer}}:C^n_{\mathrm{AssDer}}(A,A)\rightarrow C^{n-1}_{\mathrm{AssDer}}(A,A)$ is given by 
$$
\left\{
\begin{array}{ll}
	d^1_{\mathrm{AssDer}}f=(d^1f,-Df), & \text{ for } f\in C^1_{\mathrm{AssDer}}(A,A),\\
	d^n_{\mathrm{AssDer}}(f_n,g_{n-1})=(d^nf_n,d^{n-1}g_{n-1}+(-1)^nDf_n), & \text{  for } (f_n,g_{n-1})\in C^n_{\mathrm{AssDer}}(A,A),
\end{array}
\right.
$$
where \begin{eqnarray*}
	Df=\displaystyle\sum_{i=1}^n f\circ(id\otimes \ldots \otimes \underbrace{\delta}_{\text{i-th place}}\otimes\ldots\otimes id)-\delta\circ f, \quad \text{for } f\in C^n(A,A).
\end{eqnarray*}
It can be seen, in terms of Gerstengaber bracket, as follow
\begin{equation*}
	Df=-[\delta,f]_G.
\end{equation*}
\subsection{Lie algebra structures}

\noindent Here, we recall some definitions related to Lie algebras, compatible Lie algebras, and LieDer pairs.

\begin{defn}
	A Lie algebra $(L,[\cdot,\cdot])$ is a vector space equipped with a skew-symmetric linear map $[\cdot,\cdot]:L\otimes L \rightarrow L$ such that the Jacobi identity is satisfied
	\begin{center}
		$\circlearrowleft_{x,y,z}[[x,y],z]=0, \quad \forall x,y,z \in L.$
	\end{center}
\end{defn}

A compatible Lie algebra is a couple of Lie algebra structures on a vector space such that any linear combination of these two Lie brackets yields a Lie bracket. Let us recall the following definition from \cite{golubchik2006compatible,golubchik2002compatible,golubchik2005factorization,liu2023maurer}.
\begin{defn} 
	A compatible Lie algebra is a triple $(L,[\cdot,\cdot]_1,[\cdot,\cdot]_2)$ consisting of a vector space $L$, two Lie algebra brackets $[\cdot,\cdot]_1$ and $[\cdot,\cdot]_2$ on $L$ such that
	\begin{equation*}
		\circlearrowleft_{x,y,z}[[x,y]_1,z]_2+\circlearrowleft_{x,y,z}[[x,y]_2,z]_1=0, \quad \forall  x,y,z \in L.
	\end{equation*}
\end{defn}
\begin{ex}
	Consider a two $2$-dimensional Lie algebras $(L,[\cdot,\cdot]_1)$ and $(L,[\cdot,\cdot]_2)$ with basis $\{e_1,e_2\}$ defined by 
	\begin{equation*}
		[e_1,e_2]_1=e_1 \text{ and } [e_1,e_2]_2=e_2.
	\end{equation*}
	Then $(L,[\cdot,\cdot]_1,[\cdot,\cdot]_2)$ is a compatible Lie algebra.
\end{ex}

In \cite{tang2019cohomologies}, Lie algebra with a derivation was studied with several canonical examples. These structures are called the LieDer pair.
\begin{defn}
	A LieDer pair is a triple $(L,[\cdot,\cdot],\delta)$ where $(L,[\cdot,\cdot])$ is a Lie algebra equipped with a derivation $\delta$, i.e.,
	\begin{eqnarray*}
		\delta([x,y])=[\delta(x),y]+[x,\delta(y)], \quad \forall~ x,y \in L.
	\end{eqnarray*}
\end{defn}

\paragraph*{Cohomology of LieDer pairs :}
Recall that from \cite{tang2019cohomologies}, the set of LieDer pair $n$-cochains is defined as follow:
\begin{itemize}
	\item The set of $0$-cochains LieDer pair is $0$,
	\item the set of LieDer pair $1$-cochain is defined as $\mathcal{C}_{\mathrm{LieDer}}^{1}(L;L)$ equal to $\mathrm{Hom}(L,L)$,
	\item for $n\geq 2$, the set of LieDer pair $n$-cochains is defined by 
	
	And the coboundary map $\mathcal{C}_{LieDer}^{n}(L;L)=\mathcal{C}^{n}(L;L)\times \mathcal{C}^{n-1}(L;L)$ is given by 
	$$
	\left\{
	\begin{array}{ll}
		\partial^1_{\mathrm{LieDer}} (f)=(d_{\mathrm{L}}^1f,-Df)& \text{ for } f\in C_{\mathrm{LieDer}}^1(L;L),\\
		\partial_{\mathrm{LieDer}}(f_{n},g_{n-1})=(d_{\mathrm{L}}^nf_{n}, d_{\mathrm{L}}^{n-1}g_{n-1}+ (-1)^{n}Df_{n}),& \text{ for } (f_{n},g_{n-1})\in C_{\mathrm{LieDer}}^n(L;L).	
	\end{array}
	\right.
	$$
	Where $D$ is an operator defined by $D:\mathcal{C}^{n}(L;L)\longrightarrow\mathcal{C}^{n}(L;L),$
	\begin{equation*}
		Df_{n}=\sum\limits_{i=1}^{n} f_{n}\circ(1\otimes \dots \otimes \underbrace{\delta}_{\text{i-th place}}\otimes\dots\otimes 1)-\delta\circ f_{n}.
	\end{equation*}
	
	and $d_{\mathrm{L}}^{\star}$ is the coboundary operator of Chevalley-Eilenberg of a Lie algebra  $L$.
\end{itemize}

\paragraph*{Maurer-Cartan Characterization of LieDer pairs : }
Tang Rang, Yael Fregier, and Yunhe Sheng, in their work cited as \cite{tang2019cohomologies}, introduced the concept of the LieDer pair and established that its structure constitutes a \textbf{Maurer-Cartan element}. Consequently, in the subsequent sections of this study, we will revisit various findings and notational conventions, which will prove instrumental in our subsequent discussions.\\

Expanding on the findings mentioned earlier, the findings presented in reference \cite{liu2023maurer}, when dealing with a Lie algebra denoted as $(L,[\cdot,\cdot])$, along with its associated \textbf{Maurer-Cartan element} represented by '$w$' they proposed a reformulation of the Chevalley-Eilenberg coboundary in the following manner:

\begin{equation*}
	d_{\mathrm{L}}^{n}:\mathcal{C}^{n}(L,L)\longrightarrow \mathcal{C}^{n+1}(L,L).
\end{equation*}   
Defined by 
\begin{equation*}
	d_{\mathrm{L}}^{n}f=(-1)^{n-1}[w,f], \ \ \ \forall  f \in\mathcal{C}^{n}(L,L).
\end{equation*}
Consider a vector space $L$  and let be $\vartheta$ a graded vector space with the inclusion of the suspension operator $s:\vartheta \rightarrow \vartheta$ which is further elaborated upon in the work as referenced \cite{tang2019cohomologies}. Define the graded vector space $DC^{\star}(L;L):=C^{\star}(L;L) \oplus sC^{\star}(L;L)=\displaystyle \oplus_{n\geq 0}(\mathrm{Hom}(\wedge^{n+1}L,L)\times \mathrm{Hom}(\wedge^{n}L,L))$. \\
Define a skew-symmetric bracket operation \\
$$\{\cdot,\cdot\}:DC^m(L;L)\otimes DC^n(L;L) \rightarrow DC^{m+n}(L;L) $$ by  \\
$$\{(f_{m+1,g_m}),(f_{n+1,g_n})\}:=([f_{m+1},f_{n+1}]_{\mathrm{NR}},(-1)^{m}[f_{m+1},g_n]_{\mathrm{NR}}-(-1)^{n(m+1)}[f_{n+1},g_m]_{\mathrm{NR}})$$
where $[\cdot,\cdot]_{\mathrm{NR}}$ is the Nijenhuis-Richardson bracket. 
\begin{thm} \cite{tang2019cohomologies}
	Utilizing the notations provided earlier, it can be established that the pair $(DC^{\star}(L,L),\{\cdot,\cdot\})$ constitutes a graded Lie algebra. Additionally, the LieDer pair structure on $L$ precisely corresponds to the \textbf{ Maurer-Cartan element} within this algebra.
\end{thm}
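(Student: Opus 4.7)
The plan is to verify the graded Lie algebra axioms for $(DC^{\star}(L;L), \{\cdot,\cdot\})$ by reducing them to the known graded Lie algebra structure $(C^{\star}(L;L), [\cdot,\cdot]_{\mathrm{NR}})$, and then to unpack $\{\alpha,\alpha\}=0$ for $\alpha \in DC^1(L;L)$. The guiding observation is that the definition presents $DC^{\star}(L;L)$ as a shift-extension of $(C^{\star}(L;L), [\cdot,\cdot]_{\mathrm{NR}})$: on pure $f$-components the bracket is $[\cdot,\cdot]_{\mathrm{NR}}$; on a mixed pair of the form $(f_{m+1},0)$ and $(0,g_n)$ it is $[f_{m+1},g_n]_{\mathrm{NR}}$ placed into the shifted summand with a suitable sign; and on two $(0,g)$-components it vanishes.

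First, I would verify graded skew-symmetry, $\{\xi,\eta\}=-(-1)^{mn}\{\eta,\xi\}$ for $\xi \in DC^m$, $\eta \in DC^n$. The first component is graded skew-symmetry of $[\cdot,\cdot]_{\mathrm{NR}}$ at NR-degrees $m$ and $n$. The second component follows from graded skew-symmetry of $[\cdot,\cdot]_{\mathrm{NR}}$ applied between $f$ of NR-degree $m$ and $g$ of NR-degree $n-1$; the prefactor $(-1)^m$ on $[f_{m+1},g_n]_{\mathrm{NR}}$ and the prefactor $-(-1)^{n(m+1)}$ on $[f_{n+1},g_m]_{\mathrm{NR}}$ are arranged precisely so that swapping $\xi$ and $\eta$ produces the single global sign $-(-1)^{mn}$.

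Next, I would prove the graded Jacobi identity $\{\{\xi,\eta\},\zeta\} + (-1)^{m(n+p)}\{\{\eta,\zeta\},\xi\} + (-1)^{p(m+n)}\{\{\zeta,\xi\},\eta\} = 0$ for $\xi,\eta,\zeta$ of DC-degrees $m,n,p$. Splitting each bracket into its two components, the first-component identity is exactly the graded Jacobi identity of $[\cdot,\cdot]_{\mathrm{NR}}$ applied to the $f$-parts. The second-component identity collects six double NR-brackets involving two $f$'s and one $g$, which cancel in three pairs by the NR graded Jacobi identity after tracking the signs. Conceptually this is the standard semidirect product $V \ltimes V[-1]$ of a graded Lie algebra $V$ with itself acting via the adjoint representation; the main technical obstacle is precisely this sign bookkeeping, which I expect to be the most error-prone step.

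Finally, for the Maurer-Cartan characterization, let $\alpha=(\pi,\delta) \in DC^1(L;L)$, so $\pi \in \mathrm{Hom}(\wedge^2 L,L)$ and $\delta \in \mathrm{Hom}(L,L)$. Substituting $m=n=1$ into the bracket formula gives
\begin{equation*}
\{\alpha,\alpha\} = \bigl([\pi,\pi]_{\mathrm{NR}},\, -2[\pi,\delta]_{\mathrm{NR}}\bigr).
\end{equation*}
The identity $[\pi,\pi]_{\mathrm{NR}}=0$ is the classical reformulation of the Jacobi identity for $\pi$, so the first component vanishes if and only if $(L,\pi)$ is a Lie algebra. Expanding $[\pi,\delta]_{\mathrm{NR}}(x,y) = \pi(\delta x,y)+\pi(x,\delta y)-\delta(\pi(x,y))$, the second component vanishes if and only if $\delta$ is a derivation of $(L,\pi)$. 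Hence $\{\alpha,\alpha\}=0$ is equivalent to $(L,\pi,\delta)$ being a LieDer pair, concluding the proof.
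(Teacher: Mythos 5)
Your proposal is correct: the paper itself states this theorem as a recalled result from \cite{tang2019cohomologies} without reproducing a proof, and your argument — realizing $DC^{\star}(L;L)$ as the semidirect product of $(C^{\star}(L;L),[\cdot,\cdot]_{\mathrm{NR}})$ with its shifted adjoint module (an abelian ideal), then unpacking $\{(\pi,\delta),(\pi,\delta)\}=([\pi,\pi]_{\mathrm{NR}},-2[\pi,\delta]_{\mathrm{NR}})$ into the Jacobi identity and the derivation condition — is exactly the standard route taken in that reference. The sign checks you flag as the delicate step do work out with the convention that $(f_{m+1},g_m)$ has total degree $m$, and your formula for $\{\alpha,\alpha\}$ agrees with the one the paper itself records later in its remark on Maurer--Cartan elements.
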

\begin{re}
	The previous theorem introduce that $(w,\delta)$ is a \textbf{ Maurer-Cartan element} if and only if $(L,w,\delta)$ is a LieDer pair.
\end{re}
Furthermore, in the same source, the authors define: 
\begin{equation*}
	d_{(w,\delta)}:DC^n(L,L) \rightarrow DC^{n+1}(L,L)  
\end{equation*}
by 
\begin{equation*}
	d_{(w,\delta)}(f_{n+1},g_n):=\Big\{(w,\delta),(f_{n+1},g_n) \Big\}
\end{equation*}
is a graded derivation of the graded Lie algebra $(DC^{\star}(L,L),\{\cdot,\cdot\})$ such that $d_{(w,\delta)} \circ d_{(w,\delta)}=0$
\begin{lem} \cite{tang2019cohomologies}
	Let $(L,w,\delta)$ be a LieDer pair. Then $(DC^{\star}(L,L),\{\cdot,\cdot\},d_{(w,\delta)})$ is a differential graded Lie algebra.
\end{lem}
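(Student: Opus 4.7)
The plan is to show the three defining conditions of a differential graded Lie algebra for the triple $(DC^{\star}(L,L),\{\cdot,\cdot\},d_{(w,\delta)})$, namely (i) the graded Lie algebra structure, (ii) the graded derivation property of $d_{(w,\delta)}$, and (iii) the square-zero property $d_{(w,\delta)}\circ d_{(w,\delta)}=0$. Item (i) is immediate from the previous theorem, which asserts that $(DC^{\star}(L,L),\{\cdot,\cdot\})$ is a graded Lie algebra. So the work reduces to items (ii) and (iii), and both are consequences of the Maurer-Cartan equation $\{(w,\delta),(w,\delta)\}=0$ together with the graded Jacobi identity for $\{\cdot,\cdot\}$.

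For item (ii), I would unpack the definition $d_{(w,\delta)}(x)=\{(w,\delta),x\}$ and apply the graded Jacobi identity in $(DC^{\star}(L,L),\{\cdot,\cdot\})$ to the triple $((w,\delta),x,y)$ where $x\in DC^m(L,L)$ and $y\in DC^n(L,L)$. Since $(w,\delta)$ has total degree $1$ in the shifted grading of $DC^{\star}(L,L)$, rearranging the Jacobi identity yields
\begin{equation*}
\{(w,\delta),\{x,y\}\}=\{\{(w,\delta),x\},y\}+(-1)^{m}\{x,\{(w,\delta),y\}\},
\end{equation*}
which is exactly the graded derivation rule $d_{(w,\delta)}\{x,y\}=\{d_{(w,\delta)}x,y\}+(-1)^{m}\{x,d_{(w,\delta)}y\}$ with the correct sign convention for degree $1$.

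For item (iii), I would apply $d_{(w,\delta)}$ twice and use the graded Jacobi identity once more, together with the graded skew-symmetry, to obtain
\begin{equation*}
d_{(w,\delta)}\bigl(d_{(w,\delta)}(x)\bigr)=\{(w,\delta),\{(w,\delta),x\}\}=\tfrac{1}{2}\{\{(w,\delta),(w,\delta)\},x\}.
\end{equation*}
By the remark preceding the statement, the hypothesis that $(L,w,\delta)$ is a LieDer pair is equivalent to $(w,\delta)$ being a Maurer-Cartan element, i.e. $\{(w,\delta),(w,\delta)\}=0$, so the right-hand side vanishes and $d_{(w,\delta)}^{2}=0$.

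The main obstacle, if any, is purely bookkeeping: one must keep careful track of the shifted degree convention on $DC^{\star}(L,L)$ (the second component is $sC^{\star}$, shifted by $s$) so that the degree of $(w,\delta)$ is correctly identified as $1$ in the convention in which the Jacobi identity is stated, and so that the signs in (ii) match the standard sign for a degree $1$ graded derivation. Once the degree convention is fixed, items (ii) and (iii) are formal consequences of the graded Jacobi identity and the Maurer-Cartan condition, and no further computation with the Nijenhuis-Richardson bracket is required.
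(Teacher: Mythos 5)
Your proposal is correct and follows exactly the argument intended here: the paper states this lemma as a recalled result from \cite{tang2019cohomologies} without reproving it, and the surrounding text already asserts precisely the two facts you establish, namely that $d_{(w,\delta)}=\{(w,\delta),\cdot\}$ is a graded derivation and that $d_{(w,\delta)}\circ d_{(w,\delta)}=0$. Your derivation of both from the graded Jacobi identity and the Maurer--Cartan condition $\{(w,\delta),(w,\delta)\}=0$, with $(w,\delta)$ correctly placed in degree $1$ of $DC^{\star}(L,L)$, is the standard and complete argument.
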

Finally, we proceed to revisit the following theorem and proposition as the last results we recall:
\begin{thm}
	Let $(L,w,\delta)$ be a LieDer pair. Then for two linear maps $w_1:L\wedge L \rightarrow L$ and $\delta_1:L \rightarrow L$, $(w+w_1,\delta+\delta_1)$ is a LieDer pair if and only if $(w_1,\delta_1)$ is a \textbf{ Maurer-Cartan element} of the differential graded Lie algebra   $(DC^{\star}(L,L),\{\cdot,\cdot\},d_{(w,\delta)})$ which mean 
	\begin{equation*}
		d_{(w,\delta)}(w_1,\delta_1)+\frac{1}{2}\{(w_1,\delta_1),(w_1,\delta_1)\}=0
	\end{equation*}
\end{thm}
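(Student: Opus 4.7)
The plan is to reduce everything to the characterization already recalled in the excerpt: namely, that a pair $(\omega,\delta)$ defines a LieDer pair structure on $L$ precisely when it is a Maurer-Cartan element of the graded Lie algebra $(DC^{\star}(L,L),\{\cdot,\cdot\})$, i.e.\ when $\{(\omega,\delta),(\omega,\delta)\}=0$. Applied to $(w+w_1,\delta+\delta_1)$, the statement to be proved becomes a purely algebraic identity in this graded Lie algebra, and the task is essentially to expand $\{(w+w_1,\delta+\delta_1),(w+w_1,\delta+\delta_1)\}$ and reorganize the terms.

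First I would note that $(w,\delta)$ and $(w_1,\delta_1)$ both sit in $DC^{1}(L,L)$, since $w, w_1 \in \mathrm{Hom}(\wedge^{2}L,L)$ and $\delta,\delta_1 \in \mathrm{Hom}(L,L)$. For two elements of degree $1$ the graded Lie bracket is symmetric: $\{a,b\}=-(-1)^{|a||b|}\{b,a\}=\{b,a\}$. Using bilinearity and this symmetry, I would expand
\begin{equation*}
\{(w+w_1,\delta+\delta_1),(w+w_1,\delta+\delta_1)\}
=\{(w,\delta),(w,\delta)\}+2\{(w,\delta),(w_1,\delta_1)\}+\{(w_1,\delta_1),(w_1,\delta_1)\}.
\end{equation*}

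Next I would invoke the hypothesis that $(L,w,\delta)$ is a LieDer pair, which by the previous theorem gives $\{(w,\delta),(w,\delta)\}=0$. Therefore $(w+w_1,\delta+\delta_1)$ is a LieDer pair if and only if
\begin{equation*}
2\{(w,\delta),(w_1,\delta_1)\}+\{(w_1,\delta_1),(w_1,\delta_1)\}=0.
\end{equation*}
Dividing by $2$ and recognizing $\{(w,\delta),(w_1,\delta_1)\}=d_{(w,\delta)}(w_1,\delta_1)$ by the very definition of $d_{(w,\delta)}$, the above is equivalent to
\begin{equation*}
d_{(w,\delta)}(w_1,\delta_1)+\tfrac{1}{2}\{(w_1,\delta_1),(w_1,\delta_1)\}=0,
\end{equation*}
which is the Maurer-Cartan equation in the differential graded Lie algebra $(DC^{\star}(L,L),\{\cdot,\cdot\},d_{(w,\delta)})$.

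The argument is essentially formal once the preceding Maurer-Cartan characterization is in place; there is no genuine obstacle. The only point that needs mild attention is the sign bookkeeping in the graded skew-symmetry: one must verify that both copies $(w,\delta)$ and $(w_1,\delta_1)$ live in odd degree so that the cross-term collects with coefficient $2$ and not with a cancelling sign. Once this is confirmed, the equivalence follows immediately.
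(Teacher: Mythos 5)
Your proposal is correct, and it is the standard formal argument: expand $\{(w+w_1,\delta+\delta_1),(w+w_1,\delta+\delta_1)\}$ by bilinearity and the symmetry of the bracket on odd-degree elements, kill the $\{(w,\delta),(w,\delta)\}$ term using the hypothesis that $(L,w,\delta)$ is a LieDer pair, and identify the cross term with $d_{(w,\delta)}(w_1,\delta_1)$. The paper itself gives no proof of this theorem --- it is recalled from the cited reference on LieDer pairs --- and the argument given there is exactly this one, so there is nothing to contrast; your sign check that both elements lie in $DC^{1}(L,L)$ is the only point requiring care, and you handled it correctly.
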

\begin{prop}{\cite{tang2019cohomologies}}
	Let $(L,w,\delta)$ be a LieDer pair. Then we have for $(f_n,g_{n-1}) \in \mathrm{Hom}(\wedge^nL,L)\times \mathrm{Hom}(\wedge^{n-1}l,L)$
	\begin{equation*}
		\partial(f_n,g_{n-1})=(-1)^{n-1}d_{(w,\delta)}(f_n,g_{n-1}),
	\end{equation*}
	Where
	\begin{equation*}
		d_{(w,\delta)}(f_n,g_{n-1})=\{([w,f_n]_{NR},-[w,g_{n-1}]_{NR}-[f_n,\delta]_{NR})\}
	\end{equation*} 
\end{prop}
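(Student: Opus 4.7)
The plan is to verify the identity $\partial(f_n,g_{n-1})=(-1)^{n-1}d_{(w,\delta)}(f_n,g_{n-1})$ component by component, starting from the defining formula of the bracket $\{\cdot,\cdot\}$ and the reformulation $d_{\mathrm{L}}^{n}f=(-1)^{n-1}[w,f]_{\mathrm{NR}}$ recalled just above the proposition. I first fix the degree convention on $DC^{\star}(L;L)$: an element $(h_{k+1},l_k)$ has degree $k$, so $(w,\delta)$ is of degree $1$ and $(f_n,g_{n-1})$ is of degree $n-1$. Substituting into the formula for $\{\cdot,\cdot\}$ with $m=1$ and with ``$n$'' in that formula replaced by $n-1$, and observing that $(-1)^{(n-1)\cdot 2}=1$, I obtain
\begin{equation*}
d_{(w,\delta)}(f_n,g_{n-1})=\bigl([w,f_n]_{\mathrm{NR}},\;-[w,g_{n-1}]_{\mathrm{NR}}-[f_n,\delta]_{\mathrm{NR}}\bigr),
\end{equation*}
which is exactly the second displayed formula of the statement.

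Next I match the two components of $\partial(f_n,g_{n-1})=(d_{\mathrm{L}}^{n}f_n,\,d_{\mathrm{L}}^{n-1}g_{n-1}+(-1)^{n}Df_n)$ with $(-1)^{n-1}$ times the components just computed. The first is immediate from the reformulation: $d_{\mathrm{L}}^{n}f_n=(-1)^{n-1}[w,f_n]_{\mathrm{NR}}$. For the second, I substitute $d_{\mathrm{L}}^{n-1}g_{n-1}=(-1)^{n-2}[w,g_{n-1}]_{\mathrm{NR}}=(-1)^{n}[w,g_{n-1}]_{\mathrm{NR}}$, and then rewrite $Df_n=-[\delta,f_n]_{\mathrm{NR}}$ using the graded antisymmetry of $[\cdot,\cdot]_{\mathrm{NR}}$: since $\delta$ has degree $0$, one has $[\delta,f_n]_{\mathrm{NR}}=-[f_n,\delta]_{\mathrm{NR}}$, hence $Df_n=[f_n,\delta]_{\mathrm{NR}}$. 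Combining these, the second component becomes $(-1)^{n}\bigl([w,g_{n-1}]_{\mathrm{NR}}+[f_n,\delta]_{\mathrm{NR}}\bigr)$, which is precisely $(-1)^{n-1}$ times $-[w,g_{n-1}]_{\mathrm{NR}}-[f_n,\delta]_{\mathrm{NR}}$, closing the argument.

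The main obstacle is purely sign bookkeeping: being consistent about the degree of each element in $DC^{\star}(L;L)$ and applying graded anticommutativity of $[\cdot,\cdot]_{\mathrm{NR}}$ with the correct sign when converting between $[\delta,f_n]_{\mathrm{NR}}$ and $[f_n,\delta]_{\mathrm{NR}}$. The boundary case $n=1$, where $\partial^1 f=(d_{\mathrm{L}}^{1}f,-Df)$ and the $g_{n-1}$ slot is empty, fits into the general formula by taking $g_0=0$, so no separate argument is required.
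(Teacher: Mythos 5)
Your verification is correct: the degree bookkeeping ($(w,\delta)$ of degree $1$, $(f_n,g_{n-1})$ of degree $n-1$, so the sign $(-1)^{(n-1)(1+1)}=1$), the identification $Df_n=[f_n,\delta]_{\mathrm{NR}}$ via graded antisymmetry, and the component-by-component comparison with $\partial(f_n,g_{n-1})=(d_{\mathrm{L}}^{n}f_n,\,d_{\mathrm{L}}^{n-1}g_{n-1}+(-1)^{n}Df_n)$ all check out. The paper itself states this proposition as a recalled result from the cited reference without proof, and your computation is exactly the direct verification one would expect there.
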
	
\subsection{Some other non-associative algebra structures}
Within this subsection, we revisit several non-associative algebra structures, which encompass pre-Lie algebras, zinbiel algebras, and associated constructs. 

\begin{defn}\cite{burde2006left}
	A pre-Lie algebra is a pair $(L,\circ)$, where $L$ is a vector space equipped with a linear map $\circ:L\otimes L \rightarrow L$ satisfying
	\begin{equation*}
		(x\circ y)\circ z-x\circ(y\circ z)=(y\circ x)\circ z-y\circ(x\circ z), \quad \forall ~ x,y,z \in L.
	\end{equation*}
\end{defn}
We should remember that it is possible to construct a Lie algebra from a pre-Lie algebra in the following manner: Let $(L,\circ)$ be a pre-Lie algebra, then we consider the commutator
\begin{equation*}
	[x,y]_{\mathrm{c}}=x\circ y-y\circ x,\quad  \forall ~x,y \in L.
\end{equation*}
Consequently, the bracket denoted as $[\cdot,\cdot]_{\mathrm{c}}$ satisfies the Jacobi identity. This means that the pair $(L,[\cdot,\cdot]_{\mathrm{c}})$  forms a Lie algebra, referred to as the \textbf{sub-adjacent} Lie algebra of $(L,\circ)$. \\

\begin{defn}
	Let $(L,[\cdot,\cdot])$ be a Lie algebra over the field $\mathbb{K}$. A linear operator $R:L \rightarrow L$ is called a Rota-Baxter operator of weight $\lambda \in \mathbb{K}$ if $R$ satisfies the following relation
	\begin{equation*}
		[R(x),R(y)]+\lambda R([x,y])=R\Big([R(x),y]+[x,R(y)] \Big),\quad \forall x,y \in L.
	\end{equation*}
\end{defn}
Conversely, given a Rota-Baxter operator $R$ on a Lie algebra $(L,[\cdot,\cdot])$, we can construct a pre-Lie algebra product as follows
$$x \circ y=[R(x),y], \quad\forall ~x,y\in L.$$

For a deeper insight into these constructions,  we recommend consulting the following references: \cite{an2007rota,loday2001dialgebras,golubchik2000generalized}.

Recall that a linear map $\delta:L \rightarrow L$ is called a derivation on the pre-Lie algebra $(L,\circ)$ if it satisfies the following equation \begin{equation*}
	\delta (x \circ y)=\delta x \circ y+x \circ \delta y,\quad\forall x,y\in L. 
\end{equation*}

The triple $(L,\circ,\delta)$ is referred to as a 'pre-LieDer pair.' For further elaboration on this concept, refer to \cite{liu2023cohomologies}.

\begin{defn} \cite{liu2023deformations}
	Let $L$ be a vector space and let be $\circ_1$ and $\circ_2$ be two bilinear maps from $L \otimes L \rightarrow L$. Then the triple $(L,\circ_1,\circ_2)$ is said to be a compatible pre-Lie algebra if the following equation is satisfied 
	\begin{equation*}
		(x\circ_2y)\circ_1z+(x\circ_1y)\circ_2z-x\circ_1(y\circ_2z)-x\circ_2(y\circ_1z)-(y\circ_2x)\circ_1z-(y\circ_1x)\circ_2z+y\circ_1(x\circ_2z)+y\circ_2(x\circ_1z)=0,
	\end{equation*}
\end{defn}
for all $x,y,z\in L.$ 

Moving forward, we revisit the definition of zinbiel algebras, which are intricately linked to Leibniz algebras in the framework described by 'J-L. Loday' in his work \cite{loday1995cup}. 

\begin{defn}
	A zinbiel algebra is a pair $(L,\ast)$ consisting of a vector space $L$ equipped with a linear map $\ast:L\otimes L \rightarrow L$ such that 
	\begin{equation*}
		x \ast (y\ast z)=(x\ast y) \ast z+(y\ast x)\ast z, \quad \forall~ x,y,z \in L.
	\end{equation*}
\end{defn}

Furthermore, 'J.-L. Loday' introduced the concept of diassociative algebras in his work \cite{loday2001dialgebras}. It's essential to note that the Koszul dual of the operad associated with diassociative algebras corresponds to the operad of dendriform algebras. 

\begin{defn}
	A dendriform algebra is a triple $(L,\prec,\succ)$ where $L$ is a vector space, bilinear maps $\prec,\succ : L \otimes L \rightarrow L$ such that for all $x,y,z \in L$, we have the following identities 
	\begin{align*}
		(x\prec y) \prec z&=x\prec(y\prec z+y\succ z),\\
		(x\succ y)\prec z&=x\succ(y\prec z), \\
		x\succ(y\succ z)&=(x\prec y+x\succ y)\succ z.
	\end{align*}
\end{defn}

Dendriform algebras find applications across diverse mathematical domains, including algebraic topology and homological algebra. They necessitate dendrification, which is a process that establishes connections between different algebraic structures exhibiting specific properties. One such property involves the incorporation of Rota-Baxter operators or endomorphism operators, along with the commutator bracket, as mentioned in the preceding section. For a more comprehensive understanding, please refer to the following references: \cite{aguiar2000pre, loday2001dialgebras, loday2003algebres}.

\begin{defn} \label{compatible dendri}
	A compatible dendriform algebra is quintuple $(L,\prec_1,\succ_1,\prec_2,\succ_2)$ consisting of a vector space $L$ together with four bilinear operations $\prec_1,\succ_1,\prec_2,\succ_2:L \otimes L \rightarrow L$ where $(L,\prec_1,\succ_1)$ and $(L,\prec_2,\succ_2)$ are both two dendriform algebras such that for $x,y,z \in L$
	\begin{eqnarray*}
		(x \prec_1 y)\prec_2 z+(x\prec_2y)\prec_1z&=&x\prec_2(y\prec_1z+y\succ_1z)+x\prec_1(y\prec_2z+y\succ_2z), \\
		(x\succ_1y)\prec_2z+(x\succ_2y)\prec_1z&=&x\succ_2(y\prec_1z)+x\succ_1(y\prec_2z), \\
		(x\prec_1y+x\succ_1y)\succ_2z+(x\prec_2y+x\succ_2y)\succ_1z&=&x\succ_2(y\succ_1z)+x\succ_1(y\succ_2z).
	\end{eqnarray*}
\end{defn}

\begin{re} \label{rem2.1}
	Let be $(L,\prec,\succ)$ be a dendriform algebra, then an associative algebra rise by defining the structure $\mu: L\otimes L \longrightarrow L $ such that  $\mu(x,y)=x\succ y + x\prec y.$\\ 
	Also  a pre-Lie algebra rise by defining the  structure  $\circ : L\otimes L  \longrightarrow L$ such that  
	$ x\circ y=x \succ y - y\prec x $.\\    
\end{re}



The following diagram explains the relationship among the above-mentioned algebraic structures.

\begin{align*}
	\xymatrix{
		\text{Zinbiel algebra} \ar@{->}[rr]^{\small{x\ast y=x \prec y=x\succ y}~~~~} & &  \text{Dendriform algebras}  \ar@{->}[d]_{\small{\mu(x,y)=x\succ y+x\prec y}}\ar@{->}[rr]^{\small{x \circ y=x\succ y-y \prec y}} & &\text{Pre-Lie algebras}\ar@<-2pt>[d]_{~~\small{[x,y]_c=x \circ y-y \circ x}}\\
		& & \text{Associative algebra}\ar@{->}[rr]^{~~~~~~\small{[x,y]_c=\mu(x,y)-\mu(y,x)}} & & \text{Lie algebra}\ar@<-2pt>[u]_{\small{x\circ y=[R(x),y]}~~~~}.
	}
\end{align*}



\section{Compatible associative and nonassociative algebras with derivation} \label{section 3}
In this section, we extend the concept of compatible algebras to include compatible Der pair structures. This extension involves the combination of two Der pairs, which are two algebras equipped with two derivations, in order to create a new Der pair with identical structural properties. We start this exploration by delving into \textbf{compatible AssDer pairs}.
\subsection{Compatible AssDer pairs} \label{subsection 3.1}
Inspired by the research conducted by Odesskii Alexander and Vladimir Sokolov on the examination of compatible associative algebras as refereed in \cite{odesskii2006compatible, odesskii2006algebraic, odesskii2006integrable}, we introduce the concept of a "compatible AssDer pair." This constitutes a compatible associative algebra that is equipped with a compatible derivation.
\begin{defn} \label{compa AssDer pairs}
	A \textbf{compatible AssDer pair} is a quintuple  $(A,\mu_{1},\mu_{2},\delta_{1},\delta_{2})$ where  $L$ is a vector space, $\mu_{1}$ and $\mu_{2}$ are associative algebra structures on $A$ with two derivations $\delta_{1}$ and $\delta_{2}$ such that, for all $x,y,z \in A$: 
	\begin{eqnarray}
		(L,\mu_1,\mu_2) &&\text{ is a compatible associative algebra } \\
		\delta_{1}(\mu_{2}(x,y))+\delta_2(\mu_1(x,y))&=&\mu_2(\delta_1x,y)+\mu_2(x,\delta_1y)+\mu_1(\delta_2x,y)+\mu_1(x,\delta_2y). \label{definition CADR assertion 2}
	\end{eqnarray}
	
\end{defn}
\begin{ex}
	Let $(A,\mu_1)$ and $(A,\mu_2)$ be two associative algebras such that $f_1$ and $f_2$ are $1$-cocycles, respectively, on $(A,\mu_1)$ and $(A,\mu_2)$ with coefficient in $A$. Then $(A,\mu_{1},\mu_{2},f_1,f_2)$ is a \textbf{compatible AssDer pair}.
\end{ex}
\begin{re}
	In an alternative manner, the previous definition implies that the combined sums $\mu_1+\mu_2$ and $\delta_1+\delta_2$ establish an AssDer pair, where $\delta_1+\delta_2$ represents a derivation acting on the multiplication operation $\mu_1+\mu_2$. We can also express that the two derivations, $\delta_1$ and $\delta_2$ are compatible, denoted simply as $(\delta_1, \delta_2)$. Therefore, a "compatible AssDer pair" can be defined as a set of compatible associative algebras $(A, \mu_1, \mu_2)$ equipped with a compatible derivation $(\delta_1, \delta_2)$.
\end{re}
\begin{defn}
	Let $(A_1,\mu_1,\mu_2,\delta_1,\delta_2)$ and $(A_2,\mu_1^{\prime},\mu_2^{\prime},\delta_1^{\prime},\delta_2^{\prime})$ be two \textbf{compatible AssDer pairs}. Then the linear map $\varphi:A_1 \rightarrow A_2$ is called a homomorphism if it is a homomorphism of associative algebras from $(A_1,\mu_1)$ (respectively $(A_1,\mu_2)$) to $(A_2,\mu_1^{\prime})$ (respectively $(A_2,\mu_2^{\prime})$) and the following identities are satisfied
	\begin{equation} \label{morphism compatible AssDer pair}
		\varphi \circ \delta_1=\delta_1^{\prime} \circ \varphi,\quad \varphi \circ \delta_2=\delta_2^{\prime} \circ \varphi.
	\end{equation}
\end{defn}
\begin{defn} \label{RBO on compatible AssDer pair}
	The linear map $R:A \rightarrow A$ is a Rota-Baxter operator on the \textbf{compatible AssDer pair} $(A,\mu_1,\mu_2,\delta_1,\delta_2)$ if it is a Rota-Baxter operator on $(A,\mu_1)$ (respectively $(A,\mu_2)$) and the following identities are satisfied 
	\begin{equation} \label{RBO on CADP}
		R \circ \delta_1=\delta_1 \circ R,\quad R \circ \delta_2=\delta_2 \circ R.
	\end{equation}
\end{defn}
Recall that a Rota-Baxter operator gives rise to an associative algebra denoted as $(A, \mu_{\mathrm{R}})$. Further elaboration on this topic can be found in \cite{chtioui2021co}. In this context, the multiplication operation is defined as follows, considering elements $x$ and $y$ in $A$:
\begin{equation*}
	\mu_{\mathrm{R}}(x,y)=\mu(Rx,y)+\mu(x,Ry).
\end{equation*}
With $(A,\mu)$ is an associative algebra. \\
In the next proposition we extend that result to the case of \textbf{compatible AssDer Pair}.
\begin{prop}
	Let $(A,\mu_1,\mu_2,\delta_1,\delta_2)$ be a \textbf{compatible AssDer pair} and $R:A \rightarrow A$ be a Rota-Baxter operator on $A$. Then $(A,\mu_{\mathrm{R}}^1,\mu_{\mathrm{R}}^2,\delta_1,\delta_2)$ is a \textbf{compatible AssDer pair} where, for $x,y \in A$
	\begin{equation*}
		\mu_{\mathrm{R}}^i(x,y)=\mu_{\mathrm{i}}(Rx,y)+\mu_{\mathrm{i}}(x,Ry),\quad i\in \{1,2\}.
	\end{equation*}
\end{prop}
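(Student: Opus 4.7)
The plan is to verify the four defining conditions of a compatible AssDer pair for the quintuple $(A,\mu_R^1,\mu_R^2,\delta_1,\delta_2)$: (i) each $(A,\mu_R^i)$ is an associative algebra; (ii) $\mu_R^1$ and $\mu_R^2$ are compatible in the sense of \eqref{comp associative algebras}; (iii) each $\delta_i$ is a derivation of $\mu_R^i$; and (iv) the mixed derivation identity \eqref{definition CADR assertion 2} holds with the deformed multiplications.

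Conditions (i) and (iii) are essentially immediate. Condition (i) follows from the recalled fact that a Rota-Baxter operator on an associative algebra $(A,\mu)$ produces a new associative multiplication $\mu_R$, applied separately to $(A,\mu_1)$ and $(A,\mu_2)$, using that $R$ is assumed to be a Rota-Baxter operator on each. For (iii), I expand
\begin{equation*}
\delta_1\mu_R^1(x,y)=\delta_1\mu_1(Rx,y)+\delta_1\mu_1(x,Ry),
\end{equation*}
apply the derivation property of $\delta_1$ on $\mu_1$, and then use $R\circ\delta_1=\delta_1\circ R$ from \eqref{RBO on CADP} to replace $\delta_1 Rx$ by $R\delta_1 x$ and $\delta_1 Ry$ by $R\delta_1 y$. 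Regrouping yields $\mu_R^1(\delta_1 x,y)+\mu_R^1(x,\delta_1 y)$; the same argument, with subscripts $1$ and $2$ swapped, handles $\delta_2$ on $\mu_R^2$.

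Step (ii) is the core computation. Expanding $\mu_R^2(\mu_R^1(x,y),z)+\mu_R^1(\mu_R^2(x,y),z)$, I first invoke the (weight zero) Rota-Baxter identity for $\mu_i$ to collapse $R\mu_R^i(x,y)=\mu_i(Rx,Ry)$. This rewrites the left-hand side as a sum of six terms of the form $\mu_j(\mu_i(Ra,b),Rc)$ or $\mu_j(\mu_i(Ra,Rb),c)$ with $\{i,j\}=\{1,2\}$. Expanding the right-hand side and pushing $R$ inside analogously produces six terms of the form $\mu_j(Ra,\mu_i(Rb,c))$ or $\mu_j(a,\mu_i(Rb,Rc))$. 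The matching is then provided by three instances of the compatibility \eqref{comp associative algebras} of $\mu_1$ and $\mu_2$, applied to the triples $(Rx,Ry,z)$, $(Rx,y,Rz)$, and $(x,Ry,Rz)$; summing these three equalities transforms the left expansion exactly into the right expansion.

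Finally, for (iv) I apply the mixed derivation identity \eqref{definition CADR assertion 2} to the pairs $(Rx,y)$ and $(x,Ry)$ and add the two resulting equations. Using $R\delta_i=\delta_i R$, the eight right-hand terms regroup as
\begin{equation*}
\mu_R^2(\delta_1 x,y)+\mu_R^2(x,\delta_1 y)+\mu_R^1(\delta_2 x,y)+\mu_R^1(x,\delta_2 y),
\end{equation*}
while the left-hand terms collect precisely into $\delta_1\mu_R^2(x,y)+\delta_2\mu_R^1(x,y)$, giving \eqref{definition CADR assertion 2} for the deformed data. The main technical obstacle is step (ii), where one must carefully pair up the twelve expanded terms and identify the correct three instances of \eqref{comp associative algebras}; once that bookkeeping is fixed, no ingredient beyond the Rota-Baxter identity and the two given compatibilities is required.
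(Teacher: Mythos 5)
Your proposal is correct and follows essentially the same route as the paper's proof: collapse $R\mu_R^i$ via the Rota-Baxter identity, match the twelve expanded terms using the compatibility of $\mu_1$ and $\mu_2$ applied to $R$-decorated triples, and verify the derivation and mixed-derivation identities using $R\circ\delta_i=\delta_i\circ R$. Your explicit identification of the three instances of the compatibility equation at $(Rx,Ry,z)$, $(Rx,y,Rz)$, $(x,Ry,Rz)$ is exactly the bookkeeping the paper performs implicitly.
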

\begin{proof}
	For $x,y,z \in A$ we have 
	\begin{align*}
		\mu_{\mathrm{R}}^2(x,\mu_{\mathrm{R}}^1(y,z))&=\mu_2(Rx,\mu_{\mathrm{R}}^1(y,z))+\mu_2(x,R(\mu_{\mathrm{R}}^1(y,z))) \\
		&=\mu_2(Rx,\mu_1(Ry,z))+\mu_2(Rx,\mu_1(y,Rz))+\mu_2(x,R\big(\mu_1(Ry,z)+\mu_1(y,Rz) \big)) \\
		&=\mu_2(Rx,\mu_1(Ry,z))+\mu_2(Rx,\mu_1(y,Rz))+\mu_2(x,\mu_1(Ry,Rz)) \quad (\text{ because } R \text{ is a Rota-Baxter operator})
	\end{align*}
	So similarly and with simple calcul we obatin 
	\begin{eqnarray*}
		\mu_{\mathrm{R}}^2(x,\mu_{\mathrm{R}}^1(y,z))&=&\mu_2(Rx,\mu_1(Ry,z))+\mu_2(Rx,\mu_1(y,Rz))+\mu_2(x,\mu_1(Ry,Rz)) \\
		\mu_{\mathrm{R}}^1(x,\mu_{\mathrm{R}}^2(y,z))&=&\mu_1(Rx,\mu_2(Ry,z))+\mu_1(Rx,\mu_2(y,Rz))+\mu_1(x,\mu_2(Ry,Rz)) \\
		\mu_{\mathrm{R}}^2(\mu_{\mathrm{R}}^1(x,y),z)&=&\mu_2(\mu_1(Rx,Ry),z)+\mu_2(\mu_1(Rx,y),Rz)+\mu_2(\mu_1(x,Ry),Rz) \\
		\mu_{\mathrm{R}}^1(\mu_{\mathrm{R}}^2(x,y),z)&=&\mu_1(\mu_2(Rx,Ry),z)+\mu_1(\mu_2(Rx,y),Rz)+\mu_1(\mu_2(x,Ry),Rz)
	\end{eqnarray*}
	By using equation \eqref{comp associative algebras} we obatin that 
	\begin{equation*}
		\mu_{\mathrm{R}}^2(x,\mu_{\mathrm{R}}^1(y,z))+\mu_{\mathrm{R}}^1(x,\mu_{\mathrm{R}}^2(y,z))=\mu_{\mathrm{R}}^2(\mu_{\mathrm{R}}^1(x,y),z)+\mu_{\mathrm{R}}^1(\mu_{\mathrm{R}}^2(x,y),z).
	\end{equation*}
	Also we have 
	\begin{align*}
		\delta_1 \circ \mu_{\mathrm{R}}^1(x,y)&=\delta_1 (\mu_1(Rx,y)+\mu_1(x,Ry)) \\
		&=\mu_1(\delta_1\circ Rx,y)+\mu_1(Rx,\delta_1y)+\mu_1(\delta_1x,Ry)+\mu_1(x,\delta_1\circ Ry) \\
		&\overset{\eqref{RBO on CADP}}{=}\mu_1(R\circ \delta_1x,y)+\mu_1(\delta_1x,Ry)+\mu_1(Rx,\delta_1y)+\mu_1(x,R\circ \delta_1y) \\
		&=\mu_{\mathrm{R}}^1(\delta_1x,y)+\mu_{\mathrm{R}}^1(x,\delta_1y)
	\end{align*}
	which mean $\delta_1$ is a derivation on $(A,\mu_{\mathrm{R}}^1)$ then $(A,\mu_{\mathrm{R}}^1,\delta_1)$ is an AssDer pair. \\
	Similarly we prove that $(A,\mu_{\mathrm{R}}^2,\delta_2)$ is an AssDer pair. \\
	Finaly, by using the fact that $(A,\mu_1,\mu_2,\delta_1,\delta_2)$ we have 
	\begin{align*}
		\delta_1(\mu_R^2(x,y))+\delta_2(\mu_R^1(x,y))&=\delta_1(\mu_2(Rx,y)+\mu_2(x,Ry))+\delta_2(\mu_1(Rx,y)+\mu_1(x,Ry)) \\
		&\overset{\eqref{RBO on CADP}}{=}\mu_2(R\circ \delta_1x,y)+\mu_2(\delta_1x,Ry)+\mu_2(Rx,\delta_1y)+\mu_2(x,\delta_1\circ Ry) \\
		&+\mu_1(R\circ \delta_2x,y)+\mu_1(\delta_2x,Ry)+\mu_1(Rx,\delta_2y)+\mu_1(x,\delta_2\circ Ry) \\
		&=\mu_R^2(\delta_1x,y)+\mu_R^2(x,\delta_1y)+\mu_R^1(\delta_2x,y)+\mu_R^1(x,\delta_2y).
	\end{align*}
	This complete the proof.
\end{proof}
\subsection{Compatible LieDer pairs}\label{sec3}
In this subsection, we present the concept of \textbf{compatible LieDer pairs}, an extension of the LieDer pair concept, which is further elaborated upon in \cite{tang2019cohomologies}.

Consider two LieDer pairs, denoted as $(L, [\cdot,\cdot]_{1}, \delta_{1})$ and $(L, [\cdot,\cdot]_{2}, \delta_{2}).$

\begin{defn} \label{def2.1}
	A \textbf{compatible  LieDer pair} is quintuple  $(L,[ \cdot,\cdot]_{1},[\cdot,\cdot]_{2},\delta_{1},\delta_{2})$ where $L$ is a vector space, $[\cdot,\cdot]_{1}$ and $[\cdot,\cdot]_{2}$ are two Lie algebras structure on $L$ equipped respectively  with two derivations  $\delta_{1}$ and $\delta_{2}$ such that $\space \forall x,y,z\in L$ :
	\begin{eqnarray}
		\circlearrowleft_{x,y,z}[x,[y,z]_{1}]_{2}&+&\circlearrowleft_{x,y,z}[x,[y,z]_{2}]_{1}=0, \label{Def CLDP 1} \\
		\delta_{1}[x,y]_2+\delta_2[x,y]_1 &=& [\delta_1x,y]_2+[x,\delta_1y]_2+[\delta_2x,y]_1+[x,\delta_2]_1. \label{Def CLDP 2}
	\end{eqnarray}		
\end{defn}
\begin{ex}
	Let $(L,[\cdot,\cdot]_1)$ and $(L,[\cdot,\cdot]_2)$ be two Lie algebras such that $f_1$ and $f_2$ are $1$-cocycles, respectively, on $(L,[\cdot,\cdot]_1)$ and $(L,[\cdot,\cdot]_2)$ with respect to the adjoint representation. Then $(L,[\cdot,\cdot]_1,[\cdot,\cdot]_2,f_1,f_2)$ is a \textbf{compatible LieDer pair}.	
\end{ex}
\begin{re}
	The previous definition can be reformulated as follows: Suppose we have two "LieDer pairs," denoted as $(L,[\cdot,\cdot]_1,\delta_1)$ and $(L,[\cdot,\cdot]_2,\delta_2)$. Then, when we combine these two Der pairs into a single Der pair $(L,[\cdot,\cdot]_1,[\cdot,\cdot]_2,\delta_1,\delta_2)$, we obtain a \textbf{compatible LieDer pair}. This combination satisfies the two provided identities, \eqref{Def CLDP 1} and \eqref{Def CLDP 2} which signify the compatibility of the two brackets and derivations. Equivalently, this can be expressed as $(L,[\cdot,\cdot]_1+[\cdot,\cdot]_2,\delta_1+\delta_2)$ forming a LieDer pair.
\end{re}
\begin{prop}
	A  quintuple  $(L,[\cdot,\cdot]_{1},[\cdot,\cdot]_{2},\delta_{1},\delta_{2})$ is a \textbf{compatible LieDer pair} if and only if  $[\cdot,\cdot]_{1} $ and $[\cdot,\cdot]_{2}$ are two lie  algebra structures on $L$ such that for any $k_{1}, k_{2},p_{1},p_{2} \in \mathbb{K}$ and $x,y \in L$, the following structure:
	\begin{align*}
		[\![ x,y]\!]&=k_{1} [ x,y]_1+k_{2} [ x,y]_2, \\
		\delta &=p_{1}\delta_{1}+p_{2}\delta_{2}.
	\end{align*}
	defines a LieDer pair structure on $L$. 
\end{prop}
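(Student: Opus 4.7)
The plan is to verify the claimed equivalence by expanding the Lie-algebra axioms and the derivation identity for the generic combination $[\![x,y]\!] = k_1[x,y]_1 + k_2[x,y]_2$ and $\delta = p_1\delta_1 + p_2\delta_2$, and then reading off compatibility as the coefficients of the relevant scalar monomials.

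For the forward direction, assume $(L,[\cdot,\cdot]_1,[\cdot,\cdot]_2,\delta_1,\delta_2)$ is a compatible LieDer pair and fix arbitrary scalars $k_1,k_2,p_1,p_2 \in \mathbb{K}$. Skew-symmetry of $[\![\cdot,\cdot]\!]$ is inherited componentwise. Substituting into $\circlearrowleft_{x,y,z}[[\![x,y]\!],z]\!]$ produces a polynomial in $(k_1,k_2)$ of degree two: the pure $k_1^2$ and $k_2^2$ coefficients vanish by the Jacobi identities for $[\cdot,\cdot]_1$ and $[\cdot,\cdot]_2$ respectively, while the mixed $k_1 k_2$ coefficient is precisely the left-hand side of \eqref{Def CLDP 1} and is therefore zero. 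An analogous expansion of $\delta[\![x,y]\!] - [\![\delta x,y]\!] - [\![x,\delta y]\!]$ as a polynomial in $(p_i,k_j)$ shows that the ``diagonal'' $p_i k_i$ coefficients collapse by virtue of $\delta_i$ being a derivation of $[\cdot,\cdot]_i$, whereas the remaining off-diagonal $p_1 k_2$ and $p_2 k_1$ contributions are controlled by the compatibility identity \eqref{Def CLDP 2}.

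For the converse, assume the combination is a LieDer pair for every choice of scalars, and extract each required axiom by specialization. The choices $(k_1,k_2,p_1,p_2)=(1,0,1,0)$ and $(0,1,0,1)$ exhibit $(L,[\cdot,\cdot]_i,\delta_i)$ as LieDer pairs in their own right. The choice $(1,1,0,0)$ forces $[\cdot,\cdot]_1+[\cdot,\cdot]_2$ to satisfy the Jacobi identity; expanding it and subtracting the two individual Jacobi identities isolates exactly \eqref{Def CLDP 1}. Finally, $(1,1,1,1)$ asserts that $\delta_1+\delta_2$ is a derivation of $[\cdot,\cdot]_1+[\cdot,\cdot]_2$, and after cancelling the two diagonal derivation identities the remaining terms are precisely \eqref{Def CLDP 2}.

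The main ``obstacle'' is purely combinatorial bookkeeping: the bilinear expansion of the derivation identity has eight summands on each side, and one must organize them by scalar monomial and pair each with the correct axiom. No deeper idea is required; the argument is entirely a matching of polynomial coefficients in $(k_1,k_2,p_1,p_2)$, with the Jacobi part being a quadratic identity in $(k_1,k_2)$ and the derivation part a bilinear identity in $(p_i,k_j)$.
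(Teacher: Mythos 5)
Your overall strategy --- expand the Jacobiator and the derivation identity of the combined structure and match coefficients of the scalar monomials --- is exactly the strategy of the paper's proof, and your treatment of the bracket part is correct: the mixed $k_1k_2$ coefficient of the Jacobiator is precisely the left-hand side of \eqref{Def CLDP 1}, and your converse by specializing scalars is, if anything, cleaner than the paper's ``identification''.

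However, there is a genuine gap in your forward direction, at the sentence claiming that ``the remaining off-diagonal $p_1k_2$ and $p_2k_1$ contributions are controlled by the compatibility identity \eqref{Def CLDP 2}''. Those two contributions are
\begin{equation*}
p_1k_2\bigl(\delta_1[x,y]_2-[\delta_1x,y]_2-[x,\delta_1y]_2\bigr)+p_2k_1\bigl(\delta_2[x,y]_1-[\delta_2x,y]_1-[x,\delta_2y]_1\bigr),
\end{equation*}
and since $p_1k_2$ and $p_2k_1$ are independent monomials, vanishing for all scalars forces each parenthesis to vanish separately, whereas \eqref{Def CLDP 2} only asserts that their \emph{sum} vanishes. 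Concretely, take $L$ with basis $\{e_1,e_2\}$, $[e_1,e_2]_1=e_1$, $[e_1,e_2]_2=e_2$, $\delta_1(e_1)=e_1$, $\delta_1(e_2)=0$, $\delta_2(e_1)=e_2$, $\delta_2(e_2)=0$: this satisfies \eqref{Def CLDP 1} and \eqref{Def CLDP 2}, yet $\delta_1$ is not a derivation of $[\cdot,\cdot]_2$, so the choice $(k_1,k_2,p_1,p_2)=(0,1,1,0)$ does not yield a LieDer pair. The equivalence only holds if one couples the scalars (takes $p_i=k_i$), in which case the single cross monomial $k_1k_2$ carries exactly the sum appearing in \eqref{Def CLDP 2} and your argument closes. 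To be fair, the paper's own proof commits the same error --- it silently distributes $\delta_1$ over $[\cdot,\cdot]_2$ and $\delta_2$ over $[\cdot,\cdot]_1$ term by term --- so the defect lies in the proposition as stated rather than in anything specific to your write-up; but as written your forward direction does not go through.
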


\begin{proof}
	For the first sense, suppose that $(L,[\cdot,\cdot]_{1},[\cdot,\cdot]_{2},\delta_{1},\delta_{2})$ is a \textbf{compatible LieDer pair}, we need just to prove that $\delta$ is a derivation on $(L,[\![\cdot,\cdot]\!])$. \\
	Let be $x,y\in L$.
	\begin{eqnarray*}
		\delta([\![x,y]\!])&=&(p_{1}\delta_{1}+p_{2}\delta_{2}) \bigg(k_{1}[x,y]_{1}+k_{2}[x,y]_{2}\bigg)\\
		&=& p_{1}k_{1}\delta_{1}[x,y]_{1}+p_{2}k_{2}\delta_{2}[x,y]_{2}+p_{1}k_{2}\delta_{1}[x,y]_{2}+p_{2}k_{1}\delta_{2}[x,y]_{1}\\
		&=& p_{1}k_{1}[\delta_{1}x,y]_{1}+p_{1}k_{1}[x,\delta_{1}y]_{1}+p_{2}k_{2}[\delta_{2}x,y]_{2}+p_{2}k_{2}[x,\delta_{2}y]_{2}+p_{1}k_{2}[\delta_{1}x,y]_{2}\\
		&+& p_{1}k_{2}[x,\delta_{1}y]_{2}+p_{2}k_{1}[\delta_{2}x,y]_{1}
		+p_{2}k_{1}[x,\delta_{2}y]_{1}\\
		&=& k_{1}[p_{1}\delta_{1}x,y]_{1}+k_{1}[x,p_{1}\delta_{1}y]_{1}+k_{2}[p_{2}\delta_{2}x,y]_{2}+k_{2}[x,p_{2}\delta_{2}y]_{2}+k_{2}[p_{1}\delta_{1}x,y]_{2}\\
		&+& k_{2}[x,p_{1}\delta_{1}y]_{2}+k_{1}[p_{2}\delta_{2}x,y]_{1}
		+k_{1}[x,p_{2}\delta_{2}y]_{1}\\
		&=& k_{1}[p_{1}\delta_{1}x,y]_{1}+k_{2}[p_{2}\delta_{2}x,y]_{2}+k_{1}[x,p_{1}\delta_{1}y]_{1}+k_{2}[x,p_{2}\delta_{2}y]_{2}+k_{1}[p_{2}\delta_{2}x,y]_{1}\\&+&  k_{2}[p_{1}\delta_{1}x,y]_{2}
		+k_{1}[x,p_{2}\delta_{2}y]_{1}+ k_{2}[x,p_{1}\delta_{1}y]_{2}\\
		&=&[\![p_{1}\delta_{1}x,y]\!]+[\![p_{2}\delta_{2}x,y]\!]+[\![x,p_{1}\delta_{1}y]\!]+[\![x,p_{2}\delta_{2}y]\!]\\
		&=& [\![(p_{1}\delta_{1}+p_{2}\delta_{2})x,y]]+[\![x,(p_{1}\delta_{1}+p_{2}\delta_{2})y]\!]\\
		&=& [\![\delta x,y]\!]+[\![x,\delta y ]\!].
	\end{eqnarray*}
	
	Which mean $(L,[\![\cdot,\cdot]\!],\delta)$ is a LieDer pair.\\
	For  the opposite sense, we need just to prove that,
	$$\delta_{1}[x,y]_2+\delta_2[x,y]_1 = [\delta_1x,y]_2+[x,\delta_1y]_2+[\delta_2x,y]_1+[x,\delta_2]_1$$
	In the first hand we have:
	$$\delta[\![x,y]\!]=p_{1}k_{1}\delta_{1}[x,y]_{1}+p_{2}k_{2}\delta_{2}[x,y]_{2}+p_{1}k_{2}\delta_{1}[x,y]_{2}+p_{2}k_{1}\delta_{2}[x,y]_{1}.$$
	And in the second hond we have 
	$$[\![\delta x,y]\!]+[\![x,\delta y]\!]= p_{1}k_{1}\delta_{1}[x,y]_{1}+p_{2}k_{2}\delta_{2}[x,y]_{2}+p_{1}k_{2}\left( [\delta_{1} x,y]_{2}+[x,\delta_{1}y]_{1}\right) +p_{2}k_{1}\left( [\delta_{2} x,y]_{1}+[x,\delta_{2}y]_{1}\right).$$
	
	By identification we obtain the result.

\end{proof}

\begin{defn}
	A homomorphism between two \textbf{compatible LieDer pairs} $(L_1, [\cdot,\cdot]_1,[\cdot,\cdot]_2,\delta_1,\delta_2)$ and $(L_2, [\cdot,\cdot]_1^{\prime},[\cdot,\cdot]_2^{\prime},\delta_1^{\prime},\delta_2^{\prime})$ is a linear map $\varphi :L_1\longrightarrow L_2$ such that $\varphi$ is a LieDer pair homomorphism between $(L_1, [\cdot,\cdot]_1,\delta_1)$ (respectively $(L_1, [\cdot,\cdot]_2,\delta_2)$) and $(L_2,[\cdot,\cdot]_1^{\prime},\delta_1^{\prime})$ (respectively $(L_2, [\cdot,\cdot]_2^{\prime},\delta_2^{\prime})$).
\end{defn}
\begin{re}
	Recall that a LieDer pair homomorphism means that, for $i \in \{1,2\}$:
	\begin{eqnarray*}
		\varphi([x,y]_{\mathrm{i}})&=&[\varphi(x),\varphi(y)]_{\mathrm{i}}^{\prime}, \\
		\varphi \circ \delta_{\mathrm{i}}&=&\delta_{\mathrm{i}}^{\prime} \circ \varphi.
	\end{eqnarray*}
\end{re}
\begin{defn} 
	The linear map $R:L \rightarrow L$ is a Rota-Baxter operator on the \textbf{compatible LieDer pair} $(L, [\cdot,\cdot]_1,[\cdot,\cdot]_2,\delta_1,\delta_2)$ if it is a Rota-Baxter operator on $(L,[\cdot,\cdot]_1)$ (respectively $(L,[\cdot,\cdot]_2)$ ) and the following identities are satisfied 
	\begin{equation*} 
		R \circ \delta_1=\delta_1 \circ R,\quad R \circ \delta_2=\delta_2 \circ R.
	\end{equation*}
\end{defn}
\subsection{Compatible DenriDer and zinDer pairs }\label{subsection 3.3}
In this subsection, we expand upon the concept of compatible dendriform algebras to encompass compatible Dendriform algebras with derivations, referred to as \textbf{compatible DendriDer pairs}. However, before proceeding with this, we shall first define the concept of a dendriDer pair. Subsequently, in the second step, we will establish the definitions of compatible zinbiel algebras, zinDer pairs (zinbiel algebras with derivations), and \textbf{compatible zinDer pairs}.
\paragraph*{Compatible DendriDer pairs} 
In this subsection we define the DendriDer pairs then we extend this notion to the case of \textbf{compatible dendriDer pairs}.
\begin{defn} \label{def4.1}
	A dendriDer pair is a quadruple $(L,\prec,\succ,\delta)$ where $(L,\prec,\succ)$ is a dendriform algebra and $\delta:L \longrightarrow L $ is a derivation on $L$ .i.e, for all $x,y\in L$:\\
	$$  \delta(x\prec y )=\delta x \prec y + x\prec \delta y,$$
	$$\delta(x\succ y )=\delta x \succ y + x\succ\delta y.$$
\end{defn}
Because Rota-Baxter operators are connected with dendriform algebras in an active area of research we recall its definition as follow
\begin{defn}
	Let $(L,\prec,\succ)$ be a dendriform algebra and $R:L \rightarrow L$ be a linear map. Then $R$ is said to be a Rota-Baxter operator on $(L,\prec,\succ)$ if is satisfy for $x,y \in L$
	\begin{eqnarray*}
		Rx\prec Ry&=&R(Rx\prec y+x\prec Ry), \\
		Rx\succ Ry&=&R(Rx\succ y+x\succ Ry). 
	\end{eqnarray*}
\end{defn}	
Here we cite an example of dendriDer pair.
\begin{ex}
	Let $R$ be an invertible Rota-Baxter operator on $(L,\prec,\succ)$. Then $(L,\prec,\succ,R^{-1})$ is a dendriDer pair.
\end{ex}
\begin{defn}
	A  homomorphism between two  dendriDer pairs $(L_1,\prec_{1},\succ_{1},\delta_{1})$ and $(L_2,\prec_2,\succ_{2},\delta_{2})$ is a linear map $\varphi: L_1\longrightarrow L_2 $ such that $\varphi $ is homomorphism between $(L_1,\prec_{1},\succ_{1})$ and $(L_2,\prec_{2},\succ_{2})$ and the following condition hold: 
	\begin{equation*}
		\varphi \circ \delta_{1}=\delta_{2}\circ \varphi.
	\end{equation*}
\end{defn}
\begin{defn}\label{comp DendriDer pair}
	A \textbf{compatible dendriDer pair} is a $7$-uple $(L,\prec_{1},\succ_{1},\prec_{2},\succ_{2},\delta_{1},\delta_{2})$ consisting of vector space $L$ equipped with four bilinear maps $\prec_{1},\succ_{1},\prec_{2},\succ_{2}:L\otimes L \longrightarrow L$ and two derivations $\delta_1,\delta_2$ such that $(L,\prec_{1},\succ_{1},\delta_{1})$ and $(L,\prec_{2},\succ_{2},\delta_{2})$ are two dendriDer pairs. Where the following conditions are satisfied:
	\begin{eqnarray}
		(L,\prec_{1},\succ_{1},\prec_{2},\succ_{2})&& \text{ is a compatible dendriform algebra (see definition } \eqref{compatible dendri}) , \\
		\delta_{1}(x\prec_{2}y)+\delta_{2}(x\prec_{1}y)&=&\delta_{1}x\prec_{2}y+x\prec_{2}\delta_{1}y+\delta_{2}x\prec_{1}y+x\prec_{1}\delta_{2}y, \label{comp dendrDer 2}\\
		\delta_{1}(x\succ_{2}y)+\delta_{2}(x\succ_{1}y)&=&\delta_{1}x\succ_{2}y+x\succ_{2}\delta_{1}y+\delta_{2}x\succ_{1}y+x\succ_{1}\delta_{2}y.\label{comp dendrDer 3}
	\end{eqnarray}
\end{defn}
\begin{re}
	The previous definition can be seen as $(L,\prec_1+\prec_2,\succ_1+\succ_2,\delta_1+\delta_2)$ is a dendriDer pair which mean that the two following dendriDer pairs $(L,\prec_1,\succ_1,\delta_1)$ and $(L,\prec_2,\succ_2,\delta_2)$ are compatible.
\end{re}
\paragraph*{Compatible zinDer pairs}
In this subsection, we begin by introducing zinDer pairs, which are zinbiel algebras with derivations. Following that, we delve into the concept of compatible zinbiel algebras. Finally, we proceed to generalize these ideas to define \textbf{compatible zinDer pairs}.

To begin, let's first revisit the definition of a derivation on a zinbiel algebra.
\begin{defn} \cite{almutairi2018derivations}
	Let $(L,\ast)$ be a zinbiel algebra, a derivation on $(L,\ast)$ is a linear map $\delta:L \rightarrow L$ satisfying, for $x,y \in L$
	\begin{equation*} 
		\delta(x \ast y)=\delta x\ast y+x \ast \delta y.
	\end{equation*}
\end{defn}
\begin{defn} \label{ZinDer pairs definition}
	A zinDer pair is a triple $(L,\ast,\delta)$ consisting of a zinbiel algebra $(L,\ast)$ and a derivation $\delta$.
\end{defn}
\begin{ex}
	Let $L$ be a two dimensional zinbiel algebra with composition law $e_1\ast e_1=e_2$ on the basis $\{e_1,e_2\}$. Then $(L,\ast,\delta)$ is a zinDer pair with, $\alpha,\beta \in \mathbb{K}$, 
	$$ \begin{pmatrix}
		\alpha & 0 \\
		\beta & 2\alpha 
	\end{pmatrix}
	$$
	For more details see \cite{almutairi2018derivations}.
\end{ex}
\begin{defn}
	Let $(L_1,\ast_1,\delta_1)$ and $(L_2,\ast_2,\delta_2)$ be two zinDer pairs. A zinDer pair homomorphism $\varphi:L \rightarrow L$ is a linear map satisfying, for $x,y \in L$
	\begin{eqnarray*}
		\varphi(x\ast_1y)&=&\varphi(x) \ast_2 \varphi(y), \\
		\varphi \circ \delta_1&=&\delta_2 \circ \varphi.
	\end{eqnarray*}  
\end{defn}
Now, we proceed to the introduction of the concept of compatible zinbiel algebras, which entails the combination of two zinbiel algebras, denoted as $(L,\ast_1)$ and $(L,\ast_2),$ to form a new algebra within the same algebraic structure.
\begin{defn} \label{compatible zinbiel algebras}
	A compatible zinbiel algebra is a triple $(L,\ast_1,\ast_2)$ consisting of a vector space endowed with two bilinear maps $\ast_1,\ast_2:L \rightarrow L$ such that $(L,\ast_1)$ and $(L,\ast_2)$ are both zinbiel algebras such that for $x,y,z \in L$
	\begin{equation}\label{comp zinbiel}
		x\ast_1(y\ast_2z)+x\ast_2(y\ast_1z)=(x\ast_2y)\ast_1z+(x\ast_1y)\ast_2z+(y\ast_2x)\ast_1z+(y\ast_1x)\ast_2z.
	\end{equation}
\end{defn}
Using the previous result in this subsection we introduce the \textbf{compatible zinDer pairs}.
\begin{defn} \label{compatible zinDer pairs def}
	A \textbf{compatible zinDer pair} is quintuple $(L,\ast_1,\ast_2,\delta_1,\delta_2)$ consisting of a vector space $L$ equipped with two bilinear maps $\ast_1,\ast_2:L\otimes L \rightarrow L$ and a two derivations $\delta_1,\delta_2$ such that $(L,\ast_1,\delta_1)$ and $(L,\ast_2,\delta_2)$ are two zinDer pairs where the following conditions holds, for $x,y \in L$
	\begin{eqnarray}
		(L,\ast_1,\ast_2) &&\text{ is a compatible zinbiel algebra (see } \eqref{comp zinbiel})  ,\\
		\delta_1(x\ast_2y)+\delta_2(x\ast_1y)&=&\delta_1x\ast_2y+x\ast_2\delta_1y+\delta_2x\ast_1y+x\ast_1\delta_2y. \label{compatibility derivation zinbiel}
	\end{eqnarray}
\end{defn}
We summariz our results of the last subsection in the following diagramm:
\begin{align*}
	\xymatrix{
		\text{Zinbiel algebras } (L,\ast_1) \text{ and } (L,\ast_2)  \ar@{->}[d]_{\small{\delta_1,\delta_2}~~~~}\ar@{->}[drr]^{\small{\eqref{comp zinbiel}}} \\
		\text{zinDer pairs }(L,\ast_1,\delta_1), (L,\ast_2,\delta_2)\ar@{->}[drr] & &  \text{compatible zinbiel algebras }(L,\ast_1,\ast_2)\ar@{->}[d] \\
		& &  \text{zinDer pairs and compatible zinbiel algebras} \ar@{->}[d]^{\small{\eqref{comp zinbiel}}+\eqref{compatibility derivation zinbiel}}  \\
		&& \textbf{compatible zinDer pairs }(L,\ast_1,\ast_2,\delta_1,\delta_2)
	}
\end{align*}

\section{Dendrification of  some compatible algebras with derivation} \label{section 4}
This section is devoted to examining the interconnections between various algebraic structures, a concept called "dendrification." This concept is distinguished by several properties, which we will outline in the following.
\subsection{Dendrification of some algebras with derivation} \label{subsection 4.1}
In this subsection, we investigate the relationships between distinct algebras equipped with derivations, often referred to as Der pair structures.
\begin{prop}\label{prop dendriDer to assDer}
	Let be $(L,\prec,\succ,\delta)$ be a dendriDer pair. Define the bilineair map 
	\begin{equation}
		\mu :  L\otimes L \longrightarrow L   \text{   by  } \ \   \mu (x,y)=x\succ y +x\prec y ,\ \ \  \forall  x,y\in L.
	\end{equation}
	\\  Then $(L, \mu, \delta )$ is an AssDer pair.
\end{prop}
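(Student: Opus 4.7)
The plan is to verify the two defining properties of an AssDer pair separately: that $(L,\mu)$ is associative, and that $\delta$ is a derivation with respect to $\mu$.

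First I would establish associativity of $\mu$. The strategy is to expand $\mu(\mu(x,y),z)$ and $\mu(x,\mu(y,z))$ using the definition $\mu = \succ + \prec$, producing four terms on each side:
\begin{align*}
\mu(\mu(x,y),z) &= (x \succ y)\succ z + (x\prec y)\succ z + (x\succ y)\prec z + (x\prec y)\prec z, \\
\mu(x,\mu(y,z)) &= x\succ(y\succ z) + x\succ(y\prec z) + x\prec(y\succ z) + x\prec(y\prec z).
\end{align*}
Then I would match terms by invoking the three dendriform axioms one at a time: the third axiom rewrites $(x\succ y)\succ z + (x\prec y)\succ z$ as $x \succ(y\succ z)$; the first axiom rewrites $x\prec(y\succ z) + x\prec(y\prec z)$ as $(x\prec y)\prec z$; the middle axiom handles the cross term $(x\succ y)\prec z = x\succ(y\prec z)$. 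Combining these three substitutions shows $\mu(\mu(x,y),z) = \mu(x,\mu(y,z))$.

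Next I would verify the derivation property $\delta(\mu(x,y)) = \mu(\delta x, y) + \mu(x, \delta y)$. This is essentially immediate: by Definition \ref{def4.1}, $\delta$ is a derivation with respect to both $\prec$ and $\succ$, so
\begin{equation*}
\delta(\mu(x,y)) = \delta(x\succ y) + \delta(x\prec y) = (\delta x \succ y + x \succ \delta y) + (\delta x \prec y + x \prec \delta y),
\end{equation*}
which rearranges to $\mu(\delta x, y) + \mu(x,\delta y)$.

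I do not anticipate a genuine obstacle here; the proof is entirely mechanical. The only mild bookkeeping subtlety lies in the associativity check, where one must apply the three dendriform axioms in just the right grouping to match the eight terms on the two sides. Everything else is a direct additivity argument.
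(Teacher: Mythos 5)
Your proof is correct and the derivation check is essentially identical to the paper's; the only difference is that the paper delegates the associativity of $\mu$ to a citation of Aguiar's work \cite{aguiar2000pre}, whereas you verify it directly from the three dendriform axioms (and your term-matching is accurate). This makes your argument slightly more self-contained but not a genuinely different approach.
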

\begin{proof}
	In \cite{aguiar2000pre}, the previous proposition holds in the case of algebras. So we need just to prove that $\delta $ is a derivation on $(L, \mu)$.\\
	Let be $x,y\in  L$
	\begin{eqnarray*}
		\delta(\mu(x,y))&=& \delta (x\succ y + x\prec y )\\
		&=& \delta (x\succ y) +\delta( x\prec y )\\
		&=& \delta x \succ y + x \succ \delta y  + \delta x \prec y + x\prec \delta y  \\
		&=&  ( \delta x \succ y  + \delta x \prec y ) + (x \succ \delta y + x\prec \delta y )\\
		&=& \mu (\delta x , y) + \mu(x,\delta y ).
	\end{eqnarray*}
	This complete the proof.
\end{proof}
\begin{center}
	$\xymatrix{
		\emph{dendriDer pair}&\underrightarrow{ \mu(x,y)=x\prec y+x \succ y} &\emph{Ass Der pair}
	}$
\end{center}
\begin{prop}
	Let be  $( L, \prec, \succ , \delta )$ be a  dendriDer pair. Define 
	$$\circ : L\otimes L \longrightarrow L  \text{ by } x\circ y = x\succ y - y\prec x ; \ \ \ \forall x,y\in L,$$  then $(L , \circ , \delta )$ is a pre-LieDer pair.
\end{prop}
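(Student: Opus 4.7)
The plan is to mirror the strategy of Proposition~\ref{prop dendriDer to assDer}. The statement splits into two independent claims: (i) $(L,\circ)$ is a pre-Lie algebra, and (ii) $\delta$ is a derivation for $\circ$. Claim (i) is the classical result of Aguiar (cited in the paper as \cite{aguiar2000pre} and recorded in Remark~\ref{rem2.1}), which says that for any dendriform algebra the operation $x\circ y = x\succ y - y\prec x$ satisfies the left pre-Lie identity; the verification is a direct expansion of $(x\circ y)\circ z - x\circ(y\circ z)$ using all three dendriform axioms, and I would simply cite it rather than redo it. So the only genuine work left is (ii), the compatibility of $\delta$ with the new product.

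For (ii), I would compute $\delta(x\circ y)$ directly. By $\mathbb{K}$-linearity of $\delta$,
\[
\delta(x\circ y) = \delta(x\succ y) - \delta(y\prec x),
\]
and then apply the two derivation identities from Definition~\ref{def4.1} to each piece. This gives
\[
\delta(x\circ y) = (\delta x\succ y + x\succ \delta y) - (\delta y\prec x + y\prec \delta x).
\]
Regrouping the four terms as $(\delta x\succ y - y\prec \delta x) + (x\succ \delta y - \delta y\prec x)$, both parentheses are instances of $\circ$ by definition, yielding $\delta x\circ y + x\circ \delta y$, which is exactly the Leibniz rule for $\circ$.

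There is no real obstacle here; the argument is a routine bookkeeping exercise completely parallel to the AssDer case already done in Proposition~\ref{prop dendriDer to assDer}. The only small point of care is the order-reversal in the second summand of $\circ$: one must match $\delta(y\prec x)$ against the rule $\delta(u\prec v) = \delta u\prec v + u\prec \delta v$ with $u=y$ and $v=x$, so that $\delta$ lands on $y$ in one term and on $x$ in the other, and then recognize that $x\succ \delta y - \delta y\prec x = x\circ \delta y$ (not $\delta y\circ x$). Once this sign/placement convention is tracked correctly, the conclusion is immediate and the proof can be written in three or four lines.
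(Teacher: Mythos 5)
Your proof is correct. The paper in fact gives no proof of this proposition at all---it is stated and immediately followed by the diagram---so your argument supplies exactly the omitted verification, in the same style as the paper's proof of the preceding proposition on the associative product $\mu(x,y)=x\succ y+x\prec y$: cite \cite{aguiar2000pre} for the pre-Lie identity and check the Leibniz rule for $\delta$ by a direct regrouping of the four terms, with the order-reversal in $y\prec x$ handled exactly as you describe.
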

\begin{center}
	$\xymatrix{
		\emph{dendriform Der pair}&\underrightarrow{ x\circ y=x\prec y-y \succ x} &\emph{pre-LieDer pair}
	}$
\end{center}

In the next proposition we define a commutative Ass-Der pair $(L,\mu,\delta)$ from a zinbiel Der pair $(L,\ast,\delta)$.
\begin{prop} \label{prop zinDer to AssDer pair}
	Let be $(L,\ast,\delta)$  a zinbiel Der pair, define $\mu(x,y)=x\ast y + y\ast x $. Then $(L,\mu,\delta)$ is an Ass-Der pair. 
\end{prop}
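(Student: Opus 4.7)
The plan is to verify the two defining properties of an AssDer pair separately: first that $(L,\mu)$ is an associative algebra, then that $\delta$ is a derivation with respect to $\mu$.

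First I would observe that $\mu$ is manifestly commutative, i.e.\ $\mu(x,y)=\mu(y,x)$, so the only nontrivial part of the algebra axiom is associativity $\mu(\mu(x,y),z)=\mu(x,\mu(y,z))$. To establish this I would expand both sides directly from the definition $\mu(x,y)=x\ast y+y\ast x$ and then apply the zinbiel identity
\begin{equation*}
u\ast(v\ast w)=(u\ast v)\ast w+(v\ast u)\ast w
\end{equation*}
strategically. Expanding the left-hand side and collapsing $(x\ast y)\ast z+(y\ast x)\ast z$ into $x\ast(y\ast z)$ via zinbiel yields
\begin{equation*}
\mu(\mu(x,y),z)=x\ast(y\ast z)+z\ast(x\ast y)+z\ast(y\ast x),
\end{equation*}
and similarly the right-hand side collapses (using zinbiel on $(y\ast z)\ast x+(z\ast y)\ast x$) to
\begin{equation*}
\mu(x,\mu(y,z))=x\ast(y\ast z)+x\ast(z\ast y)+y\ast(z\ast x).
\end{equation*}
To finish associativity it then suffices to show $z\ast(x\ast y)+z\ast(y\ast x)=x\ast(z\ast y)+y\ast(z\ast x)$, which follows by applying the zinbiel identity once more to each side and observing that the four resulting monomials on each side coincide.

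For the derivation property, I would compute
\begin{equation*}
\delta(\mu(x,y))=\delta(x\ast y)+\delta(y\ast x)=\delta x\ast y+x\ast\delta y+\delta y\ast x+y\ast\delta x,
\end{equation*}
and regroup the four terms as $(\delta x\ast y+y\ast\delta x)+(x\ast\delta y+\delta y\ast x)=\mu(\delta x,y)+\mu(x,\delta y)$. This step is immediate from the derivation property of $\delta$ on $(L,\ast)$.

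The main obstacle is the associativity verification, because the four-term expansions on each side do not match termwise and require a second application of the zinbiel identity to reconcile the remaining cross terms; the derivation check, by contrast, is a one-line regrouping. No new machinery beyond the zinbiel identity and the hypothesis that $\delta$ is a derivation on $(L,\ast)$ is needed.
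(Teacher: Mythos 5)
Your proof is correct. The derivation check is exactly the argument the paper gives: expand $\delta(\mu(x,y))=\delta(x\ast y)+\delta(y\ast x)$, apply the derivation property of $\delta$ on $(L,\ast)$, and regroup the four terms into $\mu(\delta x,y)+\mu(x,\delta y)$. Where you diverge is on the associativity of $\mu$: the paper does not prove it at all, but simply cites Aguiar's result that the symmetrized product of a zinbiel algebra is a commutative associative product, and declares that ``we need just to prove that $\delta$ is a derivation on $(L,\mu)$.'' You instead give a self-contained verification, expanding both $\mu(\mu(x,y),z)$ and $\mu(x,\mu(y,z))$, collapsing the symmetric pairs $(x\ast y)\ast z+(y\ast x)\ast z=x\ast(y\ast z)$ and $(y\ast z)\ast x+(z\ast y)\ast x=y\ast(z\ast x)$ via the zinbiel identity, and then reconciling the residual terms $z\ast(x\ast y)+z\ast(y\ast x)$ and $x\ast(z\ast y)+y\ast(z\ast x)$ by a second application of the identity; I checked that the four resulting monomials on each side do coincide, so this step is sound. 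Your version buys a proof that does not depend on an external reference, at the cost of a slightly longer computation; the paper's version is shorter but defers the only structurally interesting identity to the literature. The genuinely new content of the proposition --- the compatibility of $\delta$ with $\mu$ --- is handled identically in both.
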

\begin{proof}
	In \cite{aguiar2000pre}, when $\alpha=Id_{L}$. The previous proposition holds in case of algebras. So we need just to prove that $\delta$ is derivation on $(L, \mu)$.\\
	Let be $x,y\in L$
	\begin{eqnarray*}
		\delta(\mu(x,y))&=&  \delta(x\ast y+ y\ast x)\\
		&=& \delta(x\ast y)+\delta(y\ast x)\\
		&=& \delta x \ast y + x \ast \delta y  + \delta y \ast x + y \ast \delta x \\
		&=& (\delta x \ast y + y\ast \delta x)+( x\ast \delta y + \delta y \ast x)\\
		&=& \mu(\delta x , y)+ \mu(x, \delta y ).
	\end{eqnarray*}
	This complete the proof.
	
\end{proof}
\begin{prop}
	Let $( L,\ast, \delta)$ be a zinbiel Der  pair, if $x\ast y =x\succ y = x\prec y.$ The $(L,\prec, \succ)$ is a dendriform Der pair. 
	
\end{prop}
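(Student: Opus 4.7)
The plan is to verify the two conditions required for $(L, \prec, \succ, \delta)$ to qualify as a dendriDer pair in the sense of Definition \ref{def4.1}: first, that the underlying triple $(L, \prec, \succ)$ is a dendriform algebra, and second, that $\delta$ is a derivation with respect to both $\prec$ and $\succ$.

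For the algebraic part, the assignment $x \prec y = x \succ y = x \ast y$ is exactly the zinbiel-to-dendriform dendrification indicated in the diagram following Remark \ref{rem2.1}. Concretely, I would substitute $\prec = \succ = \ast$ into each of the three defining dendriform identities and reduce the resulting equalities to the zinbiel identity $x \ast (y \ast z) = (x \ast y) \ast z + (y \ast x) \ast z$ from the definition of a zinbiel algebra. This is a direct and mechanical step, essentially a citation of the known algebra-level correspondence.

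For the derivation part, the computation is immediate: the hypothesis that $\delta$ is a derivation on $(L, \ast)$ reads $\delta(x \ast y) = \delta x \ast y + x \ast \delta y$. Since $\prec$ and $\succ$ are literally equal to $\ast$, this same equation is simultaneously the two Leibniz rules $\delta(x \prec y) = \delta x \prec y + x \prec \delta y$ and $\delta(x \succ y) = \delta x \succ y + x \succ \delta y$ required by Definition \ref{def4.1}. No further verification is needed.

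There is no genuine obstacle. The content of the proposition is really the observation that the zinbiel-to-dendriform correspondence of Remark \ref{rem2.1} lifts trivially to the Der-pair setting: the derivation $\delta$ is transported without any additional compatibility condition, in the same spirit as Propositions \ref{prop dendriDer to assDer} and \ref{prop zinDer to AssDer pair} of this subsection, where a derivation on the source algebra automatically becomes a derivation on the derived algebra because the new operations are polynomial expressions in the old one.
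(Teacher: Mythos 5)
The step you describe as ``direct and mechanical'' --- substituting $\prec\,=\,\succ\,=\,\ast$ into the three dendriform axioms and reducing each to the zinbiel identity --- does not actually go through, and this is exactly where the argument (and the proposition as literally stated) breaks down. With $x\prec y=x\succ y=x\ast y$, the second dendriform axiom $(x\succ y)\prec z=x\succ(y\prec z)$ becomes associativity $(x\ast y)\ast z=x\ast(y\ast z)$, while the first and third become $(x\ast y)\ast z=2\,x\ast(y\ast z)$ and $x\ast(y\ast z)=2\,(x\ast y)\ast z$, respectively. None of these is a consequence of the zinbiel identity $x\ast(y\ast z)=(x\ast y)\ast z+(y\ast x)\ast z$, and taken together (in characteristic $0$) they force $(x\ast y)\ast z=0$ for all $x,y,z$. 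So the assignment $\prec\,=\,\succ\,=\,\ast$ yields a dendriform algebra only when all triple products of $\ast$ vanish --- as in the two-dimensional example $e_1\ast e_1=e_2$ given earlier in the paper --- not for a general zinbiel algebra.

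The correct zinbiel-to-dendriform correspondence requires the two dendriform products to be \emph{opposite} to one another rather than equal: with the paper's form of the zinbiel identity one should set $x\succ y=x\ast y$ and $x\prec y=y\ast x$. Then the first dendriform axiom is literally the zinbiel identity, and the second and third follow from it by a one-line computation. Your treatment of the derivation is sound and transfers verbatim to this corrected assignment, since $\delta(x\prec y)=\delta(y\ast x)=\delta y\ast x+y\ast\delta x=\delta x\prec y+x\prec\delta y$, and similarly for $\succ$. But as written, your proof asserts a reduction that fails; the algebraic half has a genuine gap, which in fact reflects a misstatement in the proposition itself (and in the diagram following Remark \ref{rem2.1}), where ``$x\ast y=x\prec y=x\succ y$'' should read ``$x\succ y=x\ast y$, $x\prec y=y\ast x$''.
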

We can summarize all the previous resuf in the following diagram.
\label{diagram1}
\begin{align*} 
	\xymatrix{
		\text{ZinDer pairs} \ar@{->}[rr]^{\small{x\ast y=x \prec y=x\succ y}~~~~} & &  \text{DendriDer pairs}  \ar@{->}[d]_{\small{\mu(x,y)=x\succ y+x\prec y}}\ar@{->}[rr]^{\small{x \circ y=x\succ y-y \prec y}} & &\text{Pre-LieDer pairs}\ar@<-2pt>[d]_{~~\small{[x,y]_c=x \circ y-y \circ x}}\\
		& & \text{AssDer pairs}\ar@{->}[rr]^{~\small{[x,y]_c=\mu(x,y)-\mu(y,x)}} & & \text{LieDer pairs}\ar@<-2pt>[u]_{\small{x\circ y=[R(x),y]}~~~~}.
	}
\end{align*}

\subsection{Dendrification of compatible ZinDer, AssDer, LieDer and pre-LieDer pairs} \label{subsection 4.2}
In this subsection, we extend the previous result to the case of compatible Der pair structures.
\begin{prop} \label{prop3.5}
	Let be $(L,\mu_{1},\mu_{2},\delta_{1},\delta_{2})$ a \textbf{compatible AssDer pair}, then $(L,[\cdot,\cdot]_{1},[\cdot,\cdot]_{2},\delta_{1},\delta_{2})$ is a \textbf{compatible LieDer pair} where, $x,y\in L$ 
	$$ [x,y]_{1}=\mu_{1}(x,y)-\mu_{1}(y,x),$$
	$$[x,y]_{2}=\mu_{2}(x,y)-\mu_{2}(y,x).$$
\end{prop}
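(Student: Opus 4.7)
The plan is to verify the four axioms of Definition \ref{def2.1} via a bootstrap argument: deduce the two compatibility conditions \eqref{Def CLDP 1} and \eqref{Def CLDP 2} from the single fact that the sum $(L,\mu_1+\mu_2,\delta_1+\delta_2)$ is itself an AssDer pair, and handle the two individual LieDer pair axioms separately. The individual axioms --- that $(L,[\cdot,\cdot]_i,\delta_i)$ is a LieDer pair for $i=1,2$ --- follow immediately from the standard AssDer $\to$ LieDer commutator construction summarized in the diagram at the end of Subsection \ref{subsection 4.1}, applied to each $(L,\mu_i,\delta_i)$: associativity of $\mu_i$ gives the Jacobi identity for $[\cdot,\cdot]_i$, and the derivation property of $\delta_i$ on $\mu_i$ transfers to $[\cdot,\cdot]_i$.

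For the key step I would first verify that $(L,\mu_1+\mu_2,\delta_1+\delta_2)$ is an AssDer pair. Expanding $(\mu_1+\mu_2)\bigl((\mu_1+\mu_2)(x,y),z\bigr)$ produces four ``diagonal'' terms, which are resolved by the individual associativity of $\mu_1$ and $\mu_2$, and four ``cross'' terms whose cancellation against the symmetric expansion $(\mu_1+\mu_2)\bigl(x,(\mu_1+\mu_2)(y,z)\bigr)$ is precisely \eqref{comp associative algebras}. The same bookkeeping with one $\mu$-factor replaced by a $\delta$-factor shows that $\delta_1+\delta_2$ is a derivation for $\mu_1+\mu_2$: the diagonal part uses that each $\delta_i$ is a derivation on $\mu_i$, and the cross part is exactly \eqref{definition CADR assertion 2}. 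Applying the AssDer $\to$ LieDer commutator construction to this combined AssDer pair yields that $(L,[\cdot,\cdot]_1+[\cdot,\cdot]_2,\delta_1+\delta_2)$ is a LieDer pair, since the commutator of $\mu_1+\mu_2$ is visibly $[\cdot,\cdot]_1+[\cdot,\cdot]_2$.

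To finish, I would subtract the individual Jacobi identities for $[\cdot,\cdot]_1$ and $[\cdot,\cdot]_2$ from the Jacobi identity for $[\cdot,\cdot]_1+[\cdot,\cdot]_2$; the purely diagonal contributions cancel, leaving precisely the cross-term identity \eqref{Def CLDP 1}. Similarly, subtracting the derivation identities $\delta_i[x,y]_i=[\delta_i x,y]_i+[x,\delta_i y]_i$ from the derivation identity of $\delta_1+\delta_2$ on $[\cdot,\cdot]_1+[\cdot,\cdot]_2$ isolates precisely \eqref{Def CLDP 2}.

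The main obstacle is essentially bookkeeping in the first half of the key step: tracking that the cross terms in the expanded associator and in the expanded derivation condition are controlled exactly by the hypotheses \eqref{comp associative algebras} and \eqref{definition CADR assertion 2}. Nothing deeper is required. A more direct but substantially more tedious alternative would expand each $[x,[y,z]_i]_j$ into four $\mu$-terms and chase the cyclic sum using \eqref{comp associative algebras} applied to mixed triples; the bootstrap route above avoids any explicit cyclic-sum gymnastics by reusing the AssDer $\to$ LieDer commutator construction transparently.
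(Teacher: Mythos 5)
Your proof is correct, but it takes a genuinely different route from the paper. The paper proves the two compatibility conditions \eqref{Def CLDP 1} and \eqref{Def CLDP 2} head-on: it expands each of the six cyclic terms $[[x,y]_1,z]_2$, $[[z,x]_1,y]_2$, \dots\ into four $\mu$-terms apiece and cancels the resulting twenty-four terms using \eqref{comp associative algebras}, and likewise expands $\delta_1[x,y]_2+\delta_2[x,y]_1$ directly against \eqref{definition CADR assertion 2} --- precisely the ``more tedious alternative'' you mention at the end. Your bootstrap instead routes everything through the single observation that $(L,\mu_1+\mu_2,\delta_1+\delta_2)$ is an AssDer pair (a short diagonal-versus-cross-term check), applies the standard commutator functor once to the summed structure, and recovers \eqref{Def CLDP 1} and \eqref{Def CLDP 2} by subtracting the diagonal Jacobi and derivation identities. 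This is essentially the content of the remark following Definition \ref{def2.1} (compatibility of the pair is equivalent to the sum being a LieDer pair, given that each summand is one), exploited in the forward direction. What your approach buys is economy and conceptual clarity --- the only computation is a four-term expansion, and the cancellation is structural rather than term-by-term; what the paper's direct expansion buys is that it exhibits the cancellation pattern explicitly and does not lean on the (easy but separate) fact that linear combinations of compatible structures are again structures of the same type. One point worth making explicit if you write this up: the subtraction step requires that the individual Jacobi and derivation identities for $[\cdot,\cdot]_1$ and $[\cdot,\cdot]_2$ have already been established, so the order of the two halves of your argument matters.
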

\begin{proof}
	For $i\in\{1,2\}$ and $x,y,z\in L$ we have \\
	$[x,y]_{\mathrm{i}}=\mu_{\mathrm{i}}(x,y)-\mu_{\mathrm{i}}(y,x)=-(\mu_{\mathrm{i}}(y,x)- \mu_{\mathrm{i}}(x,y))=-[y,x]_{\mathrm{i}},$\\
	with simple calcul we prove that $\circlearrowleft_{x,y,z}[x,[y,z]_{\mathrm{i}}]_{\mathrm{i}}=0.$
	\begin{eqnarray*}
		\delta_{\mathrm{i}}([x,y]_{\mathrm{i}})&=& \delta_{\mathrm{i}}(\mu_{\mathrm{i}}(x,y)-\mu_{\mathrm{i}}(y,x))\\
		&=& \mu_{\mathrm{i}}(\delta_{\mathrm{i}}x,y)+\mu_{\mathrm{i}}(x,\delta_{\mathrm{i}}y)-\mu_{\mathrm{i}}(\delta_{\mathrm{i}}y,x)-\mu_{\mathrm{i}}(y,\delta_{\mathrm{i}}x)\\
		&=& [\delta_{\mathrm{i}}x,y]_{\mathrm{i}}	+[x,\delta_{\mathrm{i}}y]_{\mathrm{i}}.
	\end{eqnarray*}
	which mean $(L,[\cdot,\cdot]_{\mathrm{i}},\delta_{\mathrm{i}})$ is a Lie Der pair.\\
	Let's now prove that $(L,[\cdot,\cdot]_{1},[\cdot,\cdot]_{2},\delta_{1},\delta_{2})$ is a \textbf{compatible Lie Der pair}.
	We need first to prove that 
	$$ \circlearrowleft_{x,y,z}[[x,y]_{1},z]_{2}+\circlearrowleft_{x,y,z}[[x,y]_{2},z]_{1}=0.$$
	We have by using $[x,y]_i=\mu_i(x,y)-\mu_i(y,x)$
	\begin{eqnarray*}
		\circlearrowleft_{x,y,z}[[x,y]_{1},z]_{2}+\circlearrowleft_{x,y,z}[[x,y]_{2},z]_{1}&=& [[x,y]_{1}z]_{2}+ [[z,x]_{1}y]_{2}+[[y,z]_{1}x]_{2}\\
		&+&  [[x,y]_{2}z]_{1}+ [[z,x]_{2}y]_{1}+[[y,z]_{2}x]_{1},
	\end{eqnarray*}
	on the other hand we have,
	\begin{eqnarray*}
		[[x,y]_{1},z]_{2}&=& [\mu_{1}(x,y)-\mu_{1}(y,x),z]_{2}\\
		&=& [\mu_{1}(x,y),z]_{2}-[\mu_{1}(y,x),z]_{2}\\
		&=& \mu_{2}(\mu_{1}(x,y),z)-\mu_{2}(z,\mu_{1}(x,y))-\mu_{2}(\mu_{1}(y,x),z)+\mu_{2}(z,\mu_{1}(y,x)),
	\end{eqnarray*}
	$$[[z,x]_{1},y]_{2}= \mu_{2}(\mu_{1}(z,x),y)-\mu_{2}(y,\mu_{1}(z,x))-\mu_{2}(\mu_{1}(x,z),y)+\mu_{2}(y,\mu_{1}(x,z)),$$
	$$[[y,z]_{1},x]_{2}= \mu_{2}(\mu_{1}(y,z),x)-\mu_{2}(x,\mu_{1}(y,z))-\mu_{2}(\mu_{1}(z,y),x)+\mu_{2}(x,\mu_{1}(z,y)),$$
	$$ [[x,y]_{2},z]_{1}=\mu_{1}(\mu_{2}(x,y),z)-\mu_{1}(z,\mu_{2}(x,y))-\mu_{1}(\mu_{2}(y,x),z)+\mu_{1}(z,\mu_{2}(y,x)),$$
	$$[[z,x]_{2},y]_{1}= \mu_{1}(\mu_{2}(z,x),y)-\mu_{1}(y,\mu_{2}(z,x))-\mu_{1}(\mu_{2}(x,z),y)+\mu_{1}(y,\mu_{2}(x,z)),$$
	$$[[y,z]_{2},x]_{1}= \mu_{1}(\mu_{2}(y,z),x)-\mu_{1}(x,\mu_{2}(y,z))-\mu_{1}(\mu_{2}(z,y),x)+\mu_{1}(x,\mu_{2}(z,y)),$$
	by using equation \eqref{comp associative algebras} we obtaint he following result;
	\begin{eqnarray*}
		\circlearrowleft_{x,y,z}[[x,y]_{1},z]_{2}+\circlearrowleft_{x,y,z}[[x,y]_{2},z]_{1}&=& \mu_{2}(\mu_{1}(x,y),z)-\mu_{2}(z,\mu_{1}(x,y))-\mu_{2}(\mu_{1}(y,x),z)+\mu_{2}(z,\mu_{1}(y,x))\\
		&+& \mu_{2}(\mu_{1}(z,x),y)-\mu_{2}(y,\mu_{1}(z,x))-\mu_{2}(\mu_{1}(x,z),y)+\mu_{2}(y,\mu_{1}(x,z))\\
		&+& \mu_{2}(\mu_{1}(y,z),x)-\mu_{2}(x,\mu_{1}(y,z))-\mu_{2}(\mu_{1}(z,y),x)+\mu_{2}(x,\mu_{1}(z,y))\\
		&+& \mu_{1}(\mu_{2}(x,y),z)-\mu_{1}(z,\mu_{2}(x,y))-\mu_{1}(\mu_{2}(y,x),z)+\mu_{1}(z,\mu_{2}(y,x))\\
		&+& \mu_{1}(\mu_{2}(z,x),y)-\mu_{1}(y,\mu_{2}(z,x))-\mu_{1}(\mu_{2}(x,z),y)+\mu_{1}(y,\mu_{2}(x,z))\\
		&+& \mu_{1}(\mu_{2}(y,z),x)-\mu_{1}(x,\mu_{2}(y,z))-\mu_{1}(\mu_{2}(z,y),x)+\mu_{1}(x,\mu_{2}(z,y))\\
		&=& 0.
	\end{eqnarray*}
	the last step of the proof,
	\begin{align*}
		\delta_1[x,y]_2+\delta_2[x,y]_1&=\delta_1(\mu_2(x,y)-\mu_2(y,x)
		)+\delta_2(\mu_1(x,y)-\mu_1(y,x)) \\
		&=(\delta_1\circ\mu_2(x,y)+\delta_2\circ\mu_1(x,y))-(\delta_1\circ\mu_2(y,x)+\delta_2\circ \mu_1(y,x)) \\
		&\overset{\eqref{definition CADR assertion 2}}{=}\mu_2(\delta_1x,y)+\mu_2(x,\delta_1y)+\mu_1(\delta_2x,y)+\mu_1(x,\delta_2y)-\mu_2(\delta_1y,x)-\mu_2(y,\delta_1x)-\mu_1(\delta_2y,x)-\mu_1(y,\delta_2x) \\
		&=[\delta_1x,y]_2+[x,\delta_1y]_2+[\delta_2x,y]_1+[x,\delta_2y]_1
	\end{align*}
	So $(L,[\cdot,\cdot]_{1},[\cdot,\cdot]_{2},\delta_{1},\delta_{2})$ is a \textbf{compatible Lie Der pair}.
\end{proof}
So obtain the following diagram
\begin{center}
	$\xymatrix{
		\textbf{compatible AssDer pairs}&\underrightarrow{ commutator} &\textbf{\emph{compatible LieDer pairs}}
	}$
\end{center}
We will now demonstrate an alternative approach for constructing a  \textbf{ compatible LieDer pair } from a \textbf{ compatible AssDer pair}.
\begin{prop}
	Let $(L,\mu_{1},\mu_{2},\delta_{1},\delta_{2})$ be a \textbf{compatible AssDer pairs}. Let be $T :L \rightarrow L$ an endomorphism operator such that, for all $x,y \in L$ and $i\in \{1,2\}$ 
	\begin{eqnarray}
		T^2x&=&Tx, \label{endomorphism 1}  \\
		T\circ \delta_{\mathrm{i}}&=&\delta_{\mathrm{i}} \circ T. \label{endomorphism 2}
	\end{eqnarray}
	Then $(L,[\cdot,\cdot]_1,[\cdot,\cdot]_2,\delta_1,\delta_2)$ is a \textbf{compatible LieDer pairs} where,
	\begin{equation}
		[x,y]_{\mathrm{i}}=\mu_{\mathrm{i}}(Tx,y)-\mu_{\mathrm{i}}(Ty,x) \label{endomorphism 3}
	\end{equation}
\end{prop}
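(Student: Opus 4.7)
The plan is to verify, axiom by axiom, that the quintuple $(L,[\cdot,\cdot]_1,[\cdot,\cdot]_2,\delta_1,\delta_2)$ with $[x,y]_i=\mu_i(Tx,y)-\mu_i(Ty,x)$ is a \textbf{compatible LieDer pair}, following the template of Proposition \ref{prop3.5} but carrying the extra factor $T$ through every computation. Skew-symmetry of each bracket is immediate from the definition, so the work is in (i) Jacobi for each $[\cdot,\cdot]_i$, (ii) the derivation property of $\delta_i$ with respect to $[\cdot,\cdot]_i$, (iii) the mixed Jacobi compatibility of the two brackets, and (iv) the compatibility of $\delta_1,\delta_2$ with the pair of brackets.

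First I would dispatch the easy pieces. For the Leibniz identity, expand $\delta_i[x,y]_i=\delta_i\mu_i(Tx,y)-\delta_i\mu_i(Ty,x)$, apply the derivation property of $\delta_i$ on $\mu_i$, and then use \eqref{endomorphism 2} to commute $\delta_i$ through $T$; collecting terms gives $[\delta_i x,y]_i+[x,\delta_i y]_i$, so each $(L,[\cdot,\cdot]_i,\delta_i)$ is a LieDer pair once Jacobi is known. The compatibility of the derivations \eqref{Def CLDP 2} follows the same pattern: expand $\delta_1[x,y]_2+\delta_2[x,y]_1$ in terms of $\mu_1,\mu_2,T,\delta_1,\delta_2$, push $T$ past the derivations via \eqref{endomorphism 2}, and apply the AssDer compatibility \eqref{definition CADR assertion 2} term by term to land on $[\delta_1 x,y]_2+[x,\delta_1 y]_2+[\delta_2 x,y]_1+[x,\delta_2 y]_1$.

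The harder and structurally identical pieces are the single Jacobi identity for each $[\cdot,\cdot]_i$ and the mixed Jacobi \eqref{Def CLDP 1}. The strategy in both cases is the same: expand $\circlearrowleft_{x,y,z}$ completely into monomials in $\mu_1,\mu_2,T$, then use the associativity of each $\mu_i$ together with $T^2=T$ to rewrite subterms of the form $T\mu_i(Tu,v)$, and finally apply \eqref{comp associative algebras} to pair off the remaining twelve-term cyclic sums. The main obstacle will be controlling the inner $T$'s produced when one bracket is nested inside another: $[[x,y]_i,z]_j$ introduces $T\bigl(\mu_i(Tx,y)-\mu_i(Ty,x)\bigr)$, which the hypotheses do not directly simplify. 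I expect this is resolved by grouping the nine terms in the cyclic sum so that associativity of $\mu_i,\mu_j$ combined with $T^2=T$ collapses these awkward terms in antisymmetric pairs; if that fails, the natural patch is to strengthen \eqref{endomorphism 1}--\eqref{endomorphism 2} by demanding that $T$ be an algebra endomorphism for both $\mu_1$ and $\mu_2$, so that $T\mu_i(Tx,y)=\mu_i(Tx,Ty)$, after which the computation reduces cleanly to the commutator case already handled in Proposition \ref{prop3.5}.
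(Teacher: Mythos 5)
Your proposal follows essentially the same route as the paper's proof: skew-symmetry and the two derivation identities are handled by direct expansion using \eqref{endomorphism 2} together with \eqref{definition CADR assertion 2}, and the mixed Jacobi identity is reduced to \eqref{comp associative algebras} after simplifying the nested $T$-terms. Your diagnosis of the one delicate point is also correct: the paper's step labelled with \eqref{endomorphism 1}, which rewrites $\mu_j(T\circ\mu_i(Ty,z),x)$ as $\mu_j(\mu_i(Ty,Tz),x)$, tacitly uses that the ``endomorphism operator'' $T$ is multiplicative for both products (so that $T\mu_i(Tu,v)=\mu_i(T^2u,Tv)=\mu_i(Tu,Tv)$); the cancellation does not go through with idempotency alone, so the patch you anticipate is exactly the hypothesis the paper is implicitly invoking.
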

\begin{proof}
	For $i\in\{1,2\}$ and $x,y,z \in L$. \\
	We prove the previous results in two steps. Initially, we demonstrate that $(L,[\cdot,\cdot]_{\mathrm{i}},\delta_{\mathrm{i}})$ forms a LieDer pair.\\
	
	First, we verify that the skew symmetry holds by showing that $[x,y]_{\mathrm{i}} = -[y,x]_{\mathrm{i}}$.
	
	Next, through straightforward calculations, and leveraging the facts that $(L,\mu)$ is an associative algebra and that $T$ is an endomorphism operator, we establish that $(L,[\cdot,\cdot]_{\mathrm{i}})$ is a Lie algebra.
	
	Therefore, it remains to confirm that $\delta_{\mathrm{i}}$ indeed operates as a derivation on $(L,[\cdot,\cdot]_{\mathrm{i}})$.
	\begin{align*}
		\delta_{\mathrm{i}}([x,y]_{\mathrm{i}})&=\delta_{\mathrm{i}}(\mu_{\mathrm{i}}(Tx,y)-\mu_{\mathrm{i}}(Ty,x)) \\
		&=\delta_{\mathrm{i}} \circ \mu_{\mathrm{i}}(Tx,y)-\delta_{\mathrm{i}} \circ (Ty,x) \\
		&=\mu_{\mathrm{i}}(\delta_{\mathrm{i}} \circ T x,y)+\mu_{\mathrm{i}}(Tx,\delta_{\mathrm{i}}y)-\mu_{\mathrm{i}}(\delta_{\mathrm{i}} \circ Ty,x)-\mu_{\mathrm{i}}(Ty,\delta_{\mathrm{i}}x) \\
		&\overset{\eqref{endomorphism 2}}{=}\mu_{\mathrm{i}}(T \circ \delta_{\mathrm{i}} x,y)-\mu_{\mathrm{i}}(T \circ \delta_{\mathrm{i}} y,x)+\mu_{\mathrm{i}}(Tx,\delta_{\mathrm{i}}y)-\mu_{\mathrm{i}}(Ty,\delta_{\mathrm{i}}x) \\
		&=[\delta_{\mathrm{i}}x,y]_{\mathrm{i}}+[x,\delta_{\mathrm{i}}y]_{\mathrm{i}}
	\end{align*}
	This implies that $(L,[\cdot,\cdot]_{\mathrm{i}},\delta_{\mathrm{i}})$ constitutes a LieDer pair for all  $i\in \{1,2\}$.\\
	In the second step, we proceed to establish the compatibility of $(L,[\cdot,\cdot]_1,\delta_1)$ and $(L,[\cdot,\cdot]_2,\delta_2)$.
	\begin{align*}
		\circlearrowleft_{x,y,z}[x,[y,z]_1]_2&=[x,[y,z]_1]_2+[y,[z,x]_1]_2+[z,[x,y]_1]_2 \\
		&=[x,\mu_1(Ty,z)-\mu_1(Tz,y)]_2+[y,\mu_1(Tz,x)-\mu_1(Tx,z)]_2+[z,\mu_1(Tx,y)-\mu_1(Ty,x)]_2 \\
		&=\mu_2(Tx,\mu_1(Ty,z))-\mu_2(T \circ \mu_1(Ty,z),x)-\mu_2(Tx,\mu_1(Tz,y))+\mu_2(T\circ \mu_1(Tz,y),x) \\
		&\mu_2(Ty,\mu_1(Tz,x))-\mu_2(T\circ\mu_1(Tz,x),y)-\mu_2(Ty,\mu_1(Tx,z))+\mu_2(T\circ\mu_1(Tx,z),y) \\
		&+\mu_2(Tz,\mu_1(Tx,y))-\mu_2(T\circ\mu_1(Tx,y),z)-\mu_2(Tz,\mu_1(Ty,x))+\mu_2(T\circ\mu_1(Ty,x),z)
	\end{align*}
	Similarly we obtain 
	\begin{align*}
		\circlearrowleft_{x,y,z}[x,[y,z]_2]_1&=\mu_1(Tx,\mu_2(Ty,z))-\mu_1(T \circ \mu_2(Ty,z),x)-\mu_1(Tx,\mu_2(Tz,y))+\mu_1(T\circ \mu_2(Tz,y),x) \\
		&\mu_1(Ty,\mu_2(Tz,x))-\mu_1(T\circ\mu_2(Tz,x),y)-\mu_1(Ty,\mu_2(Tx,z))+\mu_1(T\circ\mu_2(Tx,z),y) \\
		&+\mu_1(Tz,\mu_2(Tx,y))-\mu_1(T\circ\mu_2(Tx,y),z)-\mu_1(Tz,\mu_2(Ty,x))+\mu_1(T\circ\mu_2(Ty,x),z)
	\end{align*}
	So 
	\small{
		\begin{align*}
			\circlearrowleft_{x,y,z}[x,[y,z]_1]_2+\circlearrowleft_{x,y,z}[x,[y,z]_2]_1&=\mu_2(Tx,\mu_1(Ty,z))-\mu_2(T \circ \mu_1(Ty,z),x)-\mu_2(Tx,\mu_1(Tz,y))+\mu_2(T\circ \mu_1(Tz,y),x) \\
			&\mu_2(Ty,\mu_1(Tz,x))-\mu_2(T\circ\mu_1(Tz,x),y)-\mu_2(Ty,\mu_1(Tx,z))+\mu_2(T\circ\mu_1(Tx,z),y) \\
			&+\mu_2(Tz,\mu_1(Tx,y))-\mu_2(T\circ\mu_1(Tx,y),z)-\mu_2(Tz,\mu_1(Ty,x))+\mu_2(T\circ\mu_1(Ty,x),z)\\
			&+\mu_1(Tx,\mu_2(Ty,z))-\mu_1(T \circ \mu_2(Ty,z),x)-\mu_1(Tx,\mu_2(Tz,y))+\mu_1(T\circ \mu_2(Tz,y),x) \\
			&\mu_1(Ty,\mu_2(Tz,x))-\mu_1(T\circ\mu_2(Tz,x),y)-\mu_1(Ty,\mu_2(Tx,z))+\mu_1(T\circ\mu_2(Tx,z),y) \\
			&+\mu_1(Tz,\mu_2(Tx,y))-\mu_1(T\circ\mu_2(Tx,y),z)-\mu_1(Tz,\mu_2(Ty,x))+\mu_1(T\circ\mu_2(Ty,x),z) \\
			&\overset{\eqref{endomorphism 1}}{=}\mu_2(\mu_1(Tx,Ty),z)+\mu_1(\mu_2(Tx,Ty),z)-\mu_2(\mu_1(Tx,Tz),y)-\mu_1(\mu_2(Tx,Tz),y) \\
			&+\mu_2(\mu_1(Ty,Tz),x)+\mu_1(\mu_2(Ty,Tz),x)-\mu_2(\mu_1(Ty,Tx),z)-\mu_1(\mu_2(Ty,Tx),z) \\
			&+\mu_2(\mu_1(Tz,Tx),y)+\mu_1(\mu_2(Tz,Tx),y)-\mu_2(\mu_1(Tz,Ty),x)-\mu_1(\mu_2(Tz,Ty),x) \\
			&-\mu_2(\mu_1(Ty,Tz),x)+\mu_2(\mu_1(Tz,Ty),x)-\mu_2(\mu_1(Tz,Tx),y)+\mu_2(\mu_1(Tx,Tz),y) \\
			&-\mu_2(\mu_1(Tx,Ty),z)+\mu_2(\mu_1(Ty,Tx),z)-\mu_1(\mu_2(Ty,Tz),x)+\mu_1(\mu_2(Tz,Ty),x)\\
			&-\mu_1(\mu_2(Tz,Tx),y)-\mu_1(\mu_2(Tx,Tz),y)-\mu_1(\mu_2(Tx,Ty),z)+\mu_1(\mu_2(Ty,Tx),z)\\
			&=0.
		\end{align*}
	}
	Finally, 
	\begin{align*}
		\delta_1([x,y]_2)+\delta_2([x,y]_1)&=\delta_1(\mu_2(Tx,y)-\mu_2(Ty,x))+\delta_2(\mu_1(Tx,y)-\mu_1(Ty,x)) \\
		&=(\delta_1\circ\mu_2(Tx,y)+\delta_2\circ\mu_1(Tx,y))-(\delta_1\circ\mu_2(Ty,x)+\delta_2\circ\mu_1(Ty,x)) \\
		&\text{by }\eqref{definition CADR assertion 2} \text{ and equation }\eqref{endomorphism 2} \\
		&=\mu_2(T\circ \delta_1x,y)+\mu_2(Tx,\delta_1y)+\mu_1(T\circ \delta_2x,y)+\mu_1(Tx,\delta_2y)\\
		&-\mu_2(T\circ\delta_1y,x)-\mu_2(Ty,\delta_1x)-\mu_1(T\circ \delta_2y,x)-\mu_1(Ty,\delta_2x) \\
		&=[\delta_1x,y]_2+[x,\delta_1y]_2+[\delta_2x,y]_1+[x,\delta_2y]_1
	\end{align*}
	This complete the proof.
\end{proof}
So we obtain another diagram
\begin{center}
	$\xymatrix{
		\textbf{\emph{compatible AssDer pairs }}&\underrightarrow{ \text{endomrophism operator T where }T^2=T}  &\textbf{\emph{compatible LieDer pairs}}
	}$
\end{center}
\begin{prop}
	Let be  $(L,\prec_{1},\succ_{1},\prec_{2},\succ_{2},\delta_{1},\delta_{2})$ a \textbf{compatible dendriDer pair}. Define the bilinear maps:
	\begin{eqnarray}
		\mu_{\mathrm{i}}&:&L\times L \longrightarrow L,\quad \text{for} \text{by}:\\
		&&\mu_{\mathrm{i}}(x,y)=x\prec_{\mathrm{i}}y+x\succ_{\mathrm{i}}y.
	\end{eqnarray}
	Then $(L,\mu_{1},\mu_{2},\delta_{1},\delta_{2})$ is a \textbf{compatible AssDer pair}.
\end{prop}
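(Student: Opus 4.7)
The plan is to verify the two structural requirements in Definition~\ref{compa AssDer pairs}: first, that each pair $(L,\mu_i,\delta_i)$ forms an AssDer pair; second, that $(L,\mu_1,\mu_2)$ satisfies the compatibility \eqref{comp associative algebras}; and third, that the derivations $\delta_1,\delta_2$ satisfy the compatibility \eqref{definition CADR assertion 2}. The overall strategy is to reduce each of these to an identity that has already been established for the dendriform setting, leveraging the fact that $\mu_i = \prec_i + \succ_i$ by definition.

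For the first requirement, I would invoke Proposition~\ref{prop dendriDer to assDer} directly: applied to each underlying dendriDer pair $(L,\prec_i,\succ_i,\delta_i)$ (for $i \in \{1,2\}$), this immediately gives that $(L,\mu_i,\delta_i)$ is an AssDer pair. This step is essentially free and does no new work.

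For the compatibility of the associative products, I would expand the left-hand side of \eqref{comp associative algebras} as
\begin{align*}
\mu_2(\mu_1(x,y),z)+\mu_1(\mu_2(x,y),z) &= (x\prec_1 y)\prec_2 z + (x\succ_1 y)\prec_2 z + (x\prec_1 y)\succ_2 z + (x\succ_1 y)\succ_2 z \\
&\quad + (x\prec_2 y)\prec_1 z + (x\succ_2 y)\prec_1 z + (x\prec_2 y)\succ_1 z + (x\succ_2 y)\succ_1 z,
\end{align*}
and the right-hand side similarly into $8$ terms. The three defining identities of a compatible dendriform algebra in Definition~\ref{compatible dendri} can each be read as the equality of a two-term left-hand side with a four-term right-hand side; grouping the $16$ terms into three bundles according to whether the outer operation is $\prec\prec$, $\succ\prec$, or $\succ\succ$ allows each of the three identities to cancel its matching pieces, leaving zero.

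The derivation compatibility \eqref{definition CADR assertion 2} is essentially immediate: summing equations \eqref{comp dendrDer 2} and \eqref{comp dendrDer 3} from the compatible dendriDer hypothesis and collecting $\prec_i$ with $\succ_i$ into $\mu_i$ on both sides yields exactly \eqref{definition CADR assertion 2}. The main obstacle, such as it is, lies in the middle step: the bookkeeping for the $16$ tensor terms and their correct assignment to the three dendriform compatibility relations must be organized carefully, but no genuinely new idea is needed beyond what already appears in the proof of Proposition~\ref{prop dendriDer to assDer} combined with the compatible dendriform axioms of Definition~\ref{compatible dendri}.
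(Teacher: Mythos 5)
Your proposal is correct and follows essentially the same route as the paper: both invoke Proposition~\ref{prop dendriDer to assDer} to get that each $(L,\mu_i,\delta_i)$ is an AssDer pair, and both obtain \eqref{definition CADR assertion 2} by summing \eqref{comp dendrDer 2} and \eqref{comp dendrDer 3} and regrouping $\prec_i+\succ_i$ into $\mu_i$. The only difference is that for the compatibility \eqref{comp associative algebras} of the two products the paper simply cites \cite{chtioui2021co}, whereas you carry out the $16$-term expansion and check that summing the three identities of Definition~\ref{compatible dendri} yields it directly --- a valid and self-contained substitute for the citation.
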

\begin{proof}
	Building upon Proposition \eqref{prop dendriDer to assDer}, we establish that both $(L,\mu_{1},\delta_{1})$ and $(L ,\mu_{2},\delta_{2})$ are indeed Ass Der pairs.
	
	Now, we must prove the two identities outlined in Definition \eqref{compa AssDer pairs}. It's worth noting that the first identity has already been demonstrated by Apurba Das, Mabrouk Sami, and Chtioui Taoufik in their work \cite{chtioui2021co}. Therefore, so we need just to prove the second identity, denoted as \eqref{definition CADR assertion 2}, by using \eqref{comp dendrDer 2} and \eqref{comp dendrDer 3}, as follows:
	\begin{eqnarray*}
		\delta_{1}\circ\mu_{2}(x,y)+\delta_{2}\circ\mu_{1}(x,y)&=& \delta_{1}(x\prec_{2}y+ x\succ_{2}y)+\delta_{2}(x\prec_{1}y+ x\succ_{1}y)\\
		&=& \delta_{1}(x\prec_{2}y)+\delta_{1}( x\succ_{2}y)+\delta_{2}(x\prec_{1}y)+\delta_{2}( x\succ_{1}y)\\
		&=& \delta_{1}x\prec_{2}y+x\prec_{2}\delta_{1}y+\delta_{1}x\succ_{2}y+x\succ_{2}\delta_{1}y+\delta_{2}x\prec_{1}y\\ 
		&+& x\prec_{1}\delta_{2}y	 +\delta_{2}x\succ_{1}y+x_{1}\succ_{1}\delta_{2}y\\
		&=& (\delta_{1}x\prec_{2}y+\delta_{1}x\succ_{2}y)+(x\prec_{2}\delta_{1}y+x\succ_{2}\delta_{1}y)+(\delta_{2}x\prec_{1}y+\delta_{2}x\succ_{1} y)\\
		&+& (x\prec_{1}\delta_{2}y+x\succ_{1}\delta_{2}y)\\
		&=& \mu_{2}(\delta_{1}x,y)+\mu_{2}(x,\delta_{1}y)+\mu_{1}(\delta_{2}x,y)+\mu_{1}(x,\delta_{2}y).
	\end{eqnarray*}
	This complete the proof.
\end{proof}
Now, we define the \textbf{compatible pre-LieDer pair}. First recall that a pre-LieDer pair is a pre-Lie algebra, see \cite{baxter1960analytic} for more details, equipped with a derivation.
\begin{defn} \label{def comp pre-LieDer pairs}
	A \textbf{compatible pre-LieDer pair} $(L,\circ_{1},\circ_{2},\delta_{1},\delta_{2})$ where  $L$ is a vector space, $\circ_{1}$ and $\circ_{2}$  are two pre-Lie algebra structures on $L$  and $\delta_{1}$ and $\delta_{2}$ are two derivations on $L$ such that, for all $x,y,z \in L$
	\begin{equation}
		x\circ_{1}(y\circ_{2} z)+x\circ_{2}(y\circ_{1} z)-(x\circ_{2}y)\circ_{1}z - (x\circ_{1} y )\circ_{2} z= y\circ_{1}(x\circ_{2} z)+y\circ_{2}(x\circ_{1} z)
		-(y\circ_{2}x)\circ_{1}z - (y\circ_{1}x )\circ_{2} z,\label{eq compa pre-LieDer 1}
	\end{equation} 
	\begin{equation}
		\delta_{1}(x\circ_{2} y)+\delta_{2}(x\circ_{1} y)~~=~~\delta_1x\circ_2y+x\circ_2\delta_1y+\delta_2x\circ_1y+x\circ_1\delta_2y.\label{eq compa pre-LieDer 2}
	\end{equation}
	
\end{defn}	
\begin{prop}
	A $5$-uple $(L,\circ_{1},\circ_{2},\delta_{1},\delta_{2})$ is a  \textbf{compatible pre-LieDer pair} if and only if $ \circ_{1}$ and $\circ_{2}$ are pre-Lie structures on $L$, such that for all $k_{1},k_{2}\in\mathbb{K}$ the following operations, for $x,y\in L$
	\begin{align*}
		x\circ y&=k_{1}x\circ_{1}y+k_{2}x\circ_{2}y, \\
		\delta &=p_{1}\delta_{1}+p_{2}\delta_{2} .
	\end{align*}
	
	Defines a pre-Lie Der pair structure on $L$.
\end{prop}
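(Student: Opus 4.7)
The plan is to mimic closely the strategy of the analogous proposition for compatible LieDer pairs proved earlier in Subsection \ref{sec3}, exploiting the bilinearity of the pre-Lie product and the linearity of the derivation in the parameters $k_1,k_2,p_1,p_2$. For the forward implication, assume $(L,\circ_{1},\circ_{2},\delta_{1},\delta_{2})$ is a compatible pre-LieDer pair. I first need to check that $\circ$ is a pre-Lie product on $L$, i.e.\ that
\begin{equation*}
(x\circ y)\circ z-x\circ(y\circ z)=(y\circ x)\circ z-y\circ(x\circ z).
\end{equation*}
I would substitute $x\circ y=k_{1}x\circ_{1}y+k_{2}x\circ_{2}y$ and expand both sides; the coefficient of $k_{1}^{2}$ gives the pre-Lie identity for $\circ_{1}$, the coefficient of $k_{2}^{2}$ gives the pre-Lie identity for $\circ_{2}$, and the coefficient of $k_{1}k_{2}$ reduces precisely to the compatibility condition \eqref{eq compa pre-LieDer 1}, so the whole expression vanishes.

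Next I verify that $\delta=p_{1}\delta_{1}+p_{2}\delta_{2}$ is a derivation of $(L,\circ)$. Expanding $\delta(x\circ y)$ by bilinearity yields four terms
\begin{equation*}
p_{1}k_{1}\,\delta_{1}(x\circ_{1}y)+p_{2}k_{2}\,\delta_{2}(x\circ_{2}y)+p_{1}k_{2}\,\delta_{1}(x\circ_{2}y)+p_{2}k_{1}\,\delta_{2}(x\circ_{1}y).
\end{equation*}
The first two terms are handled by the derivation property of $\delta_{i}$ on $\circ_{i}$, which holds because $(L,\circ_{i},\delta_{i})$ is a pre-LieDer pair. The last two terms, grouped together as $p_{1}k_{2}\,\delta_{1}(x\circ_{2}y)+p_{2}k_{1}\,\delta_{2}(x\circ_{1}y)$, are exactly a scalar combination of the two sides of \eqref{eq compa pre-LieDer 2} (taking $x,y$ and scaling appropriately), and therefore rewrite as $k_{2}[\delta_{1}x\circ_{2}y+x\circ_{2}\delta_{1}y]+k_{1}[\delta_{2}x\circ_{1}y+x\circ_{1}\delta_{2}y]$ in a way that reassembles into $\delta x\circ y+x\circ\delta y$.

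For the converse, assume that for every choice of scalars $k_{1},k_{2},p_{1},p_{2}$ the pair $(L,\circ,\delta)$ defined above is a pre-LieDer pair. Setting $(k_{1},k_{2},p_{1},p_{2})=(1,0,1,0)$ recovers that $(L,\circ_{1},\delta_{1})$ is a pre-LieDer pair, and $(0,1,0,1)$ recovers $(L,\circ_{2},\delta_{2})$. Taking general $k_{1},k_{2}$ in the pre-Lie identity for $\circ$ and subtracting off the $k_{1}^{2}$ and $k_{2}^{2}$ parts yields that the $k_{1}k_{2}$ coefficient must vanish, which is exactly \eqref{eq compa pre-LieDer 1}. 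Similarly, comparing the $p_{1}k_{2}$ and $p_{2}k_{1}$ coefficients in the derivation identity for $\delta$ on $\circ$ (using that the $p_{i}k_{i}$ contributions cancel) produces \eqref{eq compa pre-LieDer 2}.

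The bookkeeping in the expansion of the pre-Lie identity is where the work actually is: there are sixteen terms on each side from the bilinear substitution, and they regroup into blocks according to the pairs of subscripts. The expected obstacle is the clean isolation of the $k_{1}k_{2}$ cross-terms on both sides of the pre-Lie identity and the verification that they collect into the left-hand side minus the right-hand side of \eqref{eq compa pre-LieDer 1}; apart from this careful accounting, the argument is a direct transcription of the LieDer proof, since the arguments used there depended only on the bilinear/linear structure of the two operations and not on the Jacobi identity itself.
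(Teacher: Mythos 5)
The paper states this proposition without any proof, so there is nothing to compare line by line; your proposal is in effect a transcription of the paper's proof of the analogous statement for compatible LieDer pairs, and for the pre-Lie identity itself your accounting is correct: the $k_1^2$ and $k_2^2$ blocks are the pre-Lie identities for $\circ_1$ and $\circ_2$, and the $k_1k_2$ block is exactly \eqref{eq compa pre-LieDer 1}. The converse direction is also fine, since extracting coefficients of the independent monomials $k_1^2$, $k_2^2$, $k_1k_2$, $p_ik_j$ recovers (indeed, slightly more than) the compatibility conditions.

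There is, however, a genuine gap in the derivation step of the forward implication. After expanding, the cross terms are $p_1k_2\,\delta_1(x\circ_2 y)+p_2k_1\,\delta_2(x\circ_1 y)$, and you claim these form ``a scalar combination of the two sides of \eqref{eq compa pre-LieDer 2}.'' They do not, unless $p_1k_2=p_2k_1$: equation \eqref{eq compa pre-LieDer 2} only controls the \emph{sum} $\delta_1(x\circ_2 y)+\delta_2(x\circ_1 y)$ with equal coefficients, not each summand separately. Writing $A=\delta_1(x\circ_2 y)-\delta_1x\circ_2y-x\circ_2\delta_1y$ and $B=\delta_2(x\circ_1 y)-\delta_2x\circ_1y-x\circ_1\delta_2y$, the hypothesis gives $A+B=0$, while what you need is $p_1k_2A+p_2k_1B=(p_1k_2-p_2k_1)A=0$ for all choices of scalars, which forces $A=B=0$ --- a strictly stronger condition than \eqref{eq compa pre-LieDer 2}. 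The same silent strengthening occurs in the paper's own proof of the LieDer analogue (where $\delta_1[x,y]_2=[\delta_1x,y]_2+[x,\delta_1y]_2$ is applied termwise), so you have faithfully reproduced the argument together with its weakness. To close the gap you must either take $p_i=k_i$ (so the cross coefficients coincide and the sum identity applies), restrict the quantifier accordingly, or verify that the definition of compatibility is intended to include the individual conditions $A=B=0$.
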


\begin{defn}
	Let $(L,\circ_{1},\circ_{2},\delta_{1},\delta_{2})$ and $(H,\circ^{'}_{1},\circ^{'}_{2},\delta^{'}_{1},\delta^{'}_{2})$ be two \textbf{compatible pre-LieDer pairs}. A homomorphism of \textbf{compatible pre-LieDer pairs}. $\varphi:L\longrightarrow H$ is both a pre-Lie Der pair homomorphism from $(L,\circ_{1},\delta_{1})$ to $(H,\circ^{'}_{1},\delta^{'}_{1})$  and  a pre-Lie Der pair homomorphism from $(L,\circ_{2},\delta_{2})$ to $(H,\circ^{'}_{2},\delta^{'}_{2})$.
\end{defn}
\begin{re}
	The previous definition means that, for $ i \in \{1,2\}$ and $x,y \in L$
	\begin{eqnarray}
		&&	\label{eq3.5}\varphi (x \circ_{\mathrm{i}} y)= \varphi(x)\circ'_{\mathrm{i}}\varphi(y), \\
		&& \label{eq3.6}\varphi\circ \delta_{\mathrm{i}}=\delta'_{\mathrm{i}}\circ\varphi.
	\end{eqnarray}
\end{re}

\begin{prop}
	Let be $(L,\prec_{1},\succ_{1},\prec_{2},\succ_{2},\delta_{1},\delta_{2})$ a compatible \textbf{dendriDer pair}. Define the bilinear maps:
	\begin{eqnarray*}
		\circ_{\mathrm{i}}&:& L\times L\longrightarrow L,\quad  \text{by}\\
		x\circ_{\mathrm{i}} y&=&x\succ_{\mathrm{i}} y -y\prec_{\mathrm{i}} x,\quad \text{for } i\in\{1,2\} \text{ and } x,y\in L.
	\end{eqnarray*}
	Then $(L,\circ_{1},\circ_{2},\delta_{1},\delta_{2})$ is a \textbf{compatible pre-LieDer pair}.
\end{prop}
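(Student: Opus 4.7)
The plan is to verify the three requirements of a \textbf{compatible pre-LieDer pair} step by step, leveraging the results already established in Section \ref{subsection 4.1} for single dendriDer pairs and the three axioms of Definition \ref{comp DendriDer pair}.

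First, I would invoke the earlier proposition that from a dendriDer pair $(L,\prec,\succ,\delta)$ one constructs a pre-LieDer pair $(L,\circ,\delta)$ via $x\circ y=x\succ y-y\prec x$. Applying this to each component individually shows that $(L,\circ_{1},\delta_{1})$ and $(L,\circ_{2},\delta_{2})$ are pre-LieDer pairs. This disposes of the ``single-structure'' requirements so that only the two compatibility identities \eqref{eq compa pre-LieDer 1} and \eqref{eq compa pre-LieDer 2} remain.

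For the derivation compatibility \eqref{eq compa pre-LieDer 2}, I would simply substitute the definition $x\circ_{i}y=x\succ_{i}y-y\prec_{i}x$ into $\delta_{1}(x\circ_{2}y)+\delta_{2}(x\circ_{1}y)$, split the expression into a $\succ$-part and a $\prec$-part, and then apply the two cross-derivation identities \eqref{comp dendrDer 2} and \eqref{comp dendrDer 3} from the compatible dendriDer definition. Regrouping the resulting eight terms as four binomials of the form $(a\succ_{i}b-b\prec_{i}a)$ reproduces exactly $\delta_{1}x\circ_{2}y+x\circ_{2}\delta_{1}y+\delta_{2}x\circ_{1}y+x\circ_{1}\delta_{2}y$. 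This step is essentially bookkeeping.

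The pre-Lie compatibility identity \eqref{eq compa pre-LieDer 1} is the main obstacle. Here I would expand each of the eight terms $x\circ_{1}(y\circ_{2}z)$, $x\circ_{2}(y\circ_{1}z)$, $(x\circ_{2}y)\circ_{1}z$, $(x\circ_{1}y)\circ_{2}z$ and their $x\leftrightarrow y$ swaps, producing a lengthy sum whose entries are triple dendriform products mixing the indices $1$ and $2$. The task is then to show that this sum vanishes using the three mixed axioms of Definition \ref{compatible dendri} (each relating a pair of terms involving $\prec_{i}\prec_{j}$, $\succ_{i}\prec_{j}$, or $\succ_{i}\succ_{j}$). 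The systematic approach is to group the expanded terms according to which of the three compatibility identities eliminates them; the skew-symmetric character of the commutator ensures that the pairs of ``swapped'' terms combine correctly. In view of the analogous non-compatible argument (one dendriform structure yields a pre-Lie structure), this mirrors the standard derivation but with crossed indices, so although tedious, the cancellation is forced by the three identities of a compatible dendriform algebra. This completes the verification.
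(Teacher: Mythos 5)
Your proposal is correct and follows essentially the same route as the paper: invoke the single-structure result that each $(L,\circ_{\mathrm{i}},\delta_{\mathrm{i}})$ is a pre-LieDer pair, verify \eqref{eq compa pre-LieDer 2} by substituting $x\circ_{\mathrm{i}}y=x\succ_{\mathrm{i}}y-y\prec_{\mathrm{i}}x$ and applying \eqref{comp dendrDer 2}--\eqref{comp dendrDer 3}, and establish \eqref{eq compa pre-LieDer 1} by expanding and cancelling via the three mixed identities of Definition \ref{compatible dendri}. If anything, you are more explicit than the paper, which dismisses the first identity as ``straightforward to confirm'' while you at least outline the grouping that forces the cancellation.
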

\begin{proof}
	In the previous discussion, we established that both $(L,\circ_{1},\delta_{1})$ and $(L,\circ_{2},\delta_{2})$ are indeed pre-LieDer pairs. Our next objective is to verify the two identities laid out in Definition \eqref{def comp pre-LieDer pairs}.
	
	The first identity is straightforward to confirm. Therefore, let's concentrate on proving the second identity. Consider for all $x,y\in L$. By using \eqref{comp dendrDer 2} and \eqref{comp dendrDer 3}, we can demonstrate that:
	\begin{eqnarray*}
		\delta_{1}(x\circ_{2} y)+\delta_{2}(x\circ_{1} y)&=& \delta_{1}(x\succ_{2}-y\prec_{2}x)+ \delta_{2}(x\succ_{1} y -y\prec_{1}x)\\
		&=& \delta_{1}x\succ_{2}y+x\succ_{2}\delta_{1}y-\delta_{1}y\prec_{2}x-y\prec_{2}\delta_{1}x+\delta_{2}x\succ_{1}y\\
		&+&x\succ_{1}\delta_{2}y-\delta_{2}y\prec_{1}x-y\prec_{1}\delta_{2}x\\
		&=& \delta_{1}x\circ_{2}y+x\circ_{2}\delta_{1}y+\delta_{2}x\circ_{1}y+x\circ_{1}\delta_{2}y.
	\end{eqnarray*}
	This complete the proof.
\end{proof}


In the following proposition, we investigate the connection between compatible LieDer and pre-LieDer pairs through the commutator, defined as $[x,y]_c = x \circ y - y \circ x$. This exploration serves as a generalization of the following diagram:
\begin{center}
	$\xymatrix{
		\textbf{\emph{pre-LieDer pairs }}&\underrightarrow{ commutator} &\textbf{\emph{LieDer pairs}}
	}$
\end{center}	
\begin{prop}
	Let $(L,\circ_{1},\circ_{2},\delta_{1},\delta_{2})$ be a \textbf{compatible pre-LieDer pair}. Define for all $x,y \in L$
	\begin{align*}
		[x,y]_{c,1}&=x\circ_{1}y-y\circ_{1}x, \\
		[x,y]_{c,2}&=x\circ_{2}y-y\circ_{2}x.
	\end{align*}
	Then $(L,[\cdot,\cdot]_{c,1},[\cdot,\cdot]_{c,2},\delta_{1},\delta_{2})$ is a \textbf{compatible LieDer pair}.
\end{prop}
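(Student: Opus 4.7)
The plan is to verify the three defining conditions of a compatible LieDer pair in order. First, I would check that $(L,[\cdot,\cdot]_{c,1},\delta_1)$ and $(L,[\cdot,\cdot]_{c,2},\delta_2)$ are each LieDer pairs. The skew-symmetry of $[\cdot,\cdot]_{c,i}$ is immediate from the definition, the Jacobi identity for each bracket is the classical fact that the commutator of a pre-Lie product is a Lie bracket (this is just the sub-adjacent Lie algebra recalled in Section 2), and the derivation property $\delta_i([x,y]_{c,i})=[\delta_ix,y]_{c,i}+[x,\delta_iy]_{c,i}$ follows directly by expanding the commutator and applying the derivation property of $\delta_i$ on $\circ_i$ twice.

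Next, I would verify the compatibility Jacobi identity \eqref{Def CLDP 1}, namely
\begin{equation*}
\circlearrowleft_{x,y,z}[x,[y,z]_{c,1}]_{c,2}+\circlearrowleft_{x,y,z}[x,[y,z]_{c,2}]_{c,1}=0.
\end{equation*}
The approach is to expand each commutator twice in terms of $\circ_1$ and $\circ_2$ and group the terms according to the position of $x,y,z$ and to which product appears on the outside. The mixed pre-Lie identity \eqref{eq compa pre-LieDer 1} is exactly the statement that the ``mixed associator'' $(x\circ_2y)\circ_1z+(x\circ_1y)\circ_2z-x\circ_1(y\circ_2z)-x\circ_2(y\circ_1z)$ is symmetric in $x,y$. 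Using this twelve times — once for each of the six cyclic-commutator terms expanded in each of its two orderings — all the resulting expressions cancel, in the same spirit as the classical derivation of the Jacobi identity from the pre-Lie identity.

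Finally, for the derivation compatibility \eqref{Def CLDP 2}, I would compute
\begin{equation*}
\delta_1([x,y]_{c,2})+\delta_2([x,y]_{c,1})=\delta_1(x\circ_2y-y\circ_2x)+\delta_2(x\circ_1y-y\circ_1x),
\end{equation*}
regroup the four resulting terms as $\bigl(\delta_1(x\circ_2y)+\delta_2(x\circ_1y)\bigr)-\bigl(\delta_1(y\circ_2x)+\delta_2(y\circ_1x)\bigr)$, and apply \eqref{eq compa pre-LieDer 2} to each grouped pair. Re-collecting the resulting eight terms into commutators $[\cdot,\cdot]_{c,1}$ and $[\cdot,\cdot]_{c,2}$ yields exactly $[\delta_1x,y]_{c,2}+[x,\delta_1y]_{c,2}+[\delta_2x,y]_{c,1}+[x,\delta_2y]_{c,1}$, as required.

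The main obstacle will be the bookkeeping in the second step: expanding $\circlearrowleft_{x,y,z}[x,[y,z]_{c,1}]_{c,2}+\circlearrowleft_{x,y,z}[x,[y,z]_{c,2}]_{c,1}$ produces twenty-four terms whose cancellation requires carefully matching each term against the appropriate symmetrized instance of \eqref{eq compa pre-LieDer 1}. The steps involving skew-symmetry, the derivation property, and the commutator of the bracket are routine; the combinatorial cancellation in the Jacobi compatibility is where real attention is needed.
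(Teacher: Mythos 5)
Your proposal is correct and follows essentially the same route as the paper's proof: verify skew-symmetry, invoke the sub-adjacent Lie algebra fact for each Jacobi identity, expand the six cyclic commutator terms and cancel them via the compatible pre-Lie identity \eqref{eq compa pre-LieDer 1}, and obtain the derivation compatibility by regrouping and applying \eqref{eq compa pre-LieDer 2}. The only cosmetic difference is that you phrase the key cancellation as symmetry of the mixed associator in $x,y$, which is a cleaner way to organize the twenty-four-term bookkeeping than the paper's direct listing, but it is the same argument.
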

\begin{proof}
	Let be $x,y,z\in L,$
	\begin{align*}
		[x,y]_{c,1}&=x\circ_{1}y-y\circ_{1}x=-\left(y\circ_{1}x- x\circ_{1}y\right)=-[y,x]_{c,1},\\
		[x,y]_{c,2}&=x\circ_{2}y-y\circ_{2}x=-\left(y\circ_{2}x- x\circ_{2}y\right)=-[y,x]_{c,2}.
	\end{align*}
	
	which mean the skey symmetry of both $[\cdot,\cdot]_{c,1}$ and $[\cdot,\cdot]_{c,2}$ is satisfied\\
	It is easy to prove that, for all $i\in\{1,2\}$\\ $\circlearrowleft_{x,y,z}[x,[y,z]_{c,i}]_{c,i}=0$.\\
	Also
	\begin{align*}
		\delta_{i}\left( [x,y]_{c,i}\right)&= \delta_{i}\left( x\circ_{i}y-y\circ_{i}x\right)\\
		&= \delta_{i} x\circ_{i} y +x\circ_{i}\delta_{i} y -\delta_{i} y \circ_{i} x - y \circ_{i}\delta_{i} x,  ( \text{since } \delta_{i}  \text{ is a  derivation  on } (L,\circ_{i}))\\
		&= \left( \delta_{i}x\circ_{i}y-y\circ_{i}\delta_{i}x\right)+ \left( x\circ_{i}\delta_{i}y-\delta_{i}y\circ_{i}x\right)\\
		&= [\delta_{i}x,y]_{c,i}+[x,\delta_{i}]_{c,i}.
	\end{align*}
	which mean $(L,[\cdot,\cdot]_{c,i},\delta_{i})$ are two LieDer pairs, for $i \in \{1,2\}$. \\
	Let's now  focus on proving that
	\begin{center}
		$\circlearrowleft_{x,y,z}[x,[y,z]_{c,1}]_{c,2}+\circlearrowleft_{x,y,z}[x,[y,z]_{c,2}]_{c,1}=0$.
	\end{center}
	\begin{eqnarray*}	
		[x,[y,z]_{c,1}]_{c,2}&=& x\circ_{2}[y,z]_{c,1}-[y,z]_{c,1}\circ_{2}x\\
		&=& x\circ_{2}(y\circ_{1}z)-x\circ_{2}(z\circ_{1}y)-(y\circ_{1}z)\circ_{2}x+(z\circ_{1}y)\circ_{2}x.
	\end{eqnarray*} 
	Similarly 
	\begin{align*}
		[y,[z,x]_{c,1}]_{c,2}&=y\circ_{2}(z\circ_{1}x)-y\circ_{2}(x\circ_{1}z)-(z\circ_{1}x)\circ_{2}y+(x\circ_{1}z)\circ_{2}y,\\
		[z,[x,y]_{c,1}]_{c,2}&=z\circ_{2}(x\circ_{1}y)-z\circ_{2}(y\circ_{1}x)-(x\circ_{1}y)\circ_{2}z+(y\circ_{1}x)\circ_{2}z,\\
		[x,[y,z]_{c,2}]_{c,1}&=x\circ_{1}(y\circ_{2}z)-x\circ_{1}(z\circ_{2}y)-(y\circ_{2}z)\circ_{1}x+(z\circ_{2}y)\circ_{1}x, \\
		[y,[z,x]_{c,2}]_{c,1}&=y\circ_{1}(z\circ_{2}x)-y\circ_{1}(x\circ_{2}z)-(z\circ_{2}x)\circ_{1}y+(x\circ_{2}z)\circ_{1}y,\\
		[z,[x,y]_{c,2}]_{c,1}&=z\circ_{1}(x\circ_{2}y)-z\circ_{1}(y\circ_{2}x)-(x\circ_{2}y)\circ_{1}z+(y\circ_{2}x)\circ_{1}z.
	\end{align*}
	
	Now, by using the fact that $(L,\circ_{1},\circ_{2},\delta_{1},\delta_{2})$ is \textbf{compatible pre-LieDer pair} we obtain
	\begin{equation*}
		\circlearrowleft_{x,y,z}[x,[y,z]_{c,1}]_{c,2}+\circlearrowleft_{x,y,z}[x,[y,z]_{c,2}]_{c,1}=0.
	\end{equation*}
	on the other hand
	\begin{align*}
		\delta_1[x,y]_{c,2}+\delta_2[x,y]_{c,1}&=\delta_1(x\circ_2y-y\circ_2x)+\delta_2(x\circ_1y-y\circ_1x) \\
		&=(\delta_1(x\circ_2y)+\delta_2(x\circ_1y))-(\delta_1(y\circ_2x)+\delta_2(y\circ_1x)) \\
		&\overset{\eqref{eq compa pre-LieDer 2}}{=}\delta_1x\circ_2y+x\circ_2\delta_1y+\delta_2x\circ_1y+x\circ_1\delta_2y-\delta_1y\circ_2x-y\circ_2\delta_1x-\delta_2y\circ_1x-y\circ_1\delta_2x \\
		&=[\delta_1x,y]_{c,2}+[x,\delta_1y]_{c,2}+[\delta_2x,y]_{c,1}+[x,\delta_2y]_{c,1}
	\end{align*}
	This complete the proof.
\end{proof}
So we obtain a new diagram as follow
\begin{center}
	$\xymatrix{
		\textbf{\emph{compatible pre-lieDer pairs }}&\underrightarrow{ commutator} &\textbf{\emph{compatible LieDer pairs}}
	}$
\end{center}
In the next proposition, we examine the passage from \textbf{compatible LieDer pairs} to \textbf{compatible pre-LieDer pairs} by using the Rota-Baxter operator with weight $\lambda = 0$. This process extends the following diagram:
\begin{center}
	$\xymatrix{
		\textbf{\emph{LieDer pairs }}&\underrightarrow{ R.B.O} &\textbf{\emph{pre-LieDer pairs}}
	}$
\end{center}
\begin{prop}
	Let $(L,[\cdot,\cdot]_{1},[\cdot,\cdot]_{2},\delta_{1},\delta_{2})$ be a \textbf{compatible LieDer pairs} and  $R: L\rightarrow L $ be a Rota Baxter operator of weight $\lambda=0$, such that for $i \in \{1,2\} \text{ and } x,y \in L$
	\begin{eqnarray}
		R\circ\delta_{\mathrm{i}}&=&\delta_{\mathrm{i}}\circ R, \label{eq3.7} \\
		x\circ_{\mathrm{i}} y &=& [Rx,y]_{\mathrm{i}}. \label{eq3.8}
	\end{eqnarray}  
	Then $(L,\circ_{1},\circ_{2},\delta_{1},\delta_{2})$ is a \textbf{compatible pre-LieDer pairs}. 
\end{prop}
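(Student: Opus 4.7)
The strategy is to verify the three defining conditions of a compatible pre-LieDer pair in sequence: first, that each triple $(L,\circ_i,\delta_i)$ is itself a pre-LieDer pair; second, the pre-Lie compatibility identity \eqref{eq compa pre-LieDer 1}; and third, the derivation compatibility identity \eqref{eq compa pre-LieDer 2}. The key algebraic input will be the weight-zero Rota-Baxter identity $[Rx,Ry]_i = R\bigl([Rx,y]_i + [x,Ry]_i\bigr)$ for $i=1,2$, together with the hypothesis $R\circ\delta_i=\delta_i\circ R$ and the compatibility of the two LieDer pairs.

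First I would check that each $(L,\circ_i,\delta_i)$ is a pre-LieDer pair. The pre-Lie identity for $x\circ_i y = [Rx,y]_i$ is the classical result of an-Bai and others: expanding $(x\circ_i y)\circ_i z - x\circ_i(y\circ_i z)$ in Lie brackets and using the Rota-Baxter relation together with the Jacobi identity produces a symmetric expression in $x$ and $y$, hence pre-Lie. To see $\delta_i$ is a derivation of $\circ_i$, I would compute
\begin{align*}
\delta_i(x\circ_i y) &= \delta_i[Rx,y]_i = [\delta_i Rx, y]_i + [Rx,\delta_i y]_i \\
&= [R\delta_i x, y]_i + [Rx,\delta_i y]_i = \delta_i x \circ_i y + x\circ_i \delta_i y,
\end{align*}
using $\delta_i R = R\delta_i$ and the fact that $\delta_i$ is a derivation of $[\cdot,\cdot]_i$.

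Next I would tackle the pre-Lie compatibility \eqref{eq compa pre-LieDer 1}. Writing every term $x\circ_i(y\circ_j z)$ and $(x\circ_j y)\circ_i z$ as an iterated Lie bracket of the form $[R\cdot,[R\cdot,\cdot]_j]_i$, the identity to be shown becomes a statement purely about iterated brackets of the two Lie algebras with inputs of the form $Rx, Ry, z$. Using the Rota-Baxter relation with weight zero to rewrite $R[Rx,y]_j + R[x,Ry]_j = [Rx,Ry]_j$ wherever an $R$ hits an inner bracket, the left-hand side and right-hand side should both reduce to expressions that match exactly when the compatibility identity \eqref{Def CLDP 1} of the two Lie brackets is applied to the triple $(Rx, Ry, z)$. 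This is the step I expect to be the main obstacle: organizing the expansion so that the mixed cyclic sum of Lie brackets produced is precisely $\circlearrowleft[[Rx,Ry]_1,z]_2 + \circlearrowleft[[Rx,Ry]_2,z]_1 = 0$, which vanishes by the compatibility hypothesis.

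Finally, the derivation compatibility \eqref{eq compa pre-LieDer 2} should follow by a short computation:
\begin{align*}
\delta_1(x\circ_2 y) + \delta_2(x\circ_1 y) &= \delta_1[Rx,y]_2 + \delta_2[Rx,y]_1 \\
&= [\delta_1 Rx,y]_2 + [Rx,\delta_1 y]_2 + [\delta_2 Rx,y]_1 + [Rx,\delta_2 y]_1 \\
&= [R\delta_1 x,y]_2 + [Rx,\delta_1 y]_2 + [R\delta_2 x,y]_1 + [Rx,\delta_2 y]_1 \\
&= \delta_1 x\circ_2 y + x\circ_2 \delta_1 y + \delta_2 x\circ_1 y + x\circ_1 \delta_2 y,
\end{align*}
where the second equality uses \eqref{Def CLDP 2} applied to the arguments $(Rx,y)$, and the third uses $R\circ\delta_i = \delta_i\circ R$. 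With these three verifications in place, the quintuple $(L,\circ_1,\circ_2,\delta_1,\delta_2)$ satisfies Definition \ref{def comp pre-LieDer pairs}, completing the proof.
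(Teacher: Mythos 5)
Your proposal is correct and follows essentially the same route as the paper: each $(L,\circ_{\mathrm{i}},\delta_{\mathrm{i}})$ is a pre-LieDer pair via the classical Rota--Baxter construction together with $R\circ\delta_{\mathrm{i}}=\delta_{\mathrm{i}}\circ R$, the mixed pre-Lie identity reduces to the Lie compatibility \eqref{Def CLDP 1} evaluated at $(Rx,Ry,z)$ combined with the weight-zero Rota--Baxter relation $[Rx,Ry]_{\mathrm{i}}-R[Rx,y]_{\mathrm{i}}=R[x,Ry]_{\mathrm{i}}=-R[Ry,x]_{\mathrm{i}}$, and the derivation compatibility is exactly the short computation you give. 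The step you flag as the main obstacle is precisely the paper's second step and goes through as you describe, with no hidden difficulty.
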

\begin{proof}
	First, let's prove that $(L,\circ_{1},\delta_{1})$ (repectively $(L,\circ_{2},\delta_{2})$) is a pre-Lie Der pair. It's evident that $(L,\circ_{1})$ (respectively $(L,\circ_{2})$) forms a pre-Lie algebra. Consequently, our objective is to confirm that $\delta_{\mathrm{i}}$ is a derivation on $(L,\circ_{\mathrm{i}})$, where $i\in\{1,2\}$.
	\begin{align*}
		\delta_{\mathrm{i}}(x\circ_{\mathrm{i}}y)&= \delta_{\mathrm{i}}[Rx,y]_{\mathrm{i}}\\
		&=  [\delta_{\mathrm{i}} \circ Rx,y]_{\mathrm{i}} + [Rx,\delta_{\mathrm{i}}y]_{\mathrm{i}}\\
		&\overset{\eqref{eq3.7}}{=}  [R\circ\delta_{\mathrm{i}} x,y]_{\mathrm{i}}+[R x, \delta_{\mathrm{i}}y]_{\mathrm{i}}\\
		&= \delta_{\mathrm{i}}x\circ_{\mathrm{i}}y+x\circ_{\mathrm{i}}\delta_{\mathrm{i}} y.
	\end{align*}
	Which mean $(L,\circ_{1},\delta_{1})$ is a pre-Lie Der pair (resp $(L,\circ_{2},\delta_{2})$ is a pre-Lie Der pair).\\
	Second step,
	\begin{align*}
		x\circ_{1}(y\circ_{2}z)&=[Rx,y\circ_{2}z]_{1}=[Rx,[Ry,z]_{2}]_{1}\\
		&\text{Similarly}\\
		x\circ_{2}(y\circ_{1}z)&=[Rx,[Ry,z]_{1}]_{2}\\
		(x\circ_{2}y)\circ_{1}z&=[R[Rx,y]_{2},z]_{1}\\
		(x\circ_{1}y)\circ_{2}z&=[R[Rx,y]_{1},z]_{2}\\
	\end{align*}
	
	So
	\begin{align*}
		&x\circ_{1}(y\circ_{2}z)+x\circ_{2}(y\circ_{1}z)-(x\circ_{2}y)\circ_{1}z-(x\circ_{1}y)\circ_{2}z\\ &=[Rx,[Ry,z]_{2}]_{1}+[Rx,[Ry,z]_{1}]_{2}-[R[Rx,y]_{2},z]_{1} - [R[Rx,y]_{1},z]_{2}\\
		&=[[Rx,Ry]_{2},z]_{1}+[Ry,[Rx,z]_{2}]_{1}+[[Rx,Ry]_{1},z]_{2}+[Ry,[Rx,z]_{1}]_{2}
		-[R[Rx,y]_{2},z]_{1}-[R[Rx,y]_{1},z]_{2}\\
		&=[[Rx,Ry]_{2}-R[Rx,y]_{2},z]_{1}
		+ [[Rx,Ry]_{1}-R[Rx,y]_{1},z]_{2}
		+[Ry,[Rx,z]_{2}]_{1}+ [Ry,[Rx,z]_{1}]_{2}\\
		&=-[R[Ry,x]_{2},z]_{1}-[R[Ry,x]_{1},z]_{2}
		+ [Ry,[Rx,z]_{2}]_{1}+[Ry,[Rx,z]_{1}]_{2}\\
		&= y\circ_{1}(x\circ_{2}z)+y\circ_{2}(x\circ_{1}z)-(y\circ_{2}x)\circ_{1}z
		-(y\circ_{1}x)\circ_{2}z.
	\end{align*}
	
	Third step, 
	\begin{align*}
		\delta_1(x\circ_2y)+\delta_2(x\circ_2y)&=\delta_1[Rx,y]_2+\delta_2[Rx,y]_1\\
		&\overset{\eqref{Def CLDP 1}}{=}[\delta_1\circ Rx,y]_2+[Rx,\delta_1y]_2+[\delta_2\circ Rx,y]_1+[Rx,\delta_2y]_1 \\
		&\overset{\eqref{eq3.7}}{=} [R\circ \delta_1x,y]_2+[Rx,\delta_1y]_2+[R\circ \delta_2x,y]_1+[Rx,\delta_2y]_1 \\
		&=\delta_1x\circ_2y+x\circ_2\delta_1y+\delta_2x\circ_1y+x\circ_1\delta_2y
	\end{align*}
	This complet the proof.
\end{proof}
\begin{prop} \label{comp zinDer to comp AssDer}
	Let be $(L,\ast_{1},\ast_{2},\delta_{1},\delta_{2})$ a \textbf{compatible zinbiel Der pair}. define two bilinear maps:
	\begin{eqnarray*}
		\mu_{\mathrm{i}}&:& L\times L \longrightarrow L \quad \text{ by} \\
		&&\mu_{\mathrm{i}}(x,y)=x\ast_{\mathrm{i}}y+y\ast_{\mathrm{i}}x,\quad  \forall i\in\{1,2\}\text{ and } \forall x,y\in L.
	\end{eqnarray*}
	Then $(L,\mu_{1},\mu_{2},\delta_{1},\delta_{2})$  is a \textbf{compatibles AssDer pair}.
\end{prop}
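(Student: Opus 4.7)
The plan is to reduce to the single-structure result in Proposition~\ref{prop zinDer to AssDer pair} and to mirror the strategy of the analogous Proposition~\ref{prop3.5}. Three facts need verification: (i) each $(L,\mu_i,\delta_i)$ is an AssDer pair; (ii) the pair $(L,\mu_1,\mu_2)$ satisfies the associative compatibility \eqref{comp associative algebras}; and (iii) the derivations $\delta_1,\delta_2$ satisfy \eqref{definition CADR assertion 2}. Item (i) is immediate from Proposition~\ref{prop zinDer to AssDer pair} applied separately to each zinDer pair $(L,\ast_i,\delta_i)$.

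For (ii) and (iii) simultaneously, my preferred route is to first show that $(L,\ast_1+\ast_2,\delta_1+\delta_2)$ is itself a zinDer pair. Expanding the zinbiel identity for $\ast_1+\ast_2$ yields four ``pure'' terms (involving only $\ast_1$ or only $\ast_2$) that collapse by the individual zinbiel identities for $\ast_1$ and $\ast_2$, plus four ``mixed'' terms that cancel by \eqref{comp zinbiel}. Similarly, the Leibniz rule for $\delta_1+\delta_2$ acting on $\ast_1+\ast_2$ splits into pure pieces (settled by the individual derivation axioms) and mixed pieces (settled by \eqref{compatibility derivation zinbiel}). Applying Proposition~\ref{prop zinDer to AssDer pair} to this aggregated zinDer pair then gives an AssDer pair structure $(L,\nu,\delta_1+\delta_2)$, where $\nu(x,y)=x(\ast_1+\ast_2)y+y(\ast_1+\ast_2)x$ coincides term-by-term with $\mu_1+\mu_2$. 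Expanding the associativity of $\mu_1+\mu_2$ and subtracting the associativity of each $\mu_i$ (already known from (i)) leaves exactly \eqref{comp associative algebras}; analogously, expanding the Leibniz rule for $\delta_1+\delta_2$ under $\mu_1+\mu_2$ and subtracting the individual Leibniz identities produces \eqref{definition CADR assertion 2}.

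A direct verification is also available if preferred: for (ii), expand both sides of \eqref{comp associative algebras} into $16$ mixed zinbiel terms and match them using \eqref{comp zinbiel} applied with cyclic and transposed permutations of $(x,y,z)$; for (iii), expand $\delta_1(\mu_2(x,y))+\delta_2(\mu_1(x,y))$ via the definition of $\mu_i$, regroup the four summands into the pairs $\bigl[\delta_1(x\ast_2 y)+\delta_2(x\ast_1 y)\bigr]+\bigl[\delta_1(y\ast_2 x)+\delta_2(y\ast_1 x)\bigr]$, apply \eqref{compatibility derivation zinbiel} to each bracket, and reassemble the resulting eight terms symmetrically into $\mu_2(\delta_1 x,y)+\mu_2(x,\delta_1 y)+\mu_1(\delta_2 x,y)+\mu_1(x,\delta_2 y)$.

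The main obstacle is step (ii): the direct bookkeeping is long and error-prone, while the ``pass to the sum'' route sidesteps it by reducing everything to the already-established single-operation Proposition~\ref{prop zinDer to AssDer pair}. Step (iii) is essentially symmetric rearrangement under the Leibniz rule and should pose no genuine difficulty via either route.
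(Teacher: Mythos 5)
Your proposal is correct, but your preferred route differs from the paper's. The paper proves the statement by direct computation: it first invokes Proposition~\ref{prop zinDer to AssDer pair} to get that each $(L,\mu_i,\delta_i)$ is an AssDer pair (as you do), and then verifies the two mixed conditions \eqref{comp associative algebras} and \eqref{definition CADR assertion 2} term by term. For the associativity compatibility it isolates, as a separate auxiliary proposition, the symmetry identity \eqref{eq*}, namely $x\ast_{1}(z\ast_{2}y)+x\ast_{2}(z\ast_{1}y)=z\ast_{1}(x\ast_{2}y)+z\ast_{2}(x\ast_{1}y)$, which follows from applying \eqref{comp zinbiel} and swapping $x$ and $z$; combining this with \eqref{comp zinbiel} matches the eight terms on each side. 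Your ``pass to the sum'' argument instead shows that $(L,\ast_1+\ast_2,\delta_1+\delta_2)$ is a zinDer pair (pure terms by the individual axioms, mixed terms by \eqref{comp zinbiel} and \eqref{compatibility derivation zinbiel}), pushes it through Proposition~\ref{prop zinDer to AssDer pair}, and recovers \eqref{comp associative algebras} and \eqref{definition CADR assertion 2} as the cross-terms left after subtracting the pure identities; this is legitimate because those compatibility conditions are exactly the bilinear cross-terms of the corresponding identities for the sums (a reformulation the paper itself records in its remarks). What your route buys is the elimination of the bookkeeping and of the auxiliary lemma \eqref{eq*}; what the paper's route buys is an explicit, self-contained verification. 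Your fallback ``direct verification'' is essentially the paper's proof, except that you do not name the specific permutation identity \eqref{eq*} that makes the sixteen-term matching work, so if you were to write that version out you would need to derive it.
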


We need the folowing proposition in the proof of the proposition \eqref{comp zinDer to comp AssDer}.
\begin{prop}
	Let $(L,\ast_{1},\ast_{2},\delta_{1},\delta_{2})$ be a \textbf{compatible zinbiel Der pair}. Then the following equation holds
	\begin{equation} \label{eq*}
		x\ast_{1}(z \ast_{2} y)+x\ast_{2}(z \ast_{1}y)=z\ast_{1}(x \ast_{2} y)+z\ast_{2}(x \ast_{1}y), \ \ \ \forall x,y,z \in L
	\end{equation}
\end{prop}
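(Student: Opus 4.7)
The plan is to derive the desired identity purely from the compatibility axiom \eqref{comp zinbiel} by applying it twice, with appropriate re-labellings of the three variables, and comparing the two resulting expansions. No use of the derivations $\delta_1,\delta_2$ or of the individual zinbiel axioms should be needed; the identity lives at the level of the underlying compatible zinbiel structure.

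First I would take the compatibility equation
\begin{equation*}
a\ast_1(b\ast_2 c)+a\ast_2(b\ast_1 c)=(a\ast_2 b)\ast_1 c+(a\ast_1 b)\ast_2 c+(b\ast_2 a)\ast_1 c+(b\ast_1 a)\ast_2 c
\end{equation*}
and specialize it with $(a,b,c)=(x,z,y)$, which rewrites the left-hand side of \eqref{eq*} as
\begin{equation*}
x\ast_1(z\ast_2 y)+x\ast_2(z\ast_1 y)=(x\ast_2 z)\ast_1 y+(x\ast_1 z)\ast_2 y+(z\ast_2 x)\ast_1 y+(z\ast_1 x)\ast_2 y.
\end{equation*}

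Next I would specialize the same compatibility equation with $(a,b,c)=(z,x,y)$, which produces
\begin{equation*}
z\ast_1(x\ast_2 y)+z\ast_2(x\ast_1 y)=(z\ast_2 x)\ast_1 y+(z\ast_1 x)\ast_2 y+(x\ast_2 z)\ast_1 y+(x\ast_1 z)\ast_2 y.
\end{equation*}
The right-hand sides of the two displayed equalities are term-by-term identical (they consist of the four expressions $(x\ast_2 z)\ast_1 y$, $(x\ast_1 z)\ast_2 y$, $(z\ast_2 x)\ast_1 y$, $(z\ast_1 x)\ast_2 y$), so \eqref{eq*} follows immediately.

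There is essentially no obstacle here: the only subtle point is picking the correct substitution in each application of \eqref{comp zinbiel}, noting that the right-hand side of the compatibility axiom is symmetric in the first two arguments, which is precisely why swapping the roles of $x$ and $z$ in the outer position leaves the expansion invariant and yields the claimed symmetry.
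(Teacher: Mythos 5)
Your proof is correct and follows essentially the same route as the paper's: both expand the left-hand side via the compatibility axiom with outer element $x$, observe that the resulting four terms are symmetric under swapping $x$ and $z$, and then recognize that sum as the expansion of the right-hand side with outer element $z$. No gaps.
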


\begin{proof}
	Let $x,y,z \in L$ 
	\begin{align*}
		x\ast_{1}(z \ast_{2} y)+x\ast_{2}(z \ast_{1}y)&=(x\ast_{1}z)\ast_{2}y+(x\ast_{2}z)\ast_{1}y+(z\ast_{1}x)\ast_{2}y+(z\ast_{2}x)\ast_{1}y \\
		&=(z\ast_{1}x)\ast_{2}y+(z\ast_{2}x)\ast_{1}y+(x\ast_{1}z)\ast_{2}y+(x\ast_{2}z)\ast_{1}y \\
		&=z\ast_{1}(x \ast_{2} y)+z\ast_{2}(x \ast_{1}y)
	\end{align*}
\end{proof}
\begin{re}
	The equation \eqref{eq*} still true in the case of compatible zinbiel algebras too $(L,\ast_{1},\ast_{2})$.
\end{re}
\begin{proof}
	By proposition \eqref{prop zinDer to AssDer pair} we proved that $(L,\mu_1,\delta_1)$ and $(L,\mu_2,\delta_2)$ are two AssDer pairs. So by definition \eqref{compa AssDer pairs} we need just to prove that $ \forall x,y,z\in L$
	\begin{description}
		\item[(i)] $\mu_{2}(x,\mu_{1}(y,z))+\mu_{1}(x,\mu_{2}(y,z))=\mu_{2}(\mu_{1}(x,y),z)+\mu_{1}(\mu_{2}(x,y),z).$
		\item[(ii)] $\delta_{i}(\mu_{j}(x,y))=\mu_{j}(\delta_{i}x,y)+\mu_{j}(x,\delta_{i}y)$
	\end{description}
	We begin by the first assertion,
	\begin{align*}
		\mu_2(x,\mu_1(y,z))+\mu_1(x,\mu_2(y,z))&=x \ast_{2}(y\ast_{1}z)+x\ast_{2}(z\ast_{1}y)+(y\ast_{1}z)\ast_{2}x+(z\ast_{1}y)\ast_{2}x \\
		&+x \ast_{1}(y\ast_{2}z)+x\ast_{1}(z\ast_{2}y)+(y\ast_{2}z)\ast_{1}x+(z\ast_{2}y)\ast_{1}x
	\end{align*}
	By definition of \textbf{compatible zinbiel Der pair} we have 
	\begin{align*}
		x\ast_2(y\ast_1z)+x\ast_1(y\ast_2z)&=(x\ast_1y)\ast_2z+(y\ast_1x)\ast_2z+(x\ast_2y)\ast_1z+(y\ast_2x)\ast_1z \\
		z\ast_2(y\ast_1x)+z\ast_1(y\ast_2x)&=(y\ast_1z)\ast_2x+(z\ast_1y)\ast_2x+(y\ast_2z)\ast_1x+(z\ast_2y)\ast_1x
	\end{align*}
	and using the  equation \eqref{eq*} we obtain 
	\begin{align*}
		\mu_2(x,\mu_1(y,z))+\mu_1(x,\mu_2(y,z))&=(x\ast_1y)\ast_2z+(y\ast_1x)\ast_2z+(x\ast_2y)\ast_1z+(y\ast_2x)\ast_1z \\
		&+z\ast_2(y\ast_1x)+z\ast_1(y\ast_2x)+z\ast_{1}(x \ast_{2} y)+z\ast_{2}(x \ast_{1}y) \\
		&=\mu_2(\mu_1(x,y),z)+\mu_1(\mu_2(x,y),z)
	\end{align*}
	For the second assertion 
	\begin{align*}
		\delta_1\circ \mu_2(x,y)+\delta_2\circ \mu_1(x,y)&=\delta_1(x\ast_2y+y\ast_2x)+\delta_2(x\ast_1y+y\ast_1x) \\
		&=\delta_1(x\ast_2y)+\delta(y\ast_2x)+\delta_2(x\ast_1y)+\delta_2(y\ast_1x) \\
		&\text{ by definition of compatible zinbiel Der pair } \\
		&=\delta_1x\ast_2y+x\ast_2\delta_1y+\delta_2x\ast_1y+x\ast_1\delta_2y \\
		&+\delta_1y\ast_2x+y\ast_2\delta_1x+\delta_2y\ast_1x+y\ast_1\delta_2x \\
		&=(\delta_1x\ast_2y+y\ast_2\delta_1x)+(x\ast_2\delta_1y+\delta_1y\ast_2x) \\
		&+(\delta_2x\ast_1y+y\ast_1\delta_2x)+(x\ast_1\delta_2y+\delta_2y\ast_1x)
	\end{align*}
	This complete the proof.
\end{proof}
\section{Cohomology of compatible AssDer pairs}	\label{section 5}
In this section we introduce the cohomology theory of \textbf{compatible AssDer pairs}.\\
We recall first the cohomology compatible associative algebras.

\paragraph*{Cohomology of compatible associative algebras: }
Let $\mathcal{A}=(A,\mu_1,\mu_2)$ be a compatible associative algebra, using the  compatible pair of Maurer-Cartan elements and the Gerstenhaber bracket, authors in \cite{chtioui2021co} defined the cohomology of compatible associative algebra as follow. Let, for  $\varphi \in C^{\mathrm{n}}(A,A)$
\begin{equation*}
	d^n_1(\varphi)=(-1)^{n-1}[\mu_1,\varphi]_G \text{ and } d^n_2(\varphi)=(-1)^{n-1}[\mu_2,\varphi]_G
\end{equation*}
be the coboundary maps respectively of $(A,\mu_1)$ and $(A,\mu_2)$. \\
Then $d_{\mathrm{c}}^n:\mathfrak{C}_{\mathrm{c}}^n(A,A)\rightarrow \mathfrak{C}_{\mathrm{c}}^{n+1}(A,A)$ is the coboundary map for the cohomology of the compatible associative algebra $\mathcal{A}$ with self coefficients which is given by, $(f_1,\ldots,f_n) \in \mathfrak{C}_{\mathrm{c}}^n(A,A), \quad \text{for } n \geq 1$,
\begin{equation*}
	d_c^n(f_1,\ldots,f_n)=(-1)^{n-1}\big([u_1,f_1]_G,\ldots,\underbrace{ [\mu_1,f_{i-1}]_G+[\mu_2,f_i]_G}_{\text{i-th place}},\ldots,\ldots,[\mu_2,f_n] \big),
\end{equation*}
where 
\begin{equation*}
	\mathfrak{C}_{\mathrm{c}}^n(A,A)=\underbrace{C^n(A,A)\oplus \ldots \oplus C^n(A,A)}_{\text{ n times }},\text{ and } \mathfrak{C}_{\mathrm{c}}^0(A,A)=\{y\in A;\quad \mu_1(x,y)-\mu_1(y,x)=\mu_2(x,y)-\mu_2(y,x),\quad \forall x\in A\}.
\end{equation*}


\subsection{Maurer-Cartan characterization of compatible AssDer pairs}
In \cite{das2020extensions}, authors showed that if $(\mu,\delta)\in C^2_{\mathrm{AssDer}}(A,A)$, then $(A,\mu,\delta)$ is an AssDer pair if and only if $(\mu,\delta)$ is a Maurer-Cartan element in the graded Lie algebra $(C^{\ast}_{\mathrm{AssDer}}(A,A)=\oplus_{n\geq 0}C^{n}_{\mathrm{AssDer}}(A,A),[\![\cdot,\cdot]\!])$ (means that, $[\![(\mu,\delta),(\mu,\delta)]\!]=0$).\\
Where $[\![\cdot,\cdot]\!]$ is defined as follow 
\begin{eqnarray*}
	[\![\cdot,\cdot]\!]&:&C^m_{\mathrm{AssDer}}(A,A) \times C^n_{\mathrm{AssDer}}(A,A) \rightarrow C^{m+n-1}_{\mathrm{AssDer}}(A,A),\\	&&[\![(f_m,f_{m-1}),(g_n,g_{n-1})]\!]:=\Big([f_m,g_n]_{\mathrm{G}},(-1)^{(m+1)}[f_m,g_{n-1}]_{\mathrm{G}}+[f_{m-1},g_n]_{\mathrm{G}} \big).
\end{eqnarray*}
With the diferential of $(A,\mu,\delta)$ with coefficient in itself is given by
\begin{equation*}
	d^n_{\mathrm{AssDer}}(f_n,g_{n-1})=(-1)^{n-1}[\![(\mu,\delta),(f_n,g_{n-1})]\!],\quad \text{for } (f_n,g_{n-1})\in C^n_{\mathrm{AssDer}}(A,A).
\end{equation*}
Recall that, from \cite{das2020extensions}, if $(A,w,\delta)$ is an AssDer pair, then $(C^{\ast}_{\mathrm{AssDer}}(A,A),[\![\cdot,\cdot]\!],d_{(w,\delta)}^{\ast})$ is a differnetial graded Lie algebra.
\begin{re}
	\begin{enumerate}
		\item [1)] If $(C^{\ast}_{\mathrm{AssDer}}(A,A),[\![\cdot,\cdot]\!])$ is a graded Lie algebra, then $(w,\delta)$ is the Maurer-Cartan element of it.
		\item [2)] Also we can see here that the differential $d^{\ast}_{\mathrm{AssDer}}$ coincids with $(d_{(w,\delta)}^{\ast})$.
	\end{enumerate}
\end{re}
Next, based on \cite{chtioui2021co,das2020extensions} we construct a new graded Lie algebra $(C^{\ast}_{\mathrm{AssDer}},[\![\cdot,\cdot]\!],d_{(w_1,\delta_1)}^{\ast},d_{(w_2,\delta_2)}^{\ast})$ and we introduce its Maure-Cartan element.
\begin{defn}
	Let $(C^{\ast}_{\mathrm{AssDer}}(A,A),[\![\cdot,\cdot]\!],d_{(w_1,\delta_1)}^{\ast})$ and $(C^{\ast}_{\mathrm{AssDer}}(A,A),[\![\cdot,\cdot]\!],d_{(w_2,\delta_2)}^{\ast})$ be two differential graded Lie algebras. We call the quadruple $(C^{\ast+1}_{\mathrm{AssDer}}(A,A),[\![\cdot,\cdot]\!],d_{(w_1,\delta_1)}^{\ast},d_{(w_2,\delta_2)}^{\ast})$ a bidifferntial graded Lie algebra if $d_{(w_1,\delta_1)}^{\ast}$ and $d_{(w_2,\delta_2)}^{\ast}$ satisfy 
	\begin{equation*}
		d_{(w_1,\delta_1)}^{\ast} \circ d_{(w_2,\delta_2)}^{\ast}+d_{(w_2,\delta_2)}^{\ast} \circ d_{(w_1,\delta_1)}^{\ast}=0.
	\end{equation*}
\end{defn}
Next we introduce the compatible pair Maurer-Cartan element.
\begin{defn}
	The pair $\big((w_1',\delta_1'),(w_2',\delta_2') \big)$ is called \textbf{ compatible pair Maurer-Cartan element} of the bidifferential graded Lie algebra $(C^{\ast}_{\mathrm{AssDer}}(A,A),[\![\cdot,\cdot]\!],d_{(w_1,\delta_1)}^{\ast},d_{(w_2,\delta_2)}^{\ast})$ if $(w_1',\delta_1') \text{ and } (w_2',\delta_2')$ are \textbf{ Maurer-Cartan elements} of the differential graded Lie algebras $(C^{\ast+1}_{\mathrm{AssDer}}(A,A),[\![\cdot,\cdot]\!],d_{(w_1,\delta_1)}^{\ast})$ and $(C^{\ast}_{\mathrm{AssDer}}(A,A),[\![\cdot,\cdot]\!],d_{(w_2,\delta_2)}^{\ast})$ respectively and the following equation holds 
	\begin{equation} \label{MC comp AssDer}
		d_{(w_1,\delta_1)}^{\ast}(w_1',\delta_1')+d_{(w_2,\delta_2)}^{\ast}(w_2',\delta_2')+[\![(w_1',\delta_1'),(w_2',\delta_2')]\!]=0.
	\end{equation}
\end{defn}
\begin{re}
	Since $(w,\delta)\in C^2_{\mathrm{AssDer}}(A,A)$ we have 
	\begin{eqnarray}
		[\![(w,\delta),(w,\delta)]\!]&=&([w,w]_G,-2[w,\delta]_G), \label{MC comp Ass1}
	\end{eqnarray}
	\begin{equation}
		[\![(w_1,\delta_1),(w_2,\delta_2)]\!]=([w_1,w_2]_G,-[w_1,\delta_2]_G+[\delta_1,w_2]_G). \label{MC comp Ass2}
	\end{equation}
\end{re}

\begin{thm} \label{Theorem CMC}
	Let $A$ be a vector space, $w_1,w_2 \in Hom(A\otimes A,A)$ and $\delta_1,\delta_{2} \in Hom(A,A)$. Then $(A,w_1,w_2,\delta_1,\delta_2)$ is a compatible AssDer pair if and only if $\big((w_1,\delta_1),(w_2,\delta_2) \big)$ is a \textbf{ compatible pair of Maurer-Cartan element} of the bidifferential graded Lie algebra $(C^{\ast}_{\mathrm{AssDer}},[\![\cdot,\cdot]\!],d_{(w_1,\delta_1)}^{\ast}=0,d_{(w_2,\delta_2)}^{\ast}=0)$.
\end{thm}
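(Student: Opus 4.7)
The plan is to translate the two sides of the biconditional into the same system of equations on $w_1,w_2,\delta_1,\delta_2$ by unpacking (i) the bracket $[\![\cdot,\cdot]\!]$ explicitly on degree-$2$ elements and (ii) the Gerstenhaber brackets on $\mathrm{Hom}(A^{\otimes 2},A)$ and $\mathrm{Hom}(A,A)$. Because the theorem puts both differentials $d^{\ast}_{(w_1,\delta_1)}$ and $d^{\ast}_{(w_2,\delta_2)}$ equal to $0$, the compatible-pair Maurer–Cartan condition collapses to the three equations
\begin{equation*}
[\![(w_1,\delta_1),(w_1,\delta_1)]\!]=0,\quad [\![(w_2,\delta_2),(w_2,\delta_2)]\!]=0,\quad [\![(w_1,\delta_1),(w_2,\delta_2)]\!]=0,
\end{equation*}
so the proof reduces to matching each of these with the defining axioms of a compatible AssDer pair.

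First I would handle the two ``diagonal'' identities. Applying formula \eqref{MC comp Ass1} gives $[\![(w_i,\delta_i),(w_i,\delta_i)]\!]=([w_i,w_i]_G,-2[w_i,\delta_i]_G)$, so the vanishing of this element is equivalent to $[w_i,w_i]_G=0$ (i.e.\ $w_i$ is associative) together with $[w_i,\delta_i]_G=0$ (i.e.\ $\delta_i$ is a derivation of $(A,w_i)$). This is exactly the Maurer–Cartan characterization of an AssDer pair recalled from \cite{das2020extensions}, so $(A,w_i,\delta_i)$ is an AssDer pair for $i=1,2$ iff the two diagonal bracket identities hold.

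Next I would treat the ``off-diagonal'' identity, which is where the compatibility content of the theorem sits. Using formula \eqref{MC comp Ass2}, the condition $[\![(w_1,\delta_1),(w_2,\delta_2)]\!]=0$ splits into $[w_1,w_2]_G=0$ and $-[w_1,\delta_2]_G+[\delta_1,w_2]_G=0$. Expanding the first Gerstenhaber bracket on three arguments via the circle-product formula yields
\begin{equation*}
w_1(w_2(x,y),z)-w_1(x,w_2(y,z))+w_2(w_1(x,y),z)-w_2(x,w_1(y,z))=0,
\end{equation*}
which rearranges to precisely \eqref{comp associative algebras}. Expanding the second bracket, with $w_i$ of arity $2$ and $\delta_i$ of arity $1$, gives
\begin{equation*}
\delta_1(w_2(x,y))+\delta_2(w_1(x,y))=w_1(\delta_2 x,y)+w_1(x,\delta_2 y)+w_2(\delta_1 x,y)+w_2(x,\delta_1 y),
\end{equation*}
which is exactly \eqref{definition CADR assertion 2}.

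Putting the three equivalences together produces the biconditional. The main obstacle is simply bookkeeping: one must be careful with the signs in $[\![\cdot,\cdot]\!]$ (the $(m+1)$-sign in the second slot) and with the indexing in the circle product when $f$ and $g$ have different arities, since the derivation-compatibility axiom arises as a signed sum of two Gerstenhaber brackets of mixed degree. Once those computations are carried out on three, respectively two, generic arguments, the verification is direct and no additional structural input is needed beyond the Maurer–Cartan description of a single AssDer pair.
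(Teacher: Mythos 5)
Your proposal is correct and follows essentially the same route as the paper: both reduce the compatible-pair Maurer--Cartan condition (with zero differentials) to the vanishing of the two diagonal brackets $[\![(w_i,\delta_i),(w_i,\delta_i)]\!]$ and the mixed bracket $[\![(w_1,\delta_1),(w_2,\delta_2)]\!]$, and then match these with the axioms of a compatible AssDer pair via \eqref{MC comp Ass1} and \eqref{MC comp Ass2}. Your write-up is in fact more explicit than the paper's two-line sketch, since you carry out the Gerstenhaber-bracket expansions that recover \eqref{comp associative algebras} and \eqref{definition CADR assertion 2} rather than citing them.
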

\begin{proof}
	The proof is easy to check, we use only from \cite{chtioui2021co} that $(A,w_1,w_2,\delta_1,\delta_2)$ is a compatible AssDer pair means that 
	\begin{equation*}
		[w_1,w_1]_{\mathrm{G}}=0,\quad [w_2,w_2]_{\mathrm{G}}=0,\quad \text{and } [w_1,w_2]_{\mathrm{G}}=0
	\end{equation*}
	And from equation \eqref{MC comp AssDer} and \eqref{MC comp Ass2} and the fact that $(A,w_1,\delta_1)$, $(A,w_2,\delta_2)$ are both an AssDer pairs we obtain 
	\begin{equation*}
		[w_1,\delta_2]_{\mathrm{G}}=[w_2,\delta_2]_{\mathrm{G}}=0
	\end{equation*}
	This complete the proof.
\end{proof}
\subsection{Cohomology of compatible AssDer pairs}
So let $(L,\mu_1,\mu_2,\delta_{1},\delta_{2})$ be a compatible AssDer pair, with $w_{1}(x,y)=\mu_1(x,y)$ and  $w_{2}(x,y)=\mu_2(x,y)$.\\
By theorem \eqref{Theorem CMC}, $((w_{1},\delta_{1}),(w_{2},\delta_{2}))$   is a   Maurer-Cartan  element of the bidifferential graded  Lie algebra $(\mathcal{C}_{AssDer}^{\ast}(A,A),[\![\cdot,\cdot]\!],d_{(w_1,\delta_1)}^{\ast}=0,d_{(w_2,\delta_2)}^{\ast}=0).$\\
Define the set of compatible LieDer pairs $0$-cochains to be $0$.\\
For $n\geq 1$, define the space of $n$-cochains $\mathcal{C}_{\mathrm{C.A.D}}^{n}(A,A)$ by 
\begin{equation*}
	\mathcal{C}_{C.A.D}^{n}(A,A)=\underbrace{C_{AssDer}^n(A,A)\oplus\cdots\oplus C_{AssDer}^n(A,A)}_{n-times}
\end{equation*}
Define
\begin{equation*}
	d_{\mathrm{C.A.D}}^{1}:\mathcal{C}_{\mathrm{C.A.D}}^{1}(A,A)\longrightarrow \mathcal{C}_{\mathrm{C.A.D}}^{2}(A,A)
\end{equation*} 
By
\begin{equation}\label{partial 1 CAD}
	d_{\mathrm{C.A.D}}^{1} (f)= \left(([w_{1},f]_{G},-[f,\delta_1]_{G}),([w_{2},f]_{G},-[f,\delta_2]_{G})\right) \ \ \  \forall f\in\mathcal{C}_{C.L.D}^{1}(A,A).
\end{equation}
For $n\geq 2$ and $2 \leq i \leq n$ define 
\begin{equation*}
	d_{\mathrm{C.A.D}}^{n}:\mathcal{C}_{\mathrm{C.A.D}}^{n}(A,A)\longrightarrow \mathcal{C}_{\mathrm{C.A.D}}^{n+1}(A,A).
\end{equation*} 
By
\small{
	\begin{equation}\label{partial n CAD}
		\begin{split}	
			d_{\mathrm{C.A.D}}^{n}\Big((f^1,g^1),(f^2,g^2),\cdots,(f^n,g^n) \Big)=(-1)^{n-1}\Big(([w_1,f^1]_{\mathrm{G}},-[w_1,g^1]_{\mathrm{G}}-[f^1,\delta^1]_{\mathrm{G}}),\cdots,\\
			\underbrace{([w_2,f^{i-1}]_{\mathrm{G}}+[w_1,f^i]_{\mathrm{G}},-[w_2,g^{i-1}]_{\mathrm{G}}-[w_1,g^i]_{\mathrm{G}}-[f^{i-1},\delta_2]_{\mathrm{G}}-[f^i,\delta_1]_{\mathrm{G}})}_{\text{i-th place}},\\
			\ldots,([w_2,f^n]_{\mathrm{G}},[w_2,g^n]_{\mathrm{G}}-[f^n,\delta_2]_{\mathrm{G}}) \Big).
		\end{split}
	\end{equation}
}
\begin{thm} \label{coboundary CAD}
	The map $\partial$ is a coboundary operator 
	\begin{equation*}
		d_{\mathrm{C.A.D}}^{n+1} \circ d_{\mathrm{C.A.D}}^{n}=0
	\end{equation*}
\end{thm}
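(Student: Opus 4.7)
\medskip

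\noindent\textbf{Proof proposal.} The plan is to reduce the identity $d_{C.A.D}^{n+1}\circ d_{C.A.D}^{n}=0$ to the Maurer--Cartan characterization established in Theorem \ref{Theorem CMC} together with the graded Jacobi identity for the Gerstenhaber bracket. Concretely, the compatible AssDer pair axioms are equivalent to the collection of identities
\begin{equation*}
[w_{1},w_{1}]_{G}=0,\quad [w_{2},w_{2}]_{G}=0,\quad [w_{1},w_{2}]_{G}=0,\quad [w_{1},\delta_{1}]_{G}=0,\quad [w_{2},\delta_{2}]_{G}=0,\quad [w_{1},\delta_{2}]_{G}+[w_{2},\delta_{1}]_{G}=0,
\end{equation*}
which are exactly what must be fed into the double of $d_{C.A.D}^{n}$ to make it vanish.

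\medskip

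\noindent The first step is to rewrite the formula \eqref{partial n CAD} in a single uniform fashion: if we set $\Phi^{0}=(0,0)$ and $\Phi^{n+1}=(0,0)$ by convention and abbreviate the Der-pair graded bracket by $[\![\cdot,\cdot]\!]$ from Section 5, then, for $(\Phi^{1},\dots,\Phi^{n})\in\mathcal{C}_{C.A.D}^{n}(A,A)$,
\begin{equation*}
d_{C.A.D}^{n}(\Phi^{1},\dots,\Phi^{n})_{i}=(-1)^{n-1}\bigl([\![(w_{1},\delta_{1}),\Phi^{i}]\!]+[\![(w_{2},\delta_{2}),\Phi^{i-1}]\!]\bigr),\qquad 1\le i\le n+1,
\end{equation*}
using that $d_{(w_{k},\delta_{k})}^{\ast}(-)=(-1)^{\ast-1}[\![(w_{k},\delta_{k}),-]\!]$ recovers the AssDer differential of \cite{das2020extensions}. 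Composing once more and extracting the $i$-th component produces the sum of four terms
\begin{equation*}
\bigl[\![(w_{1},\delta_{1}),[\![(w_{1},\delta_{1}),\Phi^{i}]\!]\bigr]\!]+\bigl[\![(w_{1},\delta_{1}),[\![(w_{2},\delta_{2}),\Phi^{i-1}]\!]\bigr]\!]+\bigl[\![(w_{2},\delta_{2}),[\![(w_{1},\delta_{1}),\Phi^{i-1}]\!]\bigr]\!]+\bigl[\![(w_{2},\delta_{2}),[\![(w_{2},\delta_{2}),\Phi^{i-2}]\!]\bigr]\!]
\end{equation*}
up to a global sign $(-1)^{n-1}(-1)^{n}=-1$.

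\medskip

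\noindent The second step is to collapse each of these four terms by graded Jacobi in the DGLA $(\mathcal{C}_{AssDer}^{\ast}(A,A),[\![\cdot,\cdot]\!])$. The pure terms $[\![(w_{k},\delta_{k}),[\![(w_{k},\delta_{k}),-]\!]]\!]$ vanish because each $(w_{k},\delta_{k})$ is a Maurer--Cartan element of $(\mathcal{C}_{AssDer}^{\ast}(A,A),[\![\cdot,\cdot]\!])$, so that $[\![(w_{k},\delta_{k}),(w_{k},\delta_{k})]\!]=0$, which is precisely $d_{(w_{k},\delta_{k})}\circ d_{(w_{k},\delta_{k})}=0$. For the two mixed terms acting on $\Phi^{i-1}$, the graded Jacobi identity rewrites their sum as $[\![[\![(w_{1},\delta_{1}),(w_{2},\delta_{2})]\!],\Phi^{i-1}]\!]$; by \eqref{MC comp AssDer} and \eqref{MC comp Ass2} combined with the compatible AssDer axiom $[w_{1},w_{2}]_{G}=0$ and $[w_{1},\delta_{2}]_{G}+[w_{2},\delta_{1}]_{G}=0$, this interior bracket vanishes, and so does the whole expression.

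\medskip

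\noindent The final step is bookkeeping: one checks that the boundary cases $i=1$, $i=2$, $i=n+1$, where some of the $\Phi^{i-k}$ are set to zero by convention, behave correctly, and one verifies the low-degree edge $d_{C.A.D}^{2}\circ d_{C.A.D}^{1}=0$ separately from the formula \eqref{partial 1 CAD}, which again reduces to $[\![(w_{k},\delta_{k}),(w_{k},\delta_{k})]\!]=0$ for $k=1,2$ and to $[\![(w_{1},\delta_{1}),(w_{2},\delta_{2})]\!]=0$ for the mixed piece. The main obstacle is not conceptual but notational: one must carry the sign $(-1)^{m+1}$ appearing in the $\delta$-component of $[\![\cdot,\cdot]\!]$ consistently through the double bracket expansion, since the Der-pair bracket mixes degrees $m$ and $m-1$ asymmetrically, so the cleanest path is to phrase the whole argument at the level of the compatible pair Maurer--Cartan element and invoke the bidifferential condition $d_{(w_{1},\delta_{1})}\circ d_{(w_{2},\delta_{2})}+d_{(w_{2},\delta_{2})}\circ d_{(w_{1},\delta_{1})}=0$ once and for all, rather than expanding every Gerstenhaber bracket by hand.
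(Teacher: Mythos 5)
Your proposal is correct, and it reaches the conclusion by a genuinely cleaner route than the paper. The paper's own proof of this theorem is a one-line reference to the compatible LieDer case (Theorem \ref{thm5.6}), whose proof is a term-by-term expansion of every component into individual brackets, with the cancellations supplied by the component identities of Proposition \ref{prop5.7}. You instead observe that the $i$-th component of $d_{\mathrm{C.A.D}}^{n}$ is exactly $(-1)^{n-1}\bigl([\![(w_1,\delta_1),\Phi^{i}]\!]+[\![(w_2,\delta_2),\Phi^{i-1}]\!]\bigr)$ in the Das--Mandal bracket, so that the composite reduces to the two bracket identities $[\![(w_k,\delta_k),(w_k,\delta_k)]\!]=0$ and $[\![(w_1,\delta_1),(w_2,\delta_2)]\!]=0$ via the graded Jacobi identity of the graded Lie algebra $(C^{\ast}_{\mathrm{AssDer}}(A,A),[\![\cdot,\cdot]\!])$; these two identities are precisely the content of Theorem \ref{Theorem CMC}, and the four identities of Proposition \ref{prop5.7} are nothing but their componentwise unpacking. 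The underlying mechanism is therefore identical, but your packaging buys automatic sign bookkeeping and uniform treatment of the boundary components, at the cost of having to trust that $[\![\cdot,\cdot]\!]$ really satisfies the graded Jacobi identity (which the paper does assert, citing \cite{das2020extensions}). Two small remarks: your uniform formula matches the displayed \eqref{partial n CAD} only after correcting the evident sign typo in its last component (the term $[w_2,g^n]_{\mathrm{G}}$ should read $-[w_2,g^n]_{\mathrm{G}}$, as it does in the LieDer analogue's proof), and for $n=1$ one must read $f\in C^1_{\mathrm{AssDer}}(A,A)$ as the pair $(f,0)$ for \eqref{partial 1 CAD} to be the $n=1$ instance of the same formula --- both of which you flag correctly in your final step.
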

\begin{proof}
	The proof is similar to the proof of theorem \eqref{thm5.6} where $[\cdot,\cdot]_{\mathrm{NR}}$ is replaced by $[\cdot,\cdot]_{\mathrm{G}}$ with subject to the changes in the calccul.
\end{proof}
\begin{defn}
	Let $(A,\mu_1,\mu_2,\delta_1,\delta_2)$ be a \textbf{compatible AssDer pair}. The cohomology of the cochains complex $(\mathcal{C}_{\mathrm{C.A.D}}^{\ast}(A,A),d_{\mathrm{C.A.D}}^{\ast})$ is called the cohomology of $(A,\mu_1,\mu_2,\delta_1,\delta_2)$, where the $n$-th cohomology group is denoted by $\mathcal{H}_{\mathrm{C.A.D}}^{n}(A,A)$.
\end{defn}
\section{Cohomology of compatible LieDer pairs} \label{section 6}
In this section, we construct the bidifferential graded Lie algebra whose \textbf{Maurer-Cartan elements} (where the differentials depends on the structure of LieDer pair) are \textbf{compatible LieDer pair}. Also we give a cohomology of it. Consider the degree of elements in $C^n(L,L)$ are defined to be $(n-1)$. 
\subsection{Maurer-Cartan characterization of compatible LieDer pairs}
Let's first recall some basis results that we need next. 
\paragraph*{Maurer-Cartan element of compatible Lie algebras :}

A \textbf{Maurer-Cartan element} of a differential graded Lie algebra $(L=\oplus_{i \in \mathbb{Z}}L_i,[\cdot,\cdot],d)$ is an element $P \in L_1$ such that 
\begin{equation*} 
	dP+\frac{1}{2}[P,P]=0
\end{equation*}
we recall too, that $(C^{\star}(L,L),[\cdot,\cdot]_{NR})$ is a graded Lie algebra see 
Remark that any differential Lie algebra with a zero differential is simply a graded Lie algebra.
\begin{defn}{\cite{liu2023maurer}}\\
	Let $(L,[\cdot,\cdot],\partial_1)$ and $(L,[\cdot,\cdot],\partial_2)$ be two differential graded Lie algebras. The quadruple $(L,[\cdot,\cdot],\partial_1,\partial_2)$ is called a bidifferential graded Lie algebra if $\partial_1$ and $\partial_2$ satisfy 
	\begin{equation*}
		\partial_1 \circ \partial_2 + \partial_2 \circ \partial_1=0
	\end{equation*}
\end{defn}
\begin{defn}{\cite{liu2023maurer}} \\
	A pair $(P_1,P_2) \in L_1\oplus L_1$ is called a \textbf{Maurer-Cartan element} of the bidifferential graded Lie algebra $(L,[\cdot,\cdot],\partial_1,\partial_2)$ if and only if $P_1$ and $P_2$ are \textbf{Maurer-Cartan elements} of the differential graded Lie algebras $(L,[\cdot,\cdot],\partial_1)$ and $(L,[\cdot,\cdot],\partial_2)$ respectively and the following equation holds 
	\begin{equation*}
		\partial_1P_2+\partial_2P_1+[P_1,P_2]=0
	\end{equation*} 
\end{defn}

\paragraph*{Maurer-Cartan element of compatible LieDer pairs : }
So, Now we get under way the main of the this subsection.
\begin{defn}
	Let $(DC^{\star}(L,L),\{\cdot,\cdot\},d_{(w_1,\delta_1)})$ and $(DC^{\star}(L,L),\{\cdot,\cdot\},d_{(w_2,\delta_2)})$ be two differential graded Lie algebras. we call the quadruple $(DC^{\star}(L,L),\{\cdot,\cdot\},d_{(w_1,\delta_1)},d_{(w_2,\delta_2)})$ a bidifferential graded Lie algebra if $d_{(w_1,\delta_1)}\text{ and }d_{(w_2,\delta_2)}$ satisfy 
	\begin{equation} 
		d_{(w_1,\delta_1)} \circ d_{(w_2,\delta_2)}+d_{(w_2,\delta_2)} \circ 	d_{(w_1,\delta_1)} =0
	\end{equation}
\end{defn}
\begin{defn}
	The couple $\big((w_1',\delta_1'),(w_2',\delta_2') \big)$ is called \textbf{ Maurer-Cartan element} of the bidifferential graded Lie algebra $(DC^{\star}(L,L),\{\cdot,\cdot\},d_{(w_1,\delta_1)},d_{(w_2,\delta_2)})$ if $(w_1',\delta_1') \text{ and } (w_2',\delta_2')$ are \textbf{ Maurer-Cartan elements} of the differential graded Lie algebras $(DC^{\star}(L,L),\{\cdot,\cdot\},d_{(w_1,\delta_1)})$ and $(DC^{\star}(L,L),\{\cdot,\cdot\},d_{(w_2,\delta_2)})$ respectively and the following equation holds 
	\begin{equation} 
		d_{(w_1,\delta_1)}(w_1',\delta_1')+d_{(w_2,\delta_2)}(w_2',\delta_2')+\{(w_1',\delta_1'),(w_2',\delta_2')\}=0.
	\end{equation}
\end{defn}
\begin{re}
	Since $w\in \mathrm{Hom}(\wedge^2L,L)$ and $\delta \in \mathrm{Hom}(L,L)$ then $(w,\delta) \in DC^1(L;L)$ and we have 
	\begin{equation}
		\{(w,\delta),(w,\delta)\}=\big([w,w]_{\mathrm{NR}},-2[w,\delta]_{\mathrm{NR}} \big).
	\end{equation}
	Similarly we have 
	\begin{equation}
		\{(w_1,\delta_1),(w_2,\delta_2)\}=\Big([w_1,w_2]_{\mathrm{NR}},-[w_1,\delta_2]_{\mathrm{NR}}-[w_2,\delta_1]_{\mathrm{NR}} \Big)
	\end{equation}
\end{re}
We know that if $(L,[\cdot,\cdot])$ is a graded Lie algebra, then it is obvious that $(L,[\cdot,\cdot],\partial_1=0,\partial_2=0)$ is a bidifferential graded Lie algebra. Which implies the next theorem 
\begin{thm}\label{thm5.4}
	Let $L$ be a vector space, $w_1,w_2 \in \mathrm{Hom}(\wedge^2L,L)$ and $\delta_1,\delta_{2} \in \mathrm{Hom}(L,l)$. Then $(L,w_1,w_2,\delta_1,\delta_2)$ is a compatible LieDer pair if and only if $\big((w_1,\delta_1),(w_2,\delta_2) \big)$ is a \textbf{ Maurer-Cartan element} of the bidifferential graded Lie algebra $(DC^{\star}(L,L),\{\cdot,\cdot\},d_{(w_1,\delta_1)}=0,d_{(w_2,\delta_2)}=0)$
\end{thm}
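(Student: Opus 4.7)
The plan is to unpack the definition of a compatible pair Maurer-Cartan element in the bidifferential graded Lie algebra $(DC^{\star}(L,L),\{\cdot,\cdot\},0,0)$ and match each resulting condition against the four conditions in Definition \ref{def2.1}. Since both differentials are zero, the three defining equations of a compatible pair Maurer-Cartan element collapse to:
\begin{align*}
\tfrac{1}{2}\{(w_1,\delta_1),(w_1,\delta_1)\} &= 0,\\
\tfrac{1}{2}\{(w_2,\delta_2),(w_2,\delta_2)\} &= 0,\\
\{(w_1,\delta_1),(w_2,\delta_2)\} &= 0.
\end{align*}

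First I would handle the two self-brackets. Using the identity $\{(w,\delta),(w,\delta)\}=([w,w]_{\mathrm{NR}},-2[w,\delta]_{\mathrm{NR}})$ recalled just before the theorem, each of the first two equations splits into a pair: $[w_i,w_i]_{\mathrm{NR}}=0$ and $[w_i,\delta_i]_{\mathrm{NR}}=0$ for $i=1,2$. By the classical Nijenhuis--Richardson characterization of Lie brackets and the characterization of LieDer pairs reviewed earlier (via the graded derivation $d_{(w,\delta)}$), these are equivalent to $(L,w_i,\delta_i)$ being a LieDer pair for each $i$, i.e.\ $w_i$ satisfies the Jacobi identity and $\delta_i$ is a derivation of $w_i$.

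Next I would analyze the cross-bracket using $\{(w_1,\delta_1),(w_2,\delta_2)\}=([w_1,w_2]_{\mathrm{NR}},-[w_1,\delta_2]_{\mathrm{NR}}-[w_2,\delta_1]_{\mathrm{NR}})$. Setting this to zero gives two scalar conditions: $[w_1,w_2]_{\mathrm{NR}}=0$ and $[w_1,\delta_2]_{\mathrm{NR}}+[w_2,\delta_1]_{\mathrm{NR}}=0$. Expanding $[w_1,w_2]_{\mathrm{NR}}$ explicitly on three arguments reproduces the symmetrized Jacobi relation
\begin{equation*}
\circlearrowleft_{x,y,z}[x,[y,z]_1]_2 + \circlearrowleft_{x,y,z}[x,[y,z]_2]_1 = 0,
\end{equation*}
which is exactly \eqref{Def CLDP 1}. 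Expanding $[w_i,\delta_j]_{\mathrm{NR}}$ on two arguments yields $\delta_j[x,y]_i-[\delta_jx,y]_i-[x,\delta_jy]_i$, so the second condition yields
\begin{equation*}
\delta_2[x,y]_1 + \delta_1[x,y]_2 = [\delta_2x,y]_1 + [x,\delta_2y]_1 + [\delta_1x,y]_2 + [x,\delta_1y]_2,
\end{equation*}
which is precisely \eqref{Def CLDP 2}.

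Assembling these observations gives the equivalence in both directions since every step is a logical equivalence: $(L,w_1,w_2,\delta_1,\delta_2)$ is a compatible LieDer pair iff (i) each $(L,w_i,\delta_i)$ is a LieDer pair and (ii) the two compatibility relations \eqref{Def CLDP 1}--\eqref{Def CLDP 2} hold, and these match the three Maurer--Cartan conditions exactly. The only potential obstacle is the bookkeeping of signs and shifts in the definition of $\{\cdot,\cdot\}$ on the suspended summand and in the Nijenhuis--Richardson bracket; I would do that expansion carefully once on generic arguments $x,y,z\in L$ to confirm the signs line up with the conventions recalled in the preliminary section, and the rest of the argument is then a direct unpacking.
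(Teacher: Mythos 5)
Your proposal is correct and follows essentially the same route as the paper: with both differentials zero, the Maurer--Cartan conditions reduce to the vanishing of the two self-brackets (each $(L,w_i,\delta_i)$ is a LieDer pair) and of the cross-bracket $\{(w_1,\delta_1),(w_2,\delta_2)\}$, whose two components are matched against \eqref{Def CLDP 1} and \eqref{Def CLDP 2} via the Nijenhuis--Richardson bracket. If anything, your reading of the cross term is the more careful one: the Maurer--Cartan condition only forces the sum $[w_1,\delta_2]_{\mathrm{NR}}+[w_2,\delta_1]_{\mathrm{NR}}=0$, which is exactly \eqref{Def CLDP 2}, whereas the paper's forward direction asserts each summand vanishes separately, a stronger claim than the compatibility hypothesis actually provides.
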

\begin{proof}
	Suppose that $(L,w_1,w_2,\delta_1,\delta_2)$ is a \textbf{compatible LieDer pair}. \\
	By the the first assertion $i)$ in definition \eqref{def2.1}
	we obtain $[w_1,w_2]_{\mathrm{NR}}=0$. \\
	On the other side we have 
	\begin{align*}
		[w_1,\delta_2]_{\mathrm{NR}}&=w_1\circ \delta_2(x,y)-\delta_2 \circ w_1(c,y) \\
		&=w_1(\delta_2x,y)-w_1(\delta_2y,x)-\delta_2\circ w_1(x,y) \\
		&=0
	\end{align*}
	Similarly we obtain
	\begin{align*}
		[w_2,\delta_1]_{\mathrm{NR}}=0
	\end{align*}
	This implies that $\big((w_1,\delta_1),(w_2,\delta_2)\big)$ is a \textbf{Maurer-Cartan element}. \\
	For the second sense. \\
	Suppose that $\big((w_1,\delta_1),(w_2,\delta_2)\big)$ is a \textbf{Maurer-Cartan element} which means 
	$[w_1,w_2]=0,[w_1,\delta_2]+[w_2,\delta_1]=0$. \\
	With simple calcul we obtain that $(L,w_1,w_2,\delta_1,\delta_2)$ is a \textbf{compatible LieDer pair}.
\end{proof}
\subsection{Cohomology of compatible LieDer pair}
In this subsection, we indroduce a cohomology theory of \textbf{compatible LieDer pair}.

So let $(L,[\cdot,\cdot]_{1},[\cdot,\cdot]_{2},\delta_{1},\delta_{2})$ be a \textbf{compatible LieDer pair}, with $w_{1}(x,y)=[x,y]_{1}$ and  $w_{2}(x,y)=[x,y]_{2}$.\\
By theorem \eqref{thm5.4}, $((w_{1},\delta_{1}),(w_{2},\delta_{2}))$   is a   Maurer-Cartan  element of the bidifferential graded  Lie algebra
\begin{equation*}
	(\mathcal{DC}^{\ast}(L,L),\{\cdot,\cdot \},d_{(w_1,\delta_1)}=0,d_{(w_2,\delta_2)}=0).
\end{equation*} 
Define the set of compatible LieDer pairs $0$-cochains to be $0$.\\
For $n\geq 1$, define the space of $n$-cochains $\mathcal{C}_{C.L.D}^{n}(L,L)$ by 
\begin{equation*}
	\mathcal{C}_{C.L.D}^{n}(L,L)=\underbrace{\mathcal{DC}^{n}(L,L)\oplus\cdots\oplus\mathcal{DC}^{n}(L,L)}_{n-times}
\end{equation*}
Define
\begin{equation*}
	\partial_{\mathrm{C.L.D.P}}^{1}:\mathcal{C}_{C.L.D}^{1}(L,L)\longrightarrow \mathcal{C}_{C.L.D}^{2}(L,L)
\end{equation*} 
By
\begin{equation}\label{eq5.4}
	\partial_{\mathrm{C.L.D.P}}^{1} (f)= \left(([w_{1},f]_{\mathrm{NR}},-[f,\delta_1]_{\mathrm{NR}}),([w_{2},f]_{\mathrm{NR}},-[f,\delta_2]_{\mathrm{NR}})\right),\quad \forall f\in\mathcal{C}_{\mathrm{C.L.D}}^{1}(L,L)
\end{equation}
and for $n\geq 2$ and for $2\leq i \leq n$ define 
\begin{equation*}
	\partial_{\mathrm{C.L.D.P}}^{n}:\mathcal{C}_{\mathrm{C.L.D}}^{n}(L,L)\longrightarrow \mathcal{C}_{\mathrm{C.L.D}}^{n+1}(L,L)
\end{equation*} 
By
\small{
	\begin{equation}
		\begin{split}
			\partial_{\mathrm{C.L.D.P}}^{n}\Big((f^1,g^1),(f^2,g^2),\cdots,(f^n,g^n) \Big)=(-1)^{n-1}\Big(([w_1,f^1]_{\mathrm{NR}},-[w_1,g^1]_{\mathrm{NR}}-[f^1,\delta^1]_{\mathrm{NR}}),\cdots,\\
			\underbrace{([w_2,f^{i-1}]_{\mathrm{NR}}+[w_1,f^i]_{\mathrm{NR}},-[w_2,g^{i-1}]_{\mathrm{NR}}-[w_1,g^i]_{\mathrm{NR}}-[f^{i-1},\delta_2]_{\mathrm{NR}}-[f^i,\delta_1]_{\mathrm{NR}})}_{\text{i-th place}},
			\cdots,([w_2,f^n]_{\mathrm{NR}},[w_2,g^n]_{\mathrm{NR}}-[f^n,\delta_2]_{\mathrm{NR}}) \Big)
		\end{split}
	\end{equation}
}
\begin{thm} \label{thm5.6}
	The map $\partial_{\mathrm{C.L.D.P}}^{\ast}$ is a coboundary operator, means that 
	\begin{equation*}
		\partial_{\mathrm{C.L.D.P}}^{n+1} \circ \partial_{\mathrm{C.L.D.P}}^{n}=0
	\end{equation*}
\end{thm}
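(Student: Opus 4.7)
The plan is to recognize that the intricate-looking formula for $\partial_{\mathrm{C.L.D.P}}^{n}$ is really a \emph{staircase} built from two linear endomorphisms of $DC^{\ast}(L,L)$, namely $D_j(f,g):=\{(w_j,\delta_j),(f,g)\}$ for $j=1,2$. Expanding the bracket $\{\cdot,\cdot\}$ shows
\begin{equation*}
D_j(f,g)=\bigl([w_j,f]_{\mathrm{NR}},\,-[w_j,g]_{\mathrm{NR}}-[f,\delta_j]_{\mathrm{NR}}\bigr),
\end{equation*}
so, writing $F^i:=(f^i,g^i)$, the coboundary becomes
\begin{equation*}
\partial_{\mathrm{C.L.D.P}}^{n}(F^1,\ldots,F^n)=(-1)^{n-1}\bigl(D_1F^1,\,D_2F^1+D_1F^2,\,\ldots,\,D_2F^{n-1}+D_1F^n,\,D_2F^n\bigr),
\end{equation*}
up to an obvious sign typo in the last slot of the original expression. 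Reading the $i$-th slot of the composition $\partial_{\mathrm{C.L.D.P}}^{n+1}\circ\partial_{\mathrm{C.L.D.P}}^{n}$ then gives exactly
\begin{equation*}
D_1^{2}F^i+(D_1D_2+D_2D_1)F^{i-1}+D_2^{2}F^{i-2},
\end{equation*}
with the convention $F^0=F^{-1}=0$. Hence the theorem reduces to the three identities $D_1^{2}=0$, $D_2^{2}=0$, and $D_1D_2+D_2D_1=0$.

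The first two identities are immediate from the material recalled in Section~\ref{preliminary}: since $(L,w_j,\delta_j)$ is a LieDer pair, the lemma attributed to \cite{tang2019cohomologies} states that $(DC^{\ast}(L,L),\{\cdot,\cdot\},d_{(w_j,\delta_j)})$ is a differential graded Lie algebra, i.e.\ $D_j\circ D_j=0$. The substantive identity is the mixed one $D_1D_2+D_2D_1=0$. I would obtain it from the graded Jacobi identity for $\{\cdot,\cdot\}$ applied to the triple $\bigl((w_1,\delta_1),(w_2,\delta_2),(f,g)\bigr)$: both $(w_j,\delta_j)$ have degree $1$, so the Jacobi identity rewrites the anticommutator $D_1D_2+D_2D_1$ on $(f,g)$ as $\bigl\{\{(w_1,\delta_1),(w_2,\delta_2)\},(f,g)\bigr\}$ up to the signs recorded in the remark preceding Theorem~\ref{thm5.4}. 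By Theorem~\ref{thm5.4}, the compatibility assumption is precisely $\{(w_1,\delta_1),(w_2,\delta_2)\}=0$, and the anticommutator vanishes.

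The main obstacle I expect is the sign bookkeeping in the graded Jacobi step, since $(w_j,\delta_j)$ sits in degree $1$ while $(f,g)$ sits in arbitrary degree; factors of the form $(-1)^{|F|}$ will appear in both Jacobi summands and must combine so that the two contributions assemble into $D_1D_2+D_2D_1$ with a single overall sign. Once this is done, the three vanishings $D_1^{2}=0$, $D_2^{2}=0$, $D_1D_2+D_2D_1=0$ slot into the staircase display above and the theorem follows. As the excerpt already notes, the same argument (with $[\cdot,\cdot]_{\mathrm{NR}}$ replaced by $[\cdot,\cdot]_{\mathrm{G}}$) proves Theorem~\ref{coboundary CAD}, so writing one proof carefully delivers both coboundary statements.
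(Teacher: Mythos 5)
Your proposal is correct, and it organizes the argument differently from the paper. The paper proves the statement by first isolating four component-wise identities (its Proposition \ref{prop5.7}, each obtained from the graded Jacobi identity for $[\cdot,\cdot]_{\mathrm{NR}}$ together with the vanishing of $[w_i,w_j]_{\mathrm{NR}}$ and $[w_i,\delta_j]_{\mathrm{NR}}$) and then expands $\partial_{\mathrm{C.L.D.P}}^{n+1}\circ\partial_{\mathrm{C.L.D.P}}^{n}$ in full, checking slot by slot that everything cancels. You instead package each slot contribution as $D_j=\{(w_j,\delta_j),\cdot\}$, observe the staircase shape of the coboundary, and reduce the whole theorem to $D_1^2=D_2^2=0$ and $D_1D_2+D_2D_1=0$; the first two come for free from the recalled lemma that $(DC^{\ast}(L,L),\{\cdot,\cdot\},d_{(w_j,\delta_j)})$ is a differential graded Lie algebra, and the third is a single application of the graded Jacobi identity in the adjoint form $\{a,\{b,c\}\}=\{\{a,b\},c\}+(-1)^{|a||b|}\{b,\{a,c\}\}$ with $|a|=|b|=1$, which gives $D_1D_2+D_2D_1=\{\{(w_1,\delta_1),(w_2,\delta_2)\},\cdot\}=0$ by Theorem \ref{thm5.4}. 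Note that in this form the sign issue you were worried about does not arise: no factor $(-1)^{|F|}$ appears, since both inputs sit in degree $1$. The underlying ingredients are identical to the paper's, but your operator-level reduction is shorter, makes the role of the compatibility condition $\{(w_1,\delta_1),(w_2,\delta_2)\}=0$ transparent, avoids re-deriving $D_j^2=0$ by hand, and transfers verbatim to Theorem \ref{coboundary CAD} by replacing $[\cdot,\cdot]_{\mathrm{NR}}$ with $[\cdot,\cdot]_{\mathrm{G}}$; you are also right that the sign in the last slot of the displayed definition of $\partial_{\mathrm{C.L.D.P}}^{n}$ is a typo, as the paper's own proof uses $-[w_2,g^n]_{\mathrm{NR}}$ there.
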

We will, first, introduce the following proposition that we need later in the proof of the theorem.
\begin{prop}\label{prop5.7}
	Let $j\in \{1,2,...,n\}$, let $f^j \in \mathcal{C}^n(L,L)$, $w_1,w_2 \in \mathcal{C}^2(L,L)$ and $\delta_1,\delta_2 \in \mathcal{C}^n(L,L)$. \\
	By theorem \eqref{thm5.4}, if $((w_1,\delta_1),(w_2,\delta_2))$ is a Maurer-Cartan element of the graded Lie algebra then we have
	\begin{align*}
		[w_1,w_2]_{\mathrm{NR}}=[w_1,w_1]_{\mathrm{NR}}=[w_2,w_2]_{\mathrm{NR}}&=0 \\
		[w_1,\delta_1]_{\mathrm{NR}}=[w_2,\delta_2]_{\mathrm{NR}}=[w_1,\delta_2]_{\mathrm{NR}}=[w_2,\delta_1]_{\mathrm{NR}}&=0
	\end{align*}
	Then the following identities holds 
	\begin{enumerate}
		\item [1)] $[w_1,[w_1,f^j]_{\mathrm{NR}}]_{\mathrm{NR}}=0$ ,
		\item [2)] $[w_1,[f^j,\delta_1]_{\mathrm{NR}}]_{\mathrm{NR}}=[[w_1,f^j]_{\mathrm{NR}},\delta_1]_{\mathrm{NR}}$ ,
		\item [3)] $[w_2,[w_1,f^j]_{\mathrm{NR}}]_{\mathrm{NR}}+[w_1,[w_2,f^j]_{\mathrm{NR}}]_{\mathrm{NR}}+[w_1,[w_1,f^{j+1}]_{\mathrm{NR}}]_{\mathrm{NR}}=0$ ,
		\item [4)] $[w_2,[f^j,\delta_1]_{\mathrm{NR}}]_{\mathrm{NR}}+[w_1,[f^j,\delta_2]_{\mathrm{NR}}]_{\mathrm{NR}}+[w_1,[f^{j+1},\delta_1]_{\mathrm{NR}}]_{\mathrm{NR}}-[[w_1,f^j]_{\mathrm{NR}},\delta_2]_{\mathrm{NR}}-[[w_2,f^j]_{\mathrm{NR}},\delta_1]_{\mathrm{NR}}-[[w_1,f^{j+1}]_{\mathrm{NR}},\delta_1]_{\mathrm{NR}}=0$ .
	\end{enumerate}
\end{prop}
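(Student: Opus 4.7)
The plan is to deduce all four identities from two ingredients: the graded Jacobi identity for the Nijenhuis--Richardson bracket, and the Maurer--Cartan hypotheses that are already collected in the statement. Since $(\mathcal{C}^{\star}(L,L),[\cdot,\cdot]_{\mathrm{NR}})$ is a graded Lie algebra in which $\mathcal{C}^n(L,L)$ is placed in degree $n-1$, the elements $w_1,w_2$ have degree $1$, the elements $\delta_1,\delta_2$ have degree $0$, and an element $f^j\in\mathcal{C}^n(L,L)$ has degree $n-1$. Every step below is an instance of the graded Jacobi identity
\begin{equation*}
[a,[b,c]_{\mathrm{NR}}]_{\mathrm{NR}} \;=\; [[a,b]_{\mathrm{NR}},c]_{\mathrm{NR}} + (-1)^{|a||b|}[b,[a,c]_{\mathrm{NR}}]_{\mathrm{NR}},
\end{equation*}
applied to carefully chosen triples and combined with the listed vanishings.

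For (1), take $(a,b,c)=(w_1,w_1,f^j)$. Because $|w_1||w_1|=1$, the identity collapses to $2[w_1,[w_1,f^j]_{\mathrm{NR}}]_{\mathrm{NR}} = [[w_1,w_1]_{\mathrm{NR}},f^j]_{\mathrm{NR}}$, which vanishes since $[w_1,w_1]_{\mathrm{NR}}=0$. For (2), take $(a,b,c)=(w_1,f^j,\delta_1)$; the side term $(-1)^{|w_1||f^j|}[f^j,[w_1,\delta_1]_{\mathrm{NR}}]_{\mathrm{NR}}$ is killed by $[w_1,\delta_1]_{\mathrm{NR}}=0$, leaving exactly the claimed equality. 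For (3), the last summand is zero by (1), and applying graded Jacobi to $(w_1,w_2,f^j)$ yields $[w_1,[w_2,f^j]_{\mathrm{NR}}]_{\mathrm{NR}}+[w_2,[w_1,f^j]_{\mathrm{NR}}]_{\mathrm{NR}} = [[w_1,w_2]_{\mathrm{NR}},f^j]_{\mathrm{NR}}$, which is zero by $[w_1,w_2]_{\mathrm{NR}}=0$.

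Identity (4) is the only one that needs extended bookkeeping, but it uses nothing new. Three separate applications of graded Jacobi, to the triples $(w_2,f^j,\delta_1)$, $(w_1,f^j,\delta_2)$, and $(w_1,f^{j+1},\delta_1)$, combined with the vanishings $[w_2,\delta_1]_{\mathrm{NR}}=[w_1,\delta_2]_{\mathrm{NR}}=[w_1,\delta_1]_{\mathrm{NR}}=0$, rewrite the three nested brackets $[w_2,[f^j,\delta_1]_{\mathrm{NR}}]_{\mathrm{NR}}$, $[w_1,[f^j,\delta_2]_{\mathrm{NR}}]_{\mathrm{NR}}$, $[w_1,[f^{j+1},\delta_1]_{\mathrm{NR}}]_{\mathrm{NR}}$ as $[[w_2,f^j]_{\mathrm{NR}},\delta_1]_{\mathrm{NR}}$, $[[w_1,f^j]_{\mathrm{NR}},\delta_2]_{\mathrm{NR}}$, $[[w_1,f^{j+1}]_{\mathrm{NR}},\delta_1]_{\mathrm{NR}}$ respectively. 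Substituting these into the left-hand side of (4), the six terms cancel pairwise. The only real obstacle is sign tracking in graded Jacobi, but since $w_1,w_2$ are odd and $\delta_1,\delta_2$ are even, all relevant parities are fixed in advance; consequently the proof reduces to a formal manipulation at the level of the graded Lie algebra, with no explicit computation on cochains.
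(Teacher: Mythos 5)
Your proposal is correct and follows essentially the same route as the paper's proof: each identity is obtained by applying the graded Jacobi identity for $[\cdot,\cdot]_{\mathrm{NR}}$ to the appropriate triple and then invoking the Maurer--Cartan vanishings $[w_i,w_j]_{\mathrm{NR}}=0$ and $[w_i,\delta_j]_{\mathrm{NR}}=0$, with (3) reducing to (1) and (4) reducing to three instances of (2). Your version is, if anything, slightly more explicit about the degree bookkeeping than the paper's.
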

\begin{proof}
	By using the fact that $((w_1,\delta_1),(w_2,\delta_2))$ is a Maurer-Cartan element of the graded Lie algebra
	and the graded Jacobi identity we have 
	\begin{align*}
		[w_1,[w_1,f^j]_{\mathrm{NR}}]_{\mathrm{NR}}&=[[w_1,w_1]_{\mathrm{NR}},f^j]-[w_1,[w_1,f^j]_{\mathrm{NR}}]_{\mathrm{NR}} \\
		&=2[[w_1,w_1]_{\mathrm{NR}},f^j]_{\mathrm{NR}}=0
	\end{align*}
	For second identity 
	\begin{align*}
		[w_1,[f^j,\delta_1]_{\mathrm{NR}}]_{\mathrm{NR}}&=[[w_1,f^j]_{\mathrm{NR}},\delta_1]_{\mathrm{NR}}-(-1)^{n-1}[f^j,[w_1,\delta_1]_{\mathrm{NR}}]_{\mathrm{NR}} \\
		&=[[w_1,f^j]_{\mathrm{NR}},\delta_1]_{\mathrm{NR}}.
	\end{align*}
	For the third one and by using the first identiy we have 
	\begin{align*}
		[w_2,[w_1,f^j]_{\mathrm{NR}}]_{\mathrm{NR}}+[w_1,[w_2,f^j]_{\mathrm{NR}}]_{\mathrm{NR}}=[[w_2,w_1]_{\mathrm{NR}},f^j]_{\mathrm{NR}}=0
	\end{align*}
	For the last one we just use the second identity. This complete the proof.
\end{proof}
Let's now prove the theorem \eqref{thm5.6}.
\begin{proof}
	For $\big((f^1,g^1),(f^2,g^2),...,(f^n,g^n) \big) \in \mathcal{C}_{C.L.D}^{n}(L,L)$ and for $2 \leq i \leq n$
	\small{ 
		\begin{align*}
			&\partial_{\mathrm{C.L.D.P}}^{n+1} \circ \partial_{\mathrm{C.L.D.P}}^{n} \Big((f^1,g^1),(f^2,g^2),\ldots,(f^n,g^n)\Big)=(-1)^{n-1} \Big(([w_1,f^1]_{\mathrm{NR}},-[w_1,g_{n-1}^1]_{\mathrm{NR}}-[f^1,\delta_1]_{\mathrm{NR}}),\ldots,\\
			&\underbrace{([w_2,f^{i-1}]_{\mathrm{NR}}+[w_1,f^i]_{\mathrm{NR}},-[w_2,g^{i-1}]_{\mathrm{NR}}-[w_1,g^i]_{\mathrm{NR}}-[f^{i-1},\delta_2]_{\mathrm{NR}}-[f^i,\delta_1]_{\mathrm{NR}})}_{2 \leq i \leq n},\ldots,
			([w_2,f^n]_{\mathrm{NR}},-[w_2,g^n]_{\mathrm{NR}}-[f^n,\delta_2]_{\mathrm{NR}})\Big) \\
			&=\Big(([w_1,[w_1,f^1]_{\mathrm{NR}}]_{\mathrm{NR}},-[w_1,-[w_1,g^1]_{\mathrm{NR}}-[f^1,\delta_1]_{\mathrm{NR}}]_{\mathrm{NR}}-[[w_1,f^1]_{\mathrm{NR}},\delta_1]_{\mathrm{NR}}),\\
			&([w_2,[w_1,f^1]_{\mathrm{NR}}]_{\mathrm{NR}}+[w_1,[w_2,f^1]_{\mathrm{NR}}+[w_1,f^2]_{\mathrm{NR}}]_{\mathrm{NR}},-[w_2,-[w_1,g^1]_{\mathrm{NR}}-[f^1,\delta_1]_{\mathrm{NR}}]_{\mathrm{NR}}\\
			&-[w_1,-[w_2,g^1]_{\mathrm{NR}}-[w_1,g^2]_{\mathrm{NR}}-[f^1,\delta_2]_{\mathrm{NR}}-[f^2,\delta_1]_{\mathrm{NR}}]_{\mathrm{NR}}-[[w_1,f^1]_{\mathrm{NR}},\delta_2]_{\mathrm{NR}}-[[w_2,f^1]_{\mathrm{NR}}+[w_1,f^2]_{\mathrm{NR}},\delta_1]_{\mathrm{NR}}),\ldots,\\
			&([w_2,[w_1,f^{n-1}]_{\mathrm{NR}}]_{\mathrm{NR}}+[w_1,[w_2,f^n]_{\mathrm{NR}}+[w_1,f^2]_{\mathrm{NR}}]_{\mathrm{NR}},-[w_2,-[w_1,g^{n-1}]_{\mathrm{NR}}-[f^n,\delta_1]_{\mathrm{NR}}]_{\mathrm{NR}}\\
			&-[w_1,-[w_2,g^{n-1}]_{\mathrm{NR}}-[w_1,g^n]_{\mathrm{NR}}-[f^{n-1},\delta_2]_{\mathrm{NR}}-[f^n,\delta_1]_{\mathrm{NR}}]_{\mathrm{NR}}-[[w_1,f^{n-1}]_{\mathrm{NR}},\delta_2]_{\mathrm{NR}}-[[w_2,f^{n-1}]_{\mathrm{NR}}+[w_1,f^n]_{\mathrm{NR}},\delta_1]_{\mathrm{NR}}),\\
			&([w_2,[w_2,f^n]_{\mathrm{NR}}]_{\mathrm{NR}},-[w_2,-[w_2,g^n]_{\mathrm{NR}}-[f^n,\delta_2]_{\mathrm{NR}}]_{\mathrm{NR}}-[[w_2,f^n]_{\mathrm{NR}},\delta_2]_{\mathrm{NR}}) \Big)
		\end{align*}
	}
	For the term $3 \leq i \leq n-1$ it is whriten as follow
	\small{
		\begin{align*}
			&,\ldots,\Big([w_2,[w_2,f^{i-2}]_{\mathrm{NR}}]_{\mathrm{NR}}+[w_2,[w_1,f^{i-1}]_{\mathrm{NR}}]_{\mathrm{NR}}+[w_1,[w_2,f^{i-1}]_{\mathrm{NR}}]_{\mathrm{NR}}+[w_1,[w_1,f^i]_{\mathrm{NR}}]_{\mathrm{NR}}, \\
			&-[w_2,-[w_2,g^{i-2}]_{\mathrm{NR}}-[w_1,g^{i-1}]_{\mathrm{NR}}-[f^{i-2},\delta_2]_{\mathrm{NR}}-[f^{i-1},\delta_1]_{\mathrm{NR}}]_{\mathrm{NR}}-[w_1,-[w_2;g^{i-1}]_{\mathrm{NR}}-[w_1,g^i]_{\mathrm{NR}}-[f^{i-1},\delta_2]_{\mathrm{NR}}-[f^i,\delta_1]_{\mathrm{NR}}]_{\mathrm{NR}}\\
			&-[[w_2,f^{i-2}]_{\mathrm{NR}}+[w_1,f^{i-1}]_{\mathrm{NR}},\delta_2]_{\mathrm{NR}}-[[w_2,f^{i-1}]_{\mathrm{NR}}+[w_1,f^i]_{\mathrm{NR}},\delta_1]_{\mathrm{NR}}\Big),\ldots, 
		\end{align*}
	}
	Now we comput all term, one by one, and using the graded Jacobi identity and proposition \eqref{prop5.7}. \\
	Let's start with the the first term
	\small{
		\begin{align*}
			&([w_1,[w_1,f^1]_{\mathrm{NR}}]_{\mathrm{NR}},-[w_1,-[w_1,g^1]_{\mathrm{NR}}-[f^1,\delta_1]_{\mathrm{NR}}]_{\mathrm{NR}}-[[w_1,f^1]_{\mathrm{NR}},\delta_1]_{\mathrm{NR}})=\\
			&=(0,[w_1,[w_1,g^1]_{\mathrm{NR}}]_{\mathrm{NR}}-[w_1,[f^1,\delta_1]_{\mathrm{NR}}]_{\mathrm{NR}}
			+[[w_1,f^1]_{\mathrm{NR}},\delta_1]_{\mathrm{NR}}) \\
			&=(0,[w_1,[f^1,\delta_1]_{\mathrm{NR}}]_{\mathrm{NR}}-[[w_1,f^1]_{\mathrm{NR}},\delta_1]_{\mathrm{NR}}) \\
			&=(0,0)
		\end{align*}
	}
	For the second term
	\begin{align*}
		&([w_2,[w_1,f^1]_{\mathrm{NR}}]_{\mathrm{NR}}+[w_1,[w_2,f^1]_{\mathrm{NR}}+[w_1,f^2]_{\mathrm{NR}}]_{\mathrm{NR}},-[w_2,-[w_1,g^1]_{\mathrm{NR}}-[f^1,\delta_1]_{\mathrm{NR}}]_{\mathrm{NR}}\\
		&-[w_1,-[w_2,g^1]_{\mathrm{NR}}-[w_1,g^2]_{\mathrm{NR}}-[f^1,\delta_2]_{\mathrm{NR}}-[f^2,\delta_1]_{\mathrm{NR}}]_{\mathrm{NR}}-[[w_1,f^1]_{\mathrm{NR}},\delta_2]_{\mathrm{NR}}-[[w_2,f^1]_{\mathrm{NR}}+[w_1,f^2]_{\mathrm{NR}},\delta_1]_{\mathrm{NR}}) \\
		&=(0,[w_2,[w_1,g^1]_{\mathrm{NR}}]_{\mathrm{NR}}+[w_2,[f^1,\delta_1]_{\mathrm{NR}}]_{\mathrm{NR}}
		+[w_1,[w_2,g^1]_{\mathrm{NR}}]_{\mathrm{NR}}+[w_1,[w_1,g^2]_{\mathrm{NR}}]_{\mathrm{NR}}+[w_1,[f^1,\delta_2]_{\mathrm{NR}}]_{\mathrm{NR}}+[w_1,[f^2,\delta_1]_{\mathrm{NR}}]_{\mathrm{NR}} \\
		&-[[w_1,f^1]_{\mathrm{NR}},\delta_2]_{\mathrm{NR}}-[[w_2,f^1]_{\mathrm{NR}},\delta_1]_{\mathrm{NR}}-[[w_1,f^2]_{\mathrm{NR}},\delta_1]_{\mathrm{NR}}) \\
		&=(0,[w_2,[f^1,\delta_1]_{\mathrm{NR}}]_{\mathrm{NR}}
		+[w_1,[f^1,\delta_2]_{\mathrm{NR}}]_{\mathrm{NR}}+[w_1,[f^2,\delta_1]_{\mathrm{NR}}]_{\mathrm{NR}} -[[w_1,f^1]_{\mathrm{NR}},\delta_2]_{\mathrm{NR}}-[[w_2,f^1]_{\mathrm{NR}},\delta_1]_{\mathrm{NR}}-[[w_1,f^2]_{\mathrm{NR}},\delta_1]_{\mathrm{NR}}) \\
		&=(0,0)
	\end{align*}
	For term $3 \leq i \leq n-1$ 
	\begin{align*}
		&\Big([w_2,[w_2,f^{i-2}]_{\mathrm{NR}}]_{\mathrm{NR}}+[w_2,[w_1,f^{i-1}]_{\mathrm{NR}}]_{\mathrm{NR}}+[w_1,[w_2,f^{i-1}]_{\mathrm{NR}}]_{\mathrm{NR}}+[w_1,[w_1,f^i]_{\mathrm{NR}}]_{\mathrm{NR}}, \\
		&-[w_2,-[w_2,g^{i-2}]_{\mathrm{NR}}-[w_1,g^{i-1}]_{\mathrm{NR}}-[f^{i-2},\delta_2]_{\mathrm{NR}}-[f^{i-1},\delta_1]_{\mathrm{NR}}]_{\mathrm{NR}}-[w_1,-[w_2,g^{i-1}]_{\mathrm{NR}}-[w_1,g^i]_{\mathrm{NR}}-[f^{i-1},\delta_2]_{\mathrm{NR}}-[f^i,\delta_1]_{\mathrm{NR}}]_{\mathrm{NR}}\\
		&-[[w_2,f^{i-2}]_{\mathrm{NR}}+[w_1,f^{i-1}]_{\mathrm{NR}},\delta_2]_{\mathrm{NR}}-[[w_2,f^{i-1}]_{\mathrm{NR}}+[w_1,f^i]_{\mathrm{NR}},\delta_1]_{\mathrm{NR}}\Big)=\\
		&=(0,[w_2,[w_2,g^{i-2}]_{\mathrm{NR}}]_{\mathrm{NR}}+[w_2,[w_1,g^{i-1}]_{\mathrm{NR}}]_{\mathrm{NR}}+[w_2,[f^{i-2},\delta_2]_{\mathrm{NR}}]_{\mathrm{NR}}+[w_2,[f^{i-1},\delta_1]_{\mathrm{NR}}]_{\mathrm{NR}}\\
		&+[w_1,[w_2,g^{i-1}]_{\mathrm{NR}}]_{\mathrm{NR}}+[w_1,[w_1,g^i]_{\mathrm{NR}}]_{\mathrm{NR}}+[w_1,[f^{i-1},\delta_2]_{\mathrm{NR}}]_{\mathrm{NR}}+[w_1,[f^i,\delta_1]_{\mathrm{NR}}]_{\mathrm{NR}} \\
		&-[[w_2,f^{i-2}]_{\mathrm{NR}},\delta_2]_{\mathrm{NR}}-[w_1,f^{i-1}]_{\mathrm{NR}},\delta_2]_{\mathrm{NR}}-[[w_2,f^{i-1}]_{\mathrm{NR}},\delta_1]_{\mathrm{NR}}-[w_1,f^i]_{\mathrm{NR}},\delta_1]_{\mathrm{NR}}) \\
		&=(0,0)
	\end{align*}
	Simliarly for the rest of terms, then we obtain
	\begin{equation*}
		\partial_{\mathrm{C.L.D.P}}^{n+1} \circ \partial_{\mathrm{C.L.D.P}}^{n} \Big((f^1,g^1),(f^2,g^2),\ldots,(f^n,g^n)\Big)=((0,0),(0,0),\ldots,(0,0))
	\end{equation*}
	This complete the proof.
\end{proof}
By the adjoint representation, we obtain a cochains complex $(\mathcal{C}^{\star}_{C.L.D}(L;L),\partial)$ we denoted the set of closed $n$-cochains by $\mathbb{Z}^n_{C.L.D}(L;L)$ and the set of exact $n$-cochains by $\mathbb{Z}^n_{C.L.D}(L;L)$. We define the corresponding cohomology group by 
\begin{equation*}
	\mathbb{H}^n_{C.L.D}(L;L)=\mathbb{Z}^n_{C.L.D}(L;L) / \mathbb{Z}^n_{C.L.D}(L;L)
\end{equation*}

\noindent {\bf Acknowledgment:}
The authors would like to thank the referee for valuable comments and suggestions on this article.

\end{document}